\documentclass[reqno]{amsart}
\usepackage[utf8]{inputenc}
\usepackage{xcolor,fullpage,hyperref,cleveref,enumitem,enumitem}
\usepackage{tikz,amsmath,amssymb,color,mathtools}
\usepackage{amsthm}
\usetikzlibrary{calc}
\usepackage{float}
\usepackage{pgf}
\usepackage[backrefs]{amsrefs}

\usepackage[normalem]{ulem}

\newcommand{\X}{\texttt{X}}
\newcommand{\Pref}{\mathrm{Pref}}

\newcommand{\Sym}{\mathfrak{S}}
 
\newcommand{\Des}{\mathrm{Des}}
\newcommand{\DB}{\mathrm{DBottom}}

\newcommand{\Cat}{\mathrm{Cat}}

\theoremstyle{definition} 
\newtheorem{theorem}{Theorem}[section]
\newtheorem{corollary}{Corollary}[theorem]
\newtheorem{proposition}[theorem]{Proposition}

 \newtheorem{example}{Example}
 
\newtheorem{lemma}[theorem]{Lemma}
\newtheorem{claim}{Claim}

\theoremstyle{definition}
\newtheorem{definition}{Definition}[section]
\newcommand{\Lucky}{\mathsf{Lucky}} 
\newcommand{\lucky}{\mathsf{lucky}}  
\newcommand{\LPF}{\mathsf{LuckyPF}}

\theoremstyle{remark}

\newcommand{\PF}{\mathrm{PF}}

\newcommand{\out}{\mathcal{O}}

\newcommand{\seqnum}[1]{\href{https://oeis.org/#1}{\rm  \underline{#1}}}
\usepackage{thmtools} 
\usepackage{thm-restate}

\title{Parking functions with a fixed set of lucky cars}

\author[Harris]{Pamela E. Harris}
\address[P.~E.~Harris]{Department of Mathematical Sciences, University of Wisconsin-Milwaukee, Milwaukee, WI 53211}
\email{\textcolor{blue}{\href{mailto:peharris@uwm.edu}{peharris@uwm.edu}}}

\author[Martinez]{Lucy Martinez}
\address[L.~Martinez]{Department of Mathematics, Rutgers University, Piscataway, NJ 08854}
\email{\textcolor{blue}{\href{mailto:lucy.martinez@rutgers.edu}{lucy.martinez@rutgers.edu}}}

\begin{document}
\begin{abstract}
    In a parking function, a lucky car is a car that parks in its preferred parking spot and the parking outcome is the permutation encoding the order in which the cars park on the street. 
    We give a characterization for the set of parking outcomes arising from parking functions with a fixed set of lucky cars. This characterization involves the descent bottom set of a permutation, and we use the characterization to give a formula for the number of parking functions with a fixed set of lucky cars. 
    Our work includes the cases where the number of cars is equal to the number of parking spots, and where there are more spots than cars. 
    We also give product formulas for the number of weakly increasing parking functions having a fixed set of lucky cars, and when the number of cars equals the number of spots this is a product of Catalan numbers. 
\end{abstract}
\maketitle

\section{Introduction}
Throughout we let $n\in\mathbb{N}\coloneq\{1,2,3,\ldots\}$ and  $[n]\coloneq\{1,2,\ldots,n\}$. The set of parking functions of length~$n$ consists of all $n$-tuples $\alpha=(a_1,a_2,\ldots,a_n)\in[n]^n$ whose rearrangement into weakly increasing order $\alpha^\uparrow=(b_1,b_2,\ldots,b_n)$ satisfies $b_i\leq i$ for all $i\in[n]$.
The use of the name ``parking function'' arises as these $n$-tuples encode the parking preferences of $n$ cars attempting to park on a one-way street consisting of $n$ parking spots. The cars enter the street in sequential order, from car $1$ to car $n$, with car $i$ parking in the first spot it encounters at or past its preference $a_i$. 
If $\alpha=(a_1,a_2,\ldots,a_n)$ denotes the preference list for all cars and all cars are able to park in the $n$ spots on the street, then $\alpha$ is a \textit{parking function of length} $n$, and the order in which the cars park on the street is called the \textit{outcome} of $\alpha$ and is denoted $\out(\alpha)$. More precisely, if $\Sym_n$ denotes the set of permutations of $[n]$ written in one-line notation, then the outcome of $\alpha$~is
\[\out(\alpha)=\pi_1\pi_2\cdots\pi_n,\]
where $\pi_i$ denotes that car $\pi_i$ parked in spot $i$ on the street. We let $\PF_n$ denote the set of parking functions of  length $n$ and Konheim and Weiss established $|\PF_n|=(n+1)^{n-1}$, see \cite{konheim1966occupancy}.

In a parking function, a car is said to be \textit{lucky} if it parks in its preferred parking spot. 
For example, the parking function $(1,1,1,1)$ has outcome $\out((1,1,1,1))=1234$
and only the first car is lucky. If the entries in a parking function are all distinct, i.e., $\alpha=(\pi_1,\pi_2,\ldots,\pi_n)$, where $\pi=\pi_1\pi_2\cdots\pi_n$ is a permutation of $[n]$, then $\out(\alpha)=\pi^{-1}$ and all cars are lucky. 
Gessel and Seo gave a product formula for the generating function for the number of parking functions based on the number of lucky cars \cite{GesselandSeo} which is given by 
\begin{equation}
    \label{eq:GesselSeo}
  L_n(q)=\sum_{\alpha \in \PF{n} }q^{\lucky(\alpha)} = q\prod_{i=1}^{n-1} (i+(n-i+1)q).
\end{equation}
Gessel and Seo established Equation \eqref{eq:GesselSeo} via ordered set partitions and generating function techniques. Shin gave a bijective proof of Gessel and Seo's result by constructing a nonrecursive bijection between forests and
parking functions \cite{Heesung}. 

Other work related to the lucky statistic includes that of Harris, Kretschmann, and Mart\'inez Mori who showed that the number of parking preference lists, elements of $[n]^n$, with exactly $n-1$ lucky cars
is the same as the
total number of comparisons performed by the \texttt{Quicksort} algorithm given all possible orderings of an array of size $n$ \cite{harris2023lucky}. 
Also Aguillon,
Alvarenga, Harris, Kotapati, Martínez Mori, Monroe, Saylor, Tieu, and  Williams II~\cite{displacement_hanoi} established that the set of parking functions with exactly one unlucky car parking one spot away from its preference is in bijection with the set of ideal states in the tower of Hanoi game \cite[Theorem 1]{displacement_hanoi}.
Colmenarejo, Dawkins, Elder, Harris, Harry, Kara, Smith, and Tenner \cite{colmenarejo2024luckydisplacementstatisticsstirling} studied Stirling permutations as parking functions and considered the questions of what subsets of cars are lucky, and how many Stirling permutations have certain sets of lucky cars. 
They establish that there are $(n-1)!$ Stirling permutations with only the first car being lucky, and that there are Catalan many Stirling permutations with a  maximal number of lucky cars. 
They also study cases in which a small number of cars are lucky, however general results for counting Stirling permutations with a fixed set of lucky cars remains an open problem.
These examples illustrate that the lucky statistic of parking functions give rise to interesting connections with other well-studied  families of combinatorial objects.

We study parking functions with a fixed set of lucky cars. To make our approach precise, we let $\Lucky(\alpha)$ denote the set of lucky cars of $\alpha\in\PF_n$. 
For example, $\Lucky((1,1,1,1))=\{1\}$, while $\Lucky(\alpha)=[n]$ whenever $\alpha=(\pi_1,\pi_2,\ldots,\pi_n)$ and $\pi=\pi_1\pi_2\cdots\pi_n\in\Sym_n$.
As expected, the first car in any parking function is always lucky, so $1\in\Lucky(\alpha)$ for all $\alpha\in \PF_n$. 
For any subset $I\subseteq[n]$, we define 
\[\LPF_n(I)\coloneq\{\alpha\in \PF_n: \Lucky(\alpha)=I\},\]
which is the set of parking functions of length $n$ with lucky cars being the cars in the set $I$ and unlucky cars being the cars in the set $[n]\setminus I$.
For $I\subseteq[n]$, if $\LPF_n(I)\neq
\emptyset$, then $I$ is said to be a \textit{lucky set} of $\PF_n$. 
For example, as the first car is always a lucky car regardless of the preference of the other cars, a set $I$ not containing $1$ can never be a lucky set of $\PF_n$ for any $n\geq 1$. 
We stress that  lucky sets contain the lucky cars rather than the position at which the lucky cars park.

Among our contributions,
in Theorem~\ref{thm:lucky sets}, we characterize the lucky sets of $\PF_n$ by proving that a subset $I\subseteq[n]$ is a lucky set of $\PF_n$ if and only if $1\in I$. 
An immediate consequence is that there are $2^{n-1}$ distinct lucky sets of $\PF_n$ (Corollary~\ref{cor:number lucky sets}). 
Curiously, not every permutation can be an outcome of a parking function with a fixed set of lucky cars. 
For example, the  permutation $\pi=312\in \Sym_3$ cannot be the outcome of a parking function $\alpha\in\PF_3$ with lucky set $\{1\}$. 
This is because car $3=\pi_1$ parks in spot $1$, however when car $3$ enters to park, spot $1$ is unoccupied. 
This means that parking in spot $1$ would make car $3$ a lucky car, but we assumed that the set of lucky cars was $\{1\}$, reaching a contradiction.
Based on this observation, a natural question to ask is: 
\emph{For a fixed subset  $I\subseteq[n]$ (containing $1$), what permutations in $\Sym_n$ arise as the outcome of parking functions with lucky set $I$?}
We answer this question fully via the following characterization.

\setcounter{section}{2} 
\setcounter{theorem}{2}
\begin{theorem}
Fix a lucky set $I\subseteq[n]$ of $\PF_n$. 
Then $\pi=\pi_1\pi_2\cdots\pi_n\in \Sym_n$ is the outcome $\alpha\in\LPF_n(I)$
if and only if 
\begin{enumerate}
    \item $\pi_1\in I$,
    \item if $\pi_i\in I$ and $\pi_{i+1}\notin I$, then $\pi_i<\pi_{i+1}$, and 
    \item if $\pi_{i-1}>\pi_{i}$, then $\pi_{i}\in I$.
\end{enumerate}
\end{theorem}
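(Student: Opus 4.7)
The plan is to prove the two directions separately, with the forward direction following directly from basic parking dynamics and the backward direction from an explicit construction of preferences.

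For the forward direction, I would assume $\pi = \out(\alpha)$ for some $\alpha \in \LPF_n(I)$ and verify each of the three conditions. Condition (1) is immediate: the car occupying spot $1$ must have preference $1$ (cars never drive backwards), so $\pi_1$ is automatically lucky, hence in $I$. For condition (3), I would argue that at any descent $\pi_{i-1} > \pi_i$ the car $\pi_i$ has a smaller label than $\pi_{i-1}$ and therefore enters the street first; at the moment $\pi_i$ arrives, spot $i-1$ is still empty, so its preference cannot be less than $i$, forcing its preference to equal $i$ and making it lucky. Condition (2) is handled analogously: if $\pi_{i+1}$ is unlucky then its preference is at most $i$, and the fact that it skipped spot $i$ means that spot was already occupied by some earlier car, necessarily $\pi_i$, giving $\pi_i < \pi_{i+1}$.

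For the backward direction, given $\pi$ satisfying (1)--(3), I would exhibit an explicit preference list $\alpha = (a_1,\ldots,a_n)$ by setting, for each car $c \in [n]$ with $i := \pi^{-1}(c)$,
\[
a_c = \begin{cases} i & \text{if } c \in I, \\ i - 1 & \text{if } c \notin I. \end{cases}
\]
Condition (1) guarantees $a_c \geq 1$, since $i = 1$ would force $c = \pi_1 \in I$. I would then prove by strong induction on $c$ that car $c$ parks in spot $i = \pi^{-1}(c)$: spot $i$ is always empty when $c$ arrives because $\pi$ is a permutation and no earlier car has target spot $i$, which handles $c \in I$ directly. For $c \notin I$, I additionally need spot $i-1$ to be occupied so that car $c$ advances there; this is precisely where condition (3) enters, since $\pi_i \notin I$ yields $\pi_{i-1} < \pi_i = c$ and the inductive hypothesis places car $\pi_{i-1}$ in spot $i-1$.

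The main obstacle is identifying the right construction. The naive attempt of preferring the parking spot makes every car lucky (and so only works when $I = [n]$); the insight is that shifting the preference one spot to the left for each unlucky car suffices, and that its correctness rests exactly on condition (3). Once the outcome is shown to equal $\pi$, the lucky cars are by construction the $c \in [n]$ with $a_c = \pi^{-1}(c)$, namely the elements of $I$. Condition (2), while stated in the theorem, is actually a consequence of condition (3) and plays no independent role in the construction.
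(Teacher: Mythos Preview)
Your proof is correct and follows essentially the same approach as the paper: both directions are argued the same way, and your explicit construction for the backward direction (set $a_c = \pi^{-1}(c)$ for lucky $c$ and $a_c = \pi^{-1}(c)-1$ for unlucky $c$) is exactly the paper's construction once its notation $a_{\pi_i} = \pi_{\pi_{i-1}}^{-1}$ is unwound. Your verification by strong induction on the car label is somewhat more direct than the paper's, which instead checks separately that $\alpha$ is a parking function, that $\Lucky(\alpha)=I$, and that $\out(\alpha)=\pi$; and your closing observation that condition~(2) is already implied by condition~(3) is correct and worth noting.
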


The proof of Theorem \ref{thm:characterizing outcomes} involves descent bottom sets of permutations (Definition \ref{def:descent bottoms}), which are the values $\pi_{i+1}$, whenever $\pi_i>\pi_{i+1}$. 
Any  descent bottom in an outcome of a parking function would imply that car $\pi_{i+1}$ is lucky since it parked in spot $i+1$ having found spot $i$ vacant, as  car $\pi_i$ had not yet entered the street to park.
 We let $\out_n(I)$ denote the set of outcome permutations for parking functions with lucky set $I$.
  If $I=\{1\}$, then $\pi=123\cdots n$ is the only possible outcome of a parking function having only car~$1$ being lucky. Hence $|\out_n(I)|=1$. However, even though Theorem \ref{thm:characterizing outcomes} gives a characterization for the set $\out_n(I)$, determining a formula for the cardinality of the set $\out_n(I)$ has proven difficult for general lucky set~$I$. 
 We give a result for when the first $k$ cars are lucky.
 \begin{theorem}\label{thm:counting outcomes with first k cars lucky}
     Let $I=\{1,2,3,\ldots,k\}\subseteq[n]$ be a lucky set of $\PF_n$. Then 
     \[|\out_n(I)|=\sum_{J=\{j_1=1,j_2,j_3,\ldots,j_k\}\subseteq[n]}k!\binom{n-k}{j_2-j_1-1,j_3-j_2-1,\ldots,j_{k}-j_{k-1}-1,n-j_{k}}.\]
 \end{theorem}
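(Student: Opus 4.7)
The plan is to apply Theorem \ref{thm:characterizing outcomes} with $I = \{1,2,\ldots,k\}$ and stratify $\out_n(I)$ by the set $J \subseteq [n]$ of positions at which the outcome $\pi$ takes a value in $I$. The key simplification for this particular $I$ is that condition (2) of Theorem \ref{thm:characterizing outcomes} becomes vacuous: every value in $I$ is at most $k$ and every value in $[n]\setminus I$ is at least $k+1$, so any lucky-to-unlucky transition is automatically an ascent. Consequently, $\pi$ lies in $\out_n(I)$ precisely when $\pi_1 \leq k$ and whenever $\pi_i > k$ with $i \geq 2$ one has $\pi_{i-1} < \pi_i$ (the contrapositive of (3)).

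Now fix $J = \{j_1 < j_2 < \cdots < j_k\}$, the set of positions of $\pi$ carrying values in $I$; condition (1) forces $j_1 = 1$. The complement $[n] \setminus J$ decomposes into $k$ consecutive (possibly empty) blocks of sizes $j_2 - j_1 - 1,\, j_3 - j_2 - 1,\, \ldots,\, j_k - j_{k-1} - 1,\, n - j_k$, each sitting between successive elements of $J$ or after $j_k$. The central claim is that building a valid $\pi$ with this $J$ decomposes into two independent choices. First, arrange the values $\{1,\ldots,k\}$ among the positions of $J$ in any of $k!$ orders; condition (3) imposes nothing here since every position in $J$ already carries a value in $I$. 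Second, distribute the $n-k$ values $\{k+1,\ldots,n\}$ among the blocks according to a multinomial coefficient $\binom{n-k}{j_2-j_1-1,\,\ldots,\,j_k-j_{k-1}-1,\,n-j_k}$, after which each block's values must be written in increasing order (forced by the contrapositive of (3) at every non-initial unlucky position of its block). The transition from the lucky position preceding a block into the first position of that block is automatically ascending, and the transition from the last position of a block back into a lucky position carries no constraint.

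Multiplying yields the summand $k!\binom{n-k}{j_2-j_1-1,\,\ldots,\,j_k-j_{k-1}-1,\,n-j_k}$, and summing over all admissible $J$ with $j_1 = 1$ produces the formula. The main subtlety — more a careful bookkeeping point than a real obstacle — is verifying that no hidden constraint couples distinct blocks or ties the lucky-value arrangement to the block contents. This follows because condition (3) is only triggered at positions with unlucky values, and for such positions the preceding value is either an earlier entry of the same block (handled by within-block monotonicity) or a lucky value $\leq k$ (automatically dominated by the unlucky value $\geq k+1$). With this independence verified, the counting is complete.
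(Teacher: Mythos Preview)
Your proposal is correct and follows essentially the same approach as the paper: both stratify $\out_n(I)$ by the set $J$ of positions occupied by the lucky values, observe that $j_1=1$ is forced, arrange $\{1,\ldots,k\}$ freely among $J$ in $k!$ ways, and distribute $\{k+1,\ldots,n\}$ into the $k$ gaps via the multinomial coefficient with each block forced into increasing order. Your version is slightly more explicit in invoking Theorem~\ref{thm:characterizing outcomes} and noting that condition~(2) is vacuous for this particular $I$, whereas the paper argues the same points directly in parking language, but the structure and content of the two arguments are the same.
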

The proof of \Cref{thm:counting outcomes with first k cars lucky} relies on  specifying the spots that are occupied by the lucky cars, which must always include the first spot, and noting that those cars can park in those spots in any order. 

In our main result, we utilize the set of outcomes $\out_n(I)$ to give
a formula for the number of parking functions of length $n$ with lucky set $I$. 

\setcounter{theorem}{5}
\begin{theorem}
Fix a lucky set $I\subseteq[n]$ of $\PF_n$. Let $\out_n(I)$ denote the set of outcome permutations for parking functions with lucky set $I$.
Then the number of parking functions with lucky set $I$ is
\begin{align}
|\LPF_n(I)|=
\sum_{\pi=\pi_1\pi_2\cdots\pi_n\in \out_n(I)}
\left(\prod_{\pi_j\notin I} \ell (\pi_j)\right),
\label{eq:main counting sum}
\end{align}
 where, for each $i\in[n]$, 
 $\ell(\pi_i)$
    denotes the length of the longest subsequence $\pi_j\pi_{j+1}\cdots\pi_{i-1}$ with $\pi_t<\pi_i$ for all $j\leq t\leq i-1$.
\end{theorem}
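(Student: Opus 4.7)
The plan is to partition $\LPF_n(I)$ by the parking outcome: for each $\pi \in \out_n(I)$ I count the parking functions $\alpha$ with $\out(\alpha) = \pi$ and $\Lucky(\alpha) = I$, then sum. With Theorem~\ref{thm:characterizing outcomes} already describing $\out_n(I)$, the task reduces to counting admissible preferences once an outcome is fixed.

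Fix $\pi \in \out_n(I)$. If $\pi_j \in I$, then car $\pi_j$ parks in spot $j$ without displacement, which forces the preference $a_{\pi_j} = j$; lucky cars contribute no freedom. If $\pi_i \notin I$, I would use the fact that cars enter in order of their labels, so at the moment car $\pi_i$ arrives the occupied spots are exactly $\{j : \pi_j < \pi_i\}$. For $\pi_i$ to land in spot $i$ while being unlucky, the preference $a_{\pi_i}$ must lie in $\{1, \ldots, i-1\}$ and every spot $a_{\pi_i}, a_{\pi_i}+1, \ldots, i-1$ must be occupied at that moment, i.e., $\pi_t < \pi_i$ for every $t$ in this range. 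Hence $a_{\pi_i}$ may be any of the $\ell(\pi_i)$ indices $i - \ell(\pi_i), i - \ell(\pi_i)+1, \ldots, i-1$ comprising the maximal run immediately preceding $i$ whose values are less than $\pi_i$. Condition~(3) of Theorem~\ref{thm:characterizing outcomes} guarantees $\pi_{i-1} < \pi_i$ for every unlucky $\pi_i$, so $\ell(\pi_i) \geq 1$ and the count is nontrivial.

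Next I would verify that every such assignment of preferences genuinely produces a parking function with outcome $\pi$ and lucky set exactly $I$. A short induction on the car label $c$ does the job: assuming cars $1, \ldots, c-1$ have parked in spots $\{j : \pi_j < c\}$, one checks that the chosen preference sends car $c = \pi_i$ to spot $i$ and makes it lucky iff $c \in I$. Since varying one unlucky car's preference within its admissible range leaves every car's final spot unchanged, the choices across unlucky cars are independent, so the number of parking functions with outcome $\pi$ and lucky set $I$ is $\prod_{\pi_j \notin I} \ell(\pi_j)$. Summing over $\pi \in \out_n(I)$ yields~\eqref{eq:main counting sum}.

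The main obstacle is the identification in the second paragraph—arguing cleanly that the admissible preferences for an unlucky $\pi_i$ are exactly the $\ell(\pi_i)$ indices of the maximal run immediately preceding $i$ with values less than $\pi_i$, neither more nor fewer. Once that local count is in place, the partition-by-outcome framework makes the remaining inductive consistency check a straightforward piece of bookkeeping about the label-order dynamics of parking.
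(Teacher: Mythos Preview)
Your proposal is correct and follows essentially the same route as the paper: partition $\LPF_n(I)$ by outcome, fix the preference of each lucky car to its parking spot, identify the admissible preferences of each unlucky car $\pi_i$ with the $\ell(\pi_i)$ spots in the maximal run immediately to its left, and sum over $\pi\in\out_n(I)$. The paper isolates your second and third paragraphs as a separate lemma (Lemma~\ref{lem:possible prefs given an outcome}) and then cites it, but the underlying argument is the same.
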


The proof of Theorem \ref{thm:count pfs with fixed lucky set} relies on the characterization of the permutations that are outcomes of parking functions with lucky set $I$ (as described in \Cref{thm:characterizing outcomes}), and
an argument often referred to as ``counting through permutations''. This technique allows one to count the number of parking functions with a given parking outcome. For other work that uses this technique for counting parking functions we refer the reader to  \cite{franks2023counting,colmenarejo2021counting,lintervalrational,Spiro}. Moreover, taking \Cref{eq:main counting sum} as a sum over lucky sets of fixed cardinality $k$ yields the coefficient of $q^k$ in \Cref{eq:GesselSeo}. That is, for a fixed value $1\leq k \leq n$,  we have that 
\[[q^k]\left(q\prod_{i=1}^{n-1} (i+(n-i+1)q)\right)=\sum_{I\subseteq[n],1\in I, |I|=k}\left(\sum_{\pi=\pi_1\pi_2\cdots\pi_n\in \out_n(I)}
\left(\prod_{\pi_j\notin I} \ell (\pi_j)\right)\right).\]

We also consider the subset $\PF_n^\uparrow$ of weakly increasing parking functions.
The only possible outcome of a weakly increasing parking function is the identity permutation $\pi=123\cdots n$. Hence, restricting to the set $\PF_n^\uparrow$ reduces  
 \Cref{eq:main counting sum} to just the product, which we show is a product of Catalan numbers \cite[\href{https://oeis.org/A000108}{A000108}]{OEIS}. 
 For $n\geq 1$, the $n$th Catalan number is $\Cat_n=\frac{1}{n+1}\binom{2n}{n}$. 
We can now state the result.
\setcounter{theorem}{7}
\begin{theorem}
Fix a lucky set $I=\{i_1,i_2,i_3,\ldots,i_k\}\subseteq[n]$ of $\PF_n^{\uparrow}$ and assume $1=i_1<i_2<\cdots<i_k\leq n$.
If $\LPF_n^{\uparrow}(I)$ denotes the set of weakly increasing parking functions in $\PF_n^\uparrow$ with lucky set $I$, then 
\begin{align}
|\LPF_n^{\uparrow}(I)|=\prod_{j=1}^k\Cat_{x_j},
\end{align}    
where 
$x_{j}=i_{j+1}-i_j-1$ for each $j\in [k-1]$ and $x_k=n-i_k$.
\end{theorem}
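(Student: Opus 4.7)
The plan is to exploit the rigidity of weakly increasing parking functions and decompose the count into independent block contributions, one per lucky car.

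\textbf{Step 1 (Identity outcome and characterization of luckiness).} I first observe that if $\alpha=(a_1,\ldots,a_n)\in\PF_n^\uparrow$, then $\out(\alpha)=12\cdots n$: by induction on $i$, spots $1,\ldots,i-1$ are occupied before car $i$ enters, and since $a_i\le i$, car $i$ lands in spot $i$. Consequently, car $i$ is lucky if and only if $a_i=i$, so $\alpha\in\LPF_n^{\uparrow}(I)$ if and only if $a_{i_j}=i_j$ for every $j\in[k]$ while $a_m\le m-1$ for every $m\in[n]\setminus I$.

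\textbf{Step 2 (Block decomposition).} I partition $[n]$ into $B_j=[i_j,i_{j+1}-1]$ for $j\in[k-1]$ and $B_k=[i_k,n]$; each block has size $x_j+1$ and starts at the lucky position $i_j$. Weak monotonicity combined with the fixed value $a_{i_j}=i_j$ forces the interior entries of $B_j$ to satisfy $i_j\le a_{i_j+1}\le\cdots\le a_{i_j+x_j}$, and non-luckiness tightens this to $a_{i_j+t}\le i_j+t-1$ for each $t\in[x_j]$. The block choices are independent: the boundary value $a_{i_j}=i_j$ is prescribed by $I$, and any admissible interior entry of $B_j$ is at most $i_j+x_j-1\le i_{j+1}-1<i_{j+1}=a_{i_{j+1}}$, so the weak monotonicity at the junction between $B_j$ and $B_{j+1}$ is automatic.

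\textbf{Step 3 (Catalan identification).} For the $j$-th block I set $c_t=a_{i_j+t}-i_j+1$ for $t\in[x_j]$. The conditions from Step 2 translate to $1\le c_1\le c_2\le\cdots\le c_{x_j}$ with $c_t\le t$, which is precisely the definition of a weakly increasing parking function of length $x_j$. It is classical that $|\PF_m^\uparrow|=\Cat_m$ (via the bijection with Dyck paths of semilength $m$). Multiplying the contributions across the $k$ blocks yields $|\LPF_n^{\uparrow}(I)|=\prod_{j=1}^{k}\Cat_{x_j}$.

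The main obstacle is the decoupling argument in Step 2, namely verifying that fixing the lucky entries exactly at their indices not only matches the constraints from both adjacent blocks but also leaves no hidden cross-block restriction. Once that is settled, Step 3 is an affine change of variables plus a classical enumeration, and the product formula follows immediately.
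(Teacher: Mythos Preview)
Your proof is correct and follows essentially the same approach as the paper: observe that weakly increasing parking functions have identity outcome so luckiness reduces to $a_i=i$, split $[n]$ into blocks between consecutive lucky indices, and recognize each block's interior as a shifted weakly increasing parking function counted by a Catalan number. Your treatment is slightly more explicit about the independence of the blocks and the affine change of variables, but the structure and key ideas coincide with the paper's argument.
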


In Section \ref{sec:more spots than cars}, we generalize all of the aforementioned results to the set of $(m,n)$-parking functions, which are parking functions with $m$ cars and $n$ parking spots with $m\leq n$. 
We characterize the lucky sets (\Cref{thm:lucky sets of mn}) and characterize the outcomes of $(m,n)$-parking functions with a fixed set of lucky cars (\Cref{thm:characterizing outcomes in mn}). 
We also give a formula for the number of outcomes for $m$ cars parking in $n$ spots with the first $k$ cars being lucky (Theorem \ref{thm:number of outcomes with first k cars lucky in mn}).
For $n\geq m$, 
we give a formula for the number of $(m,n)$-parking functions  a fixed set of lucky cars (\Cref{thm:count pfs with fixed lucky set - mn version}). 
We also give a product formula for the number of weakly increasing $(m,n)$-parking functions having lucky set $I$ (\Cref{thm:weakly increasing pfs with a fixed set of lucky cars - mn version}). 
We conclude with Section~\ref{sec:future}, where we state a few open problems for further study.

\setcounter{section}{1}
\section{Same number of cars as spots}\label{sec:background}
In this section, we begin by describing all of the possible lucky sets. We then  characterize the types of permutations that are outcomes of parking functions with a fixed set of lucky cars, and then we give a formula for the number of parking functions with a fixed set of lucky cars.

\subsection{Lucky sets}
We begin by establishing necessary and sufficient conditions on the set $I\subseteq[n]$ so that it is a lucky set of $\PF_n$ i.e., it is a set for which there is a parking function with $I$ being the cars that are lucky and $[n]\setminus I$ is the set of unlucky cars.

\begin{theorem}\label{thm:lucky sets}
A subset $I\subseteq[n]$ is a lucky set of $\PF_n$ if and only if $1\in I$.
\end{theorem}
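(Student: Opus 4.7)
The plan is to prove both implications separately, with the forward direction being immediate and the backward direction handled by an explicit construction.

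For the forward direction, I would argue that whenever car $1$ enters the street, every spot is unoccupied, so car $1$ necessarily parks in spot $a_1$, which is its preference. Hence $1 \in \Lucky(\alpha)$ for every $\alpha \in \PF_n$, and in particular if $I$ is a lucky set realized by some $\alpha$ then $1 \in I$.

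For the backward direction, I would exhibit a specific parking function with lucky set equal to $I$. Given a subset $I \subseteq [n]$ with $1 \in I$, define $\alpha = (a_1, a_2, \ldots, a_n)$ by setting $a_j = j$ if $j \in I$ and $a_j = 1$ if $j \notin I$. The key structural claim is that after cars $1, 2, \ldots, j$ have attempted to park, the occupied spots are exactly $\{1, 2, \ldots, j\}$; this I would verify by induction on $j$. Indeed, if $j \in I$ then car $j$ prefers spot $j$, which is empty by the inductive hypothesis, so it parks there; if $j \notin I$ then car $j$ prefers spot $1$, finds spots $1, 2, \ldots, j-1$ all full, and so parks in spot $j$.

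Once this claim is established, the conclusion is immediate: $\alpha$ is a parking function (all $n$ cars park in $n$ distinct spots), every car $j \in I$ parks in spot $j = a_j$ and is therefore lucky, while every car $j \notin I$ parks in spot $j$ despite preferring spot $1$, and since $1 \in I$ forces $j \neq 1$, that car is unlucky. Therefore $\Lucky(\alpha) = I$, so $I$ is a lucky set. There is no genuine obstacle here; the only thing that requires care is the inductive bookkeeping showing the occupied set grows as $\{1, \ldots, j\}$, which relies on the fact that cars outside $I$ are routed to spot $1$ and then displaced to the next available spot in sequence.
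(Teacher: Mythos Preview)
Your proposal is correct and follows essentially the same strategy as the paper: the forward direction is immediate since car~$1$ always parks in its preferred spot, and the backward direction is handled by constructing an explicit parking function in which every car $j$ parks in spot $j$, with lucky cars preferring $j$ and unlucky cars preferring something strictly smaller. The only difference is the specific construction: you set $a_j=1$ for every $j\notin I$, whereas the paper sets $a_j=i_\ell$ where $i_\ell$ is the largest element of $I$ below $j$; the paper's choice yields a weakly increasing tuple (so the parking-function property follows from the inequality characterization and the construction is reused later for $\PF_n^\uparrow$), while yours is simpler but requires the small inductive argument you sketch.
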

\begin{proof}
    $(\Rightarrow)$ If $I$ is a lucky set of $\PF_n$, then there exists a parking function $\alpha\in\PF_n$ such that $\Lucky(\alpha)=I$. 
    As the first car is always a lucky car for any $\alpha\in \PF_n$, this implies that $1\in I$.

    \noindent $(\Leftarrow)$ Assume $I=\{i_1=1,i_2,i_3,\ldots,i_k\}$, with $k\in[n]$.
    Without loss of generality assume that $i_1=1<i_2<i_3<\cdots<i_k$.
    Let $\alpha=(a_1,a_2,\ldots,a_n)$ where
    \[a_x=\begin{cases}
        x &\mbox{if $x\in I$}\\
        i_\ell &\mbox{if $i_\ell<x<i_{\ell+1}$ .}
    \end{cases}\]
    By construction $\alpha$ is weakly increasing and satisfies that $a_i\leq i$ for all $i\in [n]$. Thus $\alpha\in\PF_n$.
    Every weakly increasing parking function parks the cars in sequential order, ie. car $i$ parks in spot $i$ for all $i\in[n]$.  
    For all $x\in I$, car $x$ has preference $x$, and parks in spot $x$. Thus every car in $I$ is a lucky car of $\alpha$. 
    On the other hand, 
    if $x\notin I$, then car $x$ has preference strictly smaller than $x$ and parks in spot $x$. Thus, these cars would be unlucky.
    Therefore, $\Lucky(\alpha)=I$ and $I$ is a lucky set of $\PF_n$, as desired.
\end{proof}

Theorem~\ref{thm:lucky sets} immediately implies the following corollaries.

\begin{corollary}\label{cor:number lucky sets}
For $n\geq 1$, there are $2^{n-1}$ distinct lucky sets of $\PF_n$.    
\end{corollary}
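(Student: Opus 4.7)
The plan is to deduce this immediately from Theorem~\ref{thm:lucky sets}, which asserts that the lucky sets of $\PF_n$ are precisely the subsets of $[n]$ that contain the element $1$. So the task reduces to counting subsets of $[n]$ that must contain $1$.

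First I would fix the element $1$ as a forced member of any lucky set. Then, for each of the remaining $n-1$ elements $\{2,3,\ldots,n\}$, I would observe that we have a free binary choice: include it in $I$ or not. By the multiplication principle, the number of such subsets is $2^{n-1}$, and by Theorem~\ref{thm:lucky sets} each such subset is a lucky set (and conversely every lucky set arises this way). This gives the desired count.

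There is no real obstacle here, since the corollary is purely an enumerative consequence of the characterization already proved. The only thing worth noting is that the forward and backward directions of Theorem~\ref{thm:lucky sets} are both used in a single line: the forward direction to rule out subsets missing $1$, and the backward direction (via the explicit weakly increasing construction in the proof of that theorem) to confirm that every subset containing $1$ is realized.
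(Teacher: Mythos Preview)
Your proposal is correct and matches the paper's own proof: both invoke Theorem~\ref{thm:lucky sets} (that lucky sets are exactly the subsets of $[n]$ containing $1$) and then count by noting that each of the remaining $n-1$ elements can be freely included or excluded, giving $2^{n-1}$.
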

\begin{proof}
    Since the first car is always lucky, then, for each $1<i\leq n$, it is only a matter of whether car $i$ is lucky or not. This yields $2^{n-1}$ lucky sets of $\PF_n$, as claimed.
\end{proof}

\begin{corollary}
    If $I\subseteq[n]$ is a lucky set of $\PF_n$, then it is a lucky set of $\PF_m$ for all $m\geq n$.
\end{corollary}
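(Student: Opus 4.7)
The plan is to invoke Theorem~\ref{thm:lucky sets} twice. Since Theorem~\ref{thm:lucky sets} gives the clean characterization that a subset of $[n]$ is a lucky set of $\PF_n$ if and only if it contains $1$, the corollary becomes essentially immediate: membership of $1$ in $I$ is preserved when we re-interpret $I$ as a subset of $[m]$ for any $m\geq n$, and that is the only condition that needs to be checked.

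Concretely, I would argue as follows. First, assume $I\subseteq[n]$ is a lucky set of $\PF_n$. By the forward direction of Theorem~\ref{thm:lucky sets}, this forces $1\in I$. Next, fix any $m\geq n$. Then $I\subseteq[n]\subseteq[m]$ and still $1\in I$, so by the reverse direction of Theorem~\ref{thm:lucky sets} applied to $\PF_m$, the set $I$ is a lucky set of $\PF_m$.

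As a sanity check, one could alternatively produce an explicit witness parking function in $\PF_m$ with lucky set $I$ by starting from any $\alpha=(a_1,\ldots,a_n)\in\LPF_n(I)$ and padding it with $m-n$ additional preferences equal to $1$, forming $\alpha'=(a_1,\ldots,a_n,1,1,\ldots,1)\in[m]^m$. Since $\alpha$ already parks all of spots $1,\ldots,n$, each of the appended cars $n+1,\ldots,m$ drives past the full block of occupied spots and parks in spots $n+1,\ldots,m$ in order, hence is unlucky (its preference $1$ differs from its parking spot). Consequently $\Lucky(\alpha')=\Lucky(\alpha)=I$, giving another proof.

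There is no real obstacle here; the work is done by Theorem~\ref{thm:lucky sets}, and the only mild subtlety is noting that the characterization is \emph{uniform in $n$}, so re-embedding $I$ in a larger ground set costs nothing.
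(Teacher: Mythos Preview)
Your proposal is correct. Your primary argument (invoking Theorem~\ref{thm:lucky sets} twice) is not what the paper does: the paper instead gives a direct witness by taking any $\alpha\in\LPF_n(I)$ and appending $m-n$ copies of the value $n$, noting that each appended car is then unlucky. Your ``sanity check'' alternative is essentially this same padding construction, just with $1$ in place of $n$; both padding values work for the same reason. The characterization route is slightly slicker since Theorem~\ref{thm:lucky sets} has already absorbed the work, while the paper's explicit padding has the minor virtue of being self-contained and not re-invoking the reverse direction of Theorem~\ref{thm:lucky sets}.
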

\begin{proof}
        If $\alpha=(a_1,a_2,\ldots,a_n)$ is such that $\Lucky(\alpha)=I$, then appending $m-n$ copies of the value $n$ to $\alpha$ satisfies that $\Lucky((a_1,a_2,\ldots,a_n,\underbrace{n,n,\ldots,n}_{m-n\mbox{ times}}))=I$.
\end{proof}

\subsection{Outcomes of parking functions with a fixed lucky set}\label{sec:outcomes}
Not every permutation is the outcome of a parking function with a fixed set of lucky cars.

\begin{example}
    Let $\alpha\in \PF_4$ and suppose that $\Lucky(\alpha)=\{1,4\}$. Notice that $\pi=1423\in \Sym_4$ is not a possible outcome since car $2$ would park in spot $3$, and would do so having found spot $2$ unoccupied on the street, as it attempts to park. Thus, the only way car $2$ would park in spot $3$ is if it preferred that spot, in which case car $2$ would be lucky, implying that $2\in\Lucky(\alpha)$, a contradiction.
\end{example}

The previous example illustrates that if a permutation is to be the outcome of a parking function with a fixed set of lucky cars, then the descents in that permutation force certain cars to be lucky. 
Before proving this, we adapt the usual definition of a descent and descent bottom for our purposes. 

\begin{definition}\label{def:descent bottoms}
    Given a permutation $\pi=\pi_1\pi_2\cdots\pi_n\in \Sym_n$, an index $1<i\leq  n$ is a \textit{descent} of $\pi$ if $\pi_{i-1}>\pi_i$, and the value $\pi_i$ is called a \textit{descent bottom} of $\pi$. 
    For convenience, we say $i=1$ is always a descent and $\pi_1$ is always a descent bottom.
    The set
    \begin{align*}
    \Des(\pi)&\coloneq\{i\in [n]: i \mbox{ is a descent of }\pi\},
    \intertext{
    is called the \textit{descent set} of $\pi$, and the set }
    \DB(\pi)&\coloneq\{\pi_i\in [n]: \pi_i \mbox{ is a descent bottom of }\pi\},
\end{align*}
is called the \textit{descent bottom set} of $\pi$.
\end{definition}
By using generating functions and well-known bijections between parking functions, Pr\"{u}fer codes, and labeled Dyck paths, Schumacher gave explicit formulas for the total number of descents among parking functions of length $n$ with exactly $i$ ties  \cite{Schumacher_Descents_PF}. Cruz, Harris, Harry, Kretschmann, McClinton, Moon, Museus, and Redmon \cite{cruz2024discretestatisticsparkingfunctions} gave a recursive formula for the number of parking functions with a given descent set $I\subseteq[n]$. 

In our work, we use the descent bottom set of a permutation in connection with describing parking functions with a fixed set of lucky cars.
We begin by establishing that 
if $\pi\in\Sym_n$
is the outcome of a parking function $\alpha $ with $I$ as a lucky set,
then 
all descent bottoms of $\pi$ must be lucky cars of $\alpha$.

\begin{lemma}\label{lem:descent bottoms are lucky}
    Fix $\pi=\pi_1\pi_2\cdots\pi_n\in \Sym_n$. For any $\alpha\in \PF_n$ with outcome $\out(\alpha)=\pi$, 
    if $i\in \Des(\pi)$, then  $\pi_i\in \Lucky(\alpha)$. In other words,
$\DB(\pi)\subseteq\Lucky(\alpha)$.
\end{lemma}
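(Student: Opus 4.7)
The plan is to unpack what a descent in the outcome permutation says about the chronological order in which cars arrive, and then use the greedy parking rule to force the preference of a descent-bottom car to equal its parking spot.

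First I would handle the convention $i=1$ separately: since $\pi_1$ denotes the car parked in spot $1$, its preference is at most $1$, hence exactly $1$, so $\pi_1 \in \Lucky(\alpha)$ and the base case $1 \in \Des(\pi)$ is settled.

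Now suppose $i > 1$ lies in $\Des(\pi)$, so $\pi_{i-1} > \pi_i$. The main observation is that cars enter the street in the order $1, 2, \ldots, n$, so car $\pi_i$ enters \emph{before} car $\pi_{i-1}$. Therefore, at the moment car $\pi_i$ walks onto the street, spot $i-1$ is still unoccupied (car $\pi_{i-1}$ has not yet arrived, and by definition of $\out(\alpha)=\pi$ no other car ever parks in spot $i-1$). Since car $\pi_i$ parks in spot $i$, its preference $a_{\pi_i}$ satisfies $a_{\pi_i} \leq i$. If we had $a_{\pi_i} \leq i-1$, then upon entering the street car $\pi_i$ would traverse the spots from $a_{\pi_i}$ onward and would have to park no later than spot $i-1$, because that spot was empty at the time. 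This contradicts the fact that car $\pi_i$ parks in spot $i$. Hence $a_{\pi_i} = i$, meaning car $\pi_i$ parks in its preferred spot, so $\pi_i \in \Lucky(\alpha)$.

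The only step requiring any care is justifying that spot $i-1$ is empty when car $\pi_i$ arrives; this reduces to the bookkeeping observation that in the outcome $\pi$ each spot is assigned to a unique car, and since cars enter in label order, the label comparison $\pi_i < \pi_{i-1}$ translates directly to a temporal statement. Taking the union over all $i \in \Des(\pi)$ yields $\DB(\pi) \subseteq \Lucky(\alpha)$, as required.
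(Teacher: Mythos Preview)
Your proof is correct and follows essentially the same route as the paper: handle $i=1$ separately, then for a genuine descent use $\pi_{i-1}>\pi_i$ to deduce that spot $i-1$ is vacant when car $\pi_i$ arrives, and argue by contradiction that a preference $\leq i-1$ would force car $\pi_i$ to park at or before spot $i-1$. The paper's argument is identical in structure and detail.
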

\begin{proof}
Let $\alpha$ be a parking function of length $n$  with outcome $\out(\alpha)=\pi=\pi_1\pi_2\cdots\pi_n\in\Sym_n$.
For each $i\in[n]$, since $\out(\alpha)=\pi=\pi_1\pi_2\cdots\pi_n$ we know that car $\pi_i$ parked in spot $i$.
If $i=1$, then car $\pi_1$ was the first car with preference for spot $1$, and parked there, hence it is a lucky car. Implying that ~$\pi_1\in\Lucky(\alpha$).
By definition, for any $1<i\leq n$, if  $i\in\Des(\pi)$, then $\pi_{i-1}>\pi_i$.
This implies that car 
$\pi_{i}$ parks on the street before car $\pi_{i-1}$ enters the street to park. 
Thus, when car $\pi_i$ entered the street to park, spot $i-1$ is unoccupied. 
Assume for contradiction that car $\pi_i$ has a preference $1\leq a_i\leq i-1$. Then car $\pi_i$ would park in the first available spot past $a_i$. 
Among the spots numbered $a_i,a_i+1,\ldots, i-1$ spot $i-1$ remains available. 
So car $\pi_i$ would park in spot $x$ satisfying $a_i\leq x\leq i-1<i$, contradicting the assumption that car $\pi_i$ parks in spot $i$. 
Thus, car $\pi_i$ parks in spot $i$, must have done so by preferring spot $i$. 
This makes car $\pi_i$ a lucky car, 
and so whenever $i\in \Des(\pi)$ we have that $\pi_i\in \Lucky(\alpha)$, 
as claimed.
\end{proof}

Lemma~\ref{lem:descent bottoms are lucky} establishes that descent bottoms are lucky cars. 
However, not all lucky cars have to be descent bottoms. 
For example, if $I=[n]$, then all cars are lucky and a possible outcome is $\pi=123\cdots n$, which has $\pi_1=1$ as the only descent bottom.
We now give a complete characterization for permutations that are outcomes of a parking function with a fixed set of lucky cars.

\begin{theorem}\label{thm:characterizing outcomes}
Fix a lucky set $I\subseteq[n]$ of $\PF_n$. 
Then $\pi=\pi_1\pi_2\cdots\pi_n\in \Sym_n$ is the outcome an $\alpha\in\LPF_n(I)$
if and only if 
\begin{enumerate}
    \item $\pi_1\in I$,
    \item if $\pi_i\in I$ and $\pi_{i+1}\notin I$, then $\pi_i<\pi_{i+1}$, and 
    \item if $\pi_{i-1}>\pi_{i}$, then $\pi_{i}\in I$.
\end{enumerate}
\end{theorem}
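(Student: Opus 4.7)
The plan is to prove the two implications separately, using Lemma~\ref{lem:descent bottoms are lucky} and a careful analysis of the parking process for the forward direction, and an explicit construction of a witnessing parking function for the converse.

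For ($\Rightarrow$), assume $\alpha\in \LPF_n(I)$ satisfies $\out(\alpha)=\pi$. Condition~(1) follows because spot~$1$ can only be occupied by a car that explicitly prefers spot~$1$, so $\pi_1$ must be lucky; alternatively this is Lemma~\ref{lem:descent bottoms are lucky} applied to the convention that $i=1$ is always a descent. Condition~(3) is precisely Lemma~\ref{lem:descent bottoms are lucky} restated. For condition~(2), I would argue by contradiction: suppose $\pi_i\in I$, $\pi_{i+1}\notin I$, and $\pi_i>\pi_{i+1}$. Then car $\pi_{i+1}$ enters the street before car $\pi_i$, and at that moment the occupied spots are exactly $\{j:\pi_j<\pi_{i+1}\}$, which does not include spot $i$ since $\pi_i>\pi_{i+1}$. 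Because $\pi_{i+1}\notin I$, its preference is at most $i$, so walking forward from that preference it would park in an available spot no later than $i$, contradicting $\out(\alpha)_{i+1}=\pi_{i+1}$.

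For ($\Leftarrow$), given $\pi$ satisfying~(1)--(3), I will exhibit a witnessing parking function by setting
\[a_{\pi_i}=\begin{cases} i & \text{if }\pi_i\in I,\\ i-1 & \text{if }\pi_i\notin I,\end{cases}\]
which is well defined because (1) forces $\pi_1\in I$, avoiding any $a_{\pi_1}=0$. I would then prove by induction on the car label $k=1,\ldots,n$ that car $k$ parks in spot $i:=\pi^{-1}(k)$. The inductive hypothesis guarantees that when car $k$ arrives, the occupied spots are precisely $\{j:\pi_j<k\}$, so spot $i$ itself is empty. If $\pi_i\in I$ the car prefers $i$ and parks there immediately; if $\pi_i\notin I$ the car prefers $i-1$, and condition~(3) applied to index $i$ forces $\pi_{i-1}<\pi_i=k$, so spot $i-1$ is already occupied and the car is pushed forward to the empty spot $i$. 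This yields $\alpha\in\PF_n$ with $\out(\alpha)=\pi$, and since a car is lucky iff its preference equals its parking spot, the construction gives $\Lucky(\alpha)=I$ directly.

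The chief obstacle is selecting the right preference for unlucky cars: a value chosen too small (for instance $a_{\pi_i}=1$) may cause an unlucky car to occupy a spot destined for another car, so one must exploit property~(3) to guarantee that spot $i-1$ is already filled when car $\pi_i\notin I$ arrives. I also note that condition~(2) is logically a consequence of~(3)---a descent $\pi_i>\pi_{i+1}$ would make $\pi_{i+1}$ a descent bottom forced into $I$ by~(3), contradicting $\pi_{i+1}\notin I$---so (2) is technically redundant, but it remains valuable in the statement as an explicit description of how lucky and unlucky cars interleave in an outcome.
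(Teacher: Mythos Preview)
Your proof is correct and follows essentially the same approach as the paper: the forward direction is identical, and your explicit preference assignment $a_{\pi_i}=i$ for $\pi_i\in I$ and $a_{\pi_i}=i-1$ for $\pi_i\notin I$ is exactly the construction the paper gives (once one unwinds its $\pi_k^{-1}$ notation to recognize that $\pi_{\pi_i}^{-1}=i$ and $\pi_{\pi_{i-1}}^{-1}=i-1$). Your inductive verification that each car parks in spot $\pi^{-1}(k)$ is cleaner than the paper's three separate claims, and your remark that condition~(2) is a consequence of~(3) is a valid observation the paper does not make.
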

\begin{proof}
\noindent $(\Rightarrow)$ For the forward direction we verify that when $\pi$ is an outcome of a parking function with lucky set $I$ it satisfies  each of the needed conditions.
\begin{itemize}[leftmargin=.6in]
\item[For (1):] In any parking function the first car to prefer spot $1$ will park there, hence, that car is a lucky car. 
So if $\pi=\pi_1\pi_2\cdots\pi_n=\out(\alpha)$, then $\pi_1\in I$.

\item[For (2):] Suppose $\pi_i\in I$ and $\pi_{i+1}\notin I$. 
If $\pi_i<\pi_{i+1}$, then we are done. For sake of contradiction, suppose that  $\pi_i>\pi_{i+1}$. 
This means that when car $\pi_{i+1}$ parks in spot $i+1$, car $\pi_i$ has not yet entered the street to park. 
Hence, spot $i$ is empty, when car $\pi_{i+1}$  enters the street to park. 
However, car $\pi_{i+1}$ is not lucky, so it must have preferred a spot numbered $x$, where $x<i+1$. 
As spot $i$ is empty, car $\pi_{i+1}$ would have parked in a spot numbered $y$ with $y\leq i$. 
Hence, car $\pi_{i+1}$ would not have parked in spot $i+1$, a contradiction. 

 \item[For (3):] Suppose that $\pi_{i-1}>\pi_{i}$. Then, by  Lemma~\ref{lem:descent bottoms are lucky}, $\pi_i\in I$.
\end{itemize}
\smallskip

\noindent$(\Leftarrow)$
Let $\pi=\pi_1\pi_2\cdots\pi_n\in \Sym_n$ 
satisfy conditions (1), (2) and (3) above. 
We use the following notation:
\begin{itemize}
    \item $\pi_x=y$ denotes that car $y$ parked in spot $x$ on the street,
    \item $\pi_u^{-1}=v$ denotes that car $u$ parks in spot $v$ on the street.
\end{itemize}
By the same argument as in \cite[Theorem 2.2]{MVP}, if $\pi^{-1}=\pi^{-1}_1\pi^{-1}_2\cdots\pi^{-1}_n$
, then \[\out{}((\pi^{-1}_1,\pi^{-1}_2,\ldots,\pi^{-1}_n))=\pi.\]
We now construct $\alpha=(a_1,a_2,\ldots,a_n)\in \PF_n$, such that $\out(\alpha)=\pi$ and $\Lucky(\alpha)=I$.
For each $\pi_i=1,2,\ldots, n$ (in this order) set the value of $a_{\pi_i}$ as follows:

\begin{enumerate}
    \item If $\pi_i\in I$, set $a_{\pi_i}=\pi_{j}^{-1}$ where $j=\pi_i$.
    \item If $\pi_i\notin I$, set $a_{\pi_i}$ as follows:
    \begin{enumerate}
        \item If $\pi_{i-1}\in I$ and $\pi_{i-1}<\pi_i$, then set $a_{\pi_i}=\pi_{k}^{-1}$ where $k=\pi_{i-1}$.
        
        \item If $\pi_{i-1}\in I$ and $\pi_{i-1}>\pi_i$, then by Lemma \ref{lem:descent bottoms are lucky} we would have that  $\pi_{i}\in I$, a contradiction. Thus this case never happens.

        \item If $\pi_{i-1}\notin I$, then set $a_{\pi_{i}}=\pi_{k}^{-1}$ where $k=\pi_{i-1}$.
    \end{enumerate}
\end{enumerate}
We must now establish the following claims:\\

    \begin{claim}   
    As constructed, $\alpha$  is a parking function.
    \end{claim} 
    \begin{proof}
    Prior to beginning the proof we recall the characterization of parking functions from the introduction: a tuple $\beta$ is a parking function if and only if its rearrangement  into weakly increasing order, denoted $\beta^\uparrow=(b_1',b_2',\ldots,b_n')$, satisfies  $b_i'\leq i$ for all $i\in[n]$. 
    We will establish this inequality condition for the constructed tuple $\alpha$. 
    By construction, for each $i\in[n]$, the entry $a_i$ in $\alpha$ is assigned at most the parking spot where car $i$ ends up parking. Namely,
    \begin{align}
        a_i\leq \pi_i^{-1}\label{eq:inq satisfied}
    \end{align} for all $i\in[n]$. Here $\pi^{-1}$ is a permutation of $[n]$ so its rearrangement into weakly increasing order is the identity permutation $12\cdots n$.
    Then rearranging the entries of $\alpha$ into weakly increasing order, and denoting this by $\alpha^\uparrow=(a_1',a_2',\ldots,a_n')$, together with \Cref{eq:inq satisfied} ensures that 
    $a_i'\leq i$ for all $i\in[n]$. Thus $\alpha$ is a parking function.
    \end{proof}

    \begin{claim} The set of lucky cars of $\alpha$ is precisely $I$ i.e., $\Lucky(\alpha)=I$.
    \end{claim}
    \begin{proof}
        The only preferences used in $\alpha$ are the values $\pi_i^{-1}$ for $i\in I$. Moreover, all of the lucky cars in $I$, get the first instance of those preference as any unlucky car is given the preference of the nearest lucky car parked to its left, which makes those cars unlucky. This implies that the set of lucky cars in $\alpha$ is precisely~$I$.
    \end{proof}
        
        \begin{claim} The outcome of $\alpha$ is $\pi$  i.e., $\out{}(\alpha)=\pi$.
        \end{claim}
            \begin{proof}
By the same argument as in \cite[Theorem 2.2]{MVP}, $\out{}((\pi^{-1}_1,\pi_2^{-1},\ldots,\pi_n^{-1}))=\pi$. As $\alpha$ is constructed from $\pi^{-1}$ an entry at a time, giving car $j=\pi_i$
preference $\pi_j^{-1}$ whenever it is lucky, and giving the unlucky cars the preference of the previous lucky car, the outcome of $\alpha$ is exactly the same as the  outcome of $\pi^{-1}$, when we consider $\pi^{-1}$ as a parking function with entries $(\pi_1^{-1},\pi_2^{-1},\ldots,\pi_n^{-1})$. Thus $\out(\alpha)=\pi$ as desired. 
\end{proof}

This completes the proof.
\end{proof}

\subsection{Counting outcomes  where the first $k$ cars are lucky}

In Theorem \ref{thm:characterizing outcomes} we gave a complete characterization for the permutations that arise as outcomes of parking functions with a fixed set of lucky cars. 
We now set some notation to describe those permutations.

\begin{definition}\label{def:outcomes}
    Fix a lucky set $I$ of $\PF_n$.
    The set of \textit{outcomes} for $I$ is the subset of $\Sym_n$ defined as 
\[\out_n(I)\coloneq\{\pi\in \Sym_n: \out(\alpha)=\pi\mbox{ for some }\alpha\in \LPF_{n}(I)\}.\]
\end{definition}

\begin{example}\label{ex: n=5 lucky cars 1 and 4}
    Let $n=5$ and $I=\{1,4\}$. 
    By Theorem \ref{thm:characterizing outcomes}, $\pi=\pi_1\pi_2\pi_3\pi_4\pi_5\in\out_4(\{1,4\})$ if and only if 
    \begin{enumerate}[leftmargin=.5in]
        \item $\pi_1=1$ or $\pi_1=4$
        \item if $\pi_i\in I$ and $\pi_{i+1}\notin I$, then $\pi_i<\pi_{i+1}$, and 
        \item if $\pi_i-1>\pi_i$, then $\pi_i\in I$.
    \end{enumerate}
    Condition (3) implies that $\DB(\pi)\subseteq I$, which we proved in \Cref{lem:descent bottoms are lucky}.
Let us construct these permutations one condition at a time. 
There are $2\cdot 4!=48$ permutations in $n=5$ satisfying Condition~(1), as $\pi=1\sigma$ with $\sigma \in \Sym_{\{2,3,4,5\}}$ or $\pi=4\sigma$ with $\sigma\in \Sym_{\{1,2,3,5\}}$.
By Condition (2): 
\begin{itemize}[leftmargin=.25in]
\item If $\pi=1\sigma$ with $\sigma=\sigma_1\sigma_2\sigma_3\sigma_4\in \Sym_{\{2,3,4,5\}}$, then  $2\leq \sigma_1\leq 5$. To ensure $\pi$ also satisfies Condition (3),  $\sigma$ can either have no descents, or a descent bottom at $4$. Hence $\pi$ can only be 
$12345$ or $12354$.
\item If $\pi=4\sigma$ with $\sigma=\sigma_1\sigma_2\sigma_3\sigma_4\in \Sym_{\{1,2,4,5\}}$, then Condition (3) implies that $\sigma$ must have only a descent bottom at $1$.
 Hence $\pi$ can only be $41235$ or $45123$.
\end{itemize}
Thus $\out_5(\{1,4\})=\{12345,12354,41235,45123\}$.
\end{example}

We now give a formula for the number of parking outcomes when the first $k$ cars are lucky.
\begin{theorem}\label{thm:number of outcomes with first k cars lucky}
     Let $I=\{1,2,3,\ldots,k\}\subseteq[n]$ be a lucky set of $\PF_n$. Then 
     \[|\out_n(I)|=\sum_{J=\{j_1=1,j_2,j_3,\ldots,j_k\}\subseteq[n]}k!\binom{n-k}{j_2-j_1-1,j_3-j_2-1,\ldots,j_{k}-j_{k-1}-1,n-j_{k}}.\]
 \end{theorem}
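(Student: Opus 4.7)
The plan is to apply \Cref{thm:characterizing outcomes} to the lucky set $I=\{1,2,\ldots,k\}$, stratify the outcomes by the positions at which the lucky cars appear, and count within each stratum. A key simplification for this choice of $I$ is that condition~(2) of \Cref{thm:characterizing outcomes} holds automatically: whenever $\pi_i\in I$ and $\pi_{i+1}\notin I$, we have $\pi_i\leq k<k+1\leq \pi_{i+1}$. Hence $\pi\in\out_n(I)$ if and only if $\pi_1\in\{1,\ldots,k\}$ and every descent bottom of $\pi$ lies in $\{1,\ldots,k\}$.

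I would next index outcomes by the set $J=\{j_1<j_2<\cdots<j_k\}\subseteq[n]$ of positions occupied by values from $I$. Condition~(1) forces $j_1=1$. Condition~(3), together with the fact that every unlucky value exceeds every lucky value, forces each maximal run of consecutive non-lucky positions, namely $\{j_s+1,\ldots,j_{s+1}-1\}$ for $s\in[k-1]$ and $\{j_k+1,\ldots,n\}$, to carry values in strictly increasing order, since otherwise a descent would appear with bottom in $\{k+1,\ldots,n\}$. Conversely, any placement satisfying this increasing-run condition gives a valid outcome: the only remaining descents occur at a transition from an unlucky to a lucky position, and the bottoms of such descents lie in $I$ automatically.

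Given a fixed admissible $J$, the count splits into two independent choices. The $k$ lucky cars $\{1,\ldots,k\}$ can be placed in the positions $j_1,\ldots,j_k$ in any of $k!$ orders with no extra constraint, since either two consecutive lucky positions are separated by an unlucky run (whose internal order and boundary descents are governed by the preceding argument) or they are adjacent (in which case condition~(3) is vacuous, both values lying in $I$). The $n-k$ unlucky cars $\{k+1,\ldots,n\}$ must then be distributed among the $k$ increasing runs of sizes $j_2-j_1-1,\,j_3-j_2-1,\,\ldots,\,j_k-j_{k-1}-1,\,n-j_k$, and once the underlying subset of each run is chosen the values are uniquely ordered; this contributes the multinomial coefficient. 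Summing over all admissible $J$ (those with $j_1=1$) yields the stated formula. The main subtlety to verify carefully is that no latent descent constraint among the lucky values has been missed; this is ensured because any descent involving two lucky values has its bottom in $I$ and is therefore allowed, so the factor $k!$ is indeed the correct local count at the lucky positions.
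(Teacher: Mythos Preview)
Your proposal is correct and follows essentially the same approach as the paper: both arguments stratify outcomes by the position set $J$ of the lucky cars (with $j_1=1$), observe that the lucky values $\{1,\ldots,k\}$ may be placed in those positions in any of $k!$ orders, and that the unlucky values must fill the gaps in increasing order, yielding the multinomial coefficient. Your presentation is slightly more formal in that you explicitly verify conditions (1)--(3) of \Cref{thm:characterizing outcomes} (and note that condition~(2) is automatic for $I=[k]$), whereas the paper reasons directly from the parking interpretation, but the decomposition and counting are identical.
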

\begin{proof}
  Let $I=\{1,2,3,\ldots,k\}$ be the set of lucky cars. 
  Fix a set of indices of parking spots $J=\{j_1=1,j_2,\ldots,j_k\}\subseteq[n]$, satisfying $j_1<j_2<\ldots<j_k\leq n$, in which the $k$ lucky cars park. 
  The lucky cars can park in any order among the spots in $J$, so there are $k!$ many ways for those cars to appear in an outcome permutation with lucky set $I$. 
  Since the lucky cars park on the street first, we need to park the $n-k$ unlucky cars, which are numbered $k+1,k+2,\ldots,n$. 
  The unlucky cars must park in the gaps between any two consecutively parked lucky cars. 
  The gaps between any two consecutively parked cars have size $j_{i+1}-j_{i}-1$ whenever $1\leq i\leq k-1$, and (at the end of the street) the gap from the last parked lucky car in position $j_k$ and the end of the street has length $n-j_k$.
  We can select the unlucky cars that park in each respective gap in $\binom{n-k}{j_2-j_1-1,j_3-j_2-1,\ldots,j_{k}-j_{k-1}-1,n-j_{k}}$ ways. 
  For each such selection of cars parking in those gaps, the unlucky cars must park in increasing order, thereby ensuring that those cars create no descents in the outcome, as that would imply that an unlucky car is lucky, which would give a contradiction. 
  
  Taking the sum over all possible subsets $J$ containing $1$ and of size $k$, we find that the number of outcomes of parking functions with lucky cars in the set $I=\{1,2,3,\ldots,k\}$ is given by 
\[|\out_n(I)|=\sum_{J=\{j_1=1,j_2,j_3,\ldots,j_k\}\subseteq[n]}k!\binom{n-k}{j_2-j_1-1,j_3-j_2-1,\ldots,j_{k}-j_{k-1}-1,n-j_{k}},\]
  as claimed.
\end{proof}
We conclude by noting that although we have a complete characterization for the elements of $\out_n(I)$ for any lucky set $I$, giving a closed formula for the cardinality of the set in general, remains an open problem.

\subsection{Counting parking functions with a fixed set of lucky cars}\label{sec:pfs with fixed lucky set}

Throughout this section $I$ is a lucky set of $\PF_n$. 
Hence, by Theorem~\ref{thm:lucky sets}, $1\in I$. We begin with the following definitions.

\begin{definition}\label{def:number of cars arriving before me parking to the left of me}
    For $\pi=\pi_1\pi_2\cdots\pi_n\in \Sym_n$ 
    let 
    $\ell(\pi_i)$
    be the length of the  longest subsequence $\pi_j\pi_{j+1}\cdots\pi_{i-1}$ with $\pi_t<\pi_i$ for all $j\leq t\leq i-1$.
\end{definition}
\begin{example}\label{ex:lengths for a particular pi}
If $\pi=195348267\in \Sym_9$, then 
$\ell(\pi_1=1)=0$,
$\ell(\pi_2=9)=1$,
$\ell(\pi_3=5)=0$,
$\ell(\pi_4=3)=0$,
$\ell(\pi_5=4)=1$,
$\ell(\pi_6=8)=3$,
$\ell(\pi_7=2)=0$,
$\ell(\pi_8=6)=1$,
$\ell(\pi_9=7)=2$.
\end{example}

For each $i\in[n]$, the definition of $\ell(\pi_i)$ counts the number of cars arriving before car $\pi_i$ that parked contiguously and  immediately to the left of car $\pi_i$. We show (in Lemma \ref{lem:possible prefs given an outcome}) that in the case that car $\pi_i$ is an unlucky car, the number $\ell(\pi_i)$ is exactly the number of potential spots car $\pi_i$ could prefer and which car $\pi_i$ would find occupied by cars that arrived and parked before it. 
We make this precise next.

\begin{definition}\label{def:set of possible outcomes}
    Fix a lucky set $I$ of $\PF_n$ and a permutation $\pi=\pi_1\pi_2\cdots\pi_n\in\out_n(I)$.
    For each $i\in[n]$, let $\Pref_I(\pi_i)$ denote the set of preferences of car $\pi_i$ in a parking function $\alpha$ of length $n$ satisfying $\Lucky (\alpha)=I$ and $\out(\alpha)=\pi$.
\end{definition}

\begin{example}[Continuing Example \ref{ex:lengths for a particular pi}] \label{ex:unlucky preferences is the lentgh of subsequence}If $\pi=195348267\in \Sym_9$ and car $\pi_8=6$ is unlucky, then it could only prefer the spot occupied by car $\pi_7=2$ which parked immediately to its left. 
This agrees with $\ell(\pi_8=6)=1$. 
If car $\pi_6=8$ is unlucky then it could prefer the spots occupied by cars $\pi_{3}=5$, $\pi_{4}=3$, and $\pi_{5}=4$, as those cars are parked immediately to the left of car $\pi_6=8$, and arrived prior to car $\pi_6=8$. So car $\pi_6=8$ has 3 preferences which would make it an unlucky car. This agrees with $\ell(\pi_6=8)=3$.
\end{example}

The count for the preferences of unlucky cars illustrated in Example \ref{ex:unlucky preferences is the lentgh of subsequence} holds in general. We prove this next.

\begin{lemma}\label{lem:possible prefs given an outcome}
    Fix a lucky set $I$ of $\PF_n$ and fix $\pi=\pi_1\pi_2\cdots\pi_n\in\out_n(I)$. Then
    \[|\Pref_I(\pi_i)| 
    =\begin{cases}
    1&\mbox{if $\pi_i\in I$}\\
    \ell(\pi_i)&\mbox{if $\pi_i\notin I$}
    \end{cases}\]
    and the number of possible parking functions $\alpha$ with outcome $\pi$ and lucky set $I$ is equal to
\[\prod_{i=1}^n |\Pref_{I}(\pi_i)|=\prod_{\pi_j\notin I}\ell(\pi_j).\]
\end{lemma}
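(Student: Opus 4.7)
The plan is to split the proof into two parts: first, I would establish the case-by-case formula for $|\Pref_I(\pi_i)|$; second, I would show that the preferences of distinct cars can be chosen independently, lifting the pointwise count to the product formula on the right-hand side.

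For $|\Pref_I(\pi_i)|$, the central observation is that since $\out(\alpha) = \pi$, the set of occupied spots when car $\pi_i$ enters the street is exactly $\{t \in [n] : \pi_t < \pi_i\}$: these are precisely the cars that arrive before car $\pi_i$, and by the outcome condition each such car $\pi_t$ parks at spot $t$. In particular, spot $i$ is empty at that moment, while the maximal contiguous run of occupied positions immediately to the left of $i$ is exactly $\{j, j+1, \ldots, i-1\}$ with $j$ as in Definition~\ref{def:number of cars arriving before me parking to the left of me}, so this run has length $\ell(\pi_i)$. A preference $a_{\pi_i} < j$ forces car $\pi_i$ to stop at the empty position $j-1$ (or earlier), and a preference $a_{\pi_i} > i$ overshoots spot $i$, so necessarily $a_{\pi_i} \in \{j, j+1, \ldots, i\}$. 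If $\pi_i \in I$ then the preference must equal $i$, yielding $|\Pref_I(\pi_i)| = 1$; if $\pi_i \notin I$ then the preference must lie in $\{j, \ldots, i-1\}$, yielding $|\Pref_I(\pi_i)| = \ell(\pi_i)$.

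For the product formula I would consider the map sending $\alpha$ to $(a_{\pi_1}, a_{\pi_2}, \ldots, a_{\pi_n})$ and show it is a bijection onto $\Pref_I(\pi_1) \times \cdots \times \Pref_I(\pi_n)$. One direction is immediate from the preceding paragraph. For the reverse, given any tuple $(b_1, \ldots, b_n)$ with $b_i \in \Pref_I(\pi_i)$, I would define $\alpha$ via $a_{\pi_i} = b_i$ and induct on the arrival order $k = 1, 2, \ldots, n$: assuming cars $1, \ldots, k-1$ have parked at the spots $\pi^{-1}_1, \ldots, \pi^{-1}_{k-1}$, the occupied spots when car $k = \pi_{i_k}$ arrives are again $\{t : \pi_t < k\}$, so the preceding analysis shows that the chosen preference lands car $k$ at spot $i_k$. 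Moreover, by construction $a_{\pi_i} = i$ exactly when $\pi_i \in I$, so the lucky/unlucky status of each car is correct and $\Lucky(\alpha) = I$. The main obstacle, such as it is, lies in justifying the inductive step: one must observe that the occupied-spot description depends only on $\pi$ and not on which particular valid preferences were chosen for earlier cars, which is ensured once the inductive hypothesis pins down where those cars parked. With that in hand the single-car analysis applies verbatim at each step, and the product $\prod_{\pi_j \notin I} \ell(\pi_j)$ follows.
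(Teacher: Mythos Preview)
Your proof is correct and follows essentially the same approach as the paper: both identify the spots occupied when car $\pi_i$ arrives as $\{t:\pi_t<\pi_i\}$ and deduce the possible preferences case by case from the maximal run of smaller values to the left of position $i$. Your treatment of the product formula via an explicit bijection and induction on arrival order is in fact more careful than the paper's, which simply asserts that ``the car preferences are independent of each other'' and multiplies.
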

\begin{proof}
    Let $\alpha=(a_1,a_2,\ldots,a_n)\in\PF_n$ satisfying 
    $\Lucky(\alpha)=I$ and $\out(\alpha)=\pi=\pi_1\pi_2\cdots\pi_n\in\Sym_n$. For each $i\in[n]$, car $\pi_i$ is either lucky or unlucky. 
    If car $\pi_i$ is lucky, then car $\pi_i$ parking in spot $i$ must have had spot $i$ as its preference. This is a unique preference, which agrees with the value of $\ell(\pi_i)=1$ for all $\pi_i\in I$.
    
    Now consider the case where $i\in[n]$ for which car $\pi_i$ is an unlucky car that parks in spot $i$. Hence, $\pi_i\notin I$.
    In this case, the only possible preferences that car $\pi_i$ can have, so as to park in spot $i$ and be unlucky, is the set of spots occupied immediately to the left of spot $i$ provided those spots were occupied by cars that arrived and parked before car $\pi_i$. 
    That is, if the sequence of cars  $\pi_j\pi_{j+1}\cdots\pi_{i-1}$ is the longest sequence satisfying $\pi_t<\pi_i$ for all $j\leq t\leq i-1$, then the unlucky car $\pi_i$ can prefer any of the spots numbered $j,j+1,\ldots,i-1$. This is precisely the value of $\ell(\pi_i)$ when $\pi_i\notin I$.

As the car preferences are independent of each other, we have that the product of $|\Pref_I(\pi_i)|$ over all $i\in[n]$ counts  all of the possible preferences for cars $1$ to $n$ so that they have outcome $\pi$ and lucky set $I$. Thus, the number of possible parking functions $\alpha$ with outcome $\pi$ and lucky set $I$ is equal to
\[\prod_{i=1}^n |\Pref_{I}(\pi_i)|=\left(\prod_{\pi_i\in I} \ell(\pi_i)\right)\cdot \left(\prod_{\pi_j\notin I}\ell(\pi_j)\right)=\prod_{\pi_j\notin I}\ell(\pi_j),\]
where the last equality holds as $\ell(\pi_i)=1$ for all $\pi_i\in I$.
\end{proof}

\begin{example}[Continuing Example \ref{ex:lengths for a particular pi}]
Observe that $\pi=195348267\in \Sym_9$ has $\DB(\pi)=\{1=\pi_1,2=\pi_7,3=\pi_4,5=\pi_3\}$, and so $\pi$ can be the outcome of a parking function with lucky set $I$ provided $\{1=\pi_1,2=\pi_7,3=\pi_4,5=\pi_3\}\subseteq I$.
Suppose that $I=\{1,2,3,5,7\}$. Then, by Lemma \ref{lem:possible prefs given an outcome}, the number of parking functions $\alpha$ satisfying $\out(\alpha)=\pi$ and $\Lucky(\alpha)=I$ is given by 
\[\prod_{i=1}^9|\Pref_I(\pi_i)|=\prod_{\pi_j\notin I}\ell(\pi_j)=
\ell(\pi_{5}=4)\cdot \ell(\pi_{8}=6)\cdot \ell(\pi_{6}=8)\cdot
\ell(\pi_{2}=9)=1\cdot 1\cdot 3\cdot 1=3.\]
In fact, the only parking functions $\alpha\in\PF_9$ with lucky set $I$ and outcome $\pi$ are:
\[(1,7,4,4,3,7,9,\fbox{3},1),(1,7,4,4,3,7,9,\fbox{4},1),\mbox{    and   }(1,7,4,4,3,7,9,\fbox{5},1),\]
where we box the difference between these parking functions,
which denotes  the possible preferences of car $\pi_6=8$ being spots $3$, $4$, and $5$, as those spots are occupied by earlier cars $\pi_3=5$, $\pi_{4}=3$, and $\pi_{5}=4$, respectively. Hence, car $8$ is forced to park in spot $6$.

If instead we consider the lucky set $J=\{1,2,3,5,8\}$, then, by Lemma \ref{lem:possible prefs given an outcome}, the number of parking functions $\alpha$ satisfying $\out(\alpha)=\pi$ and $\Lucky(\alpha)=J$ is 
\[\prod_{i=1}^9|\Pref_{J}(\pi_i)|=\prod_{\pi_j\notin J}\ell(\pi_j)=
\ell(\pi_{5}=4)\cdot \ell(\pi_{8}=6)\cdot \ell(\pi_{9}=7)\cdot
\ell(\pi_{2}=9)=1\cdot 1\cdot 2\cdot 1=2.\]
Hence, then parking functions $\alpha\in\PF_9$ with lucky set $J=\{1,2,3,5,8\}$ and outcome $\pi$ are:
\[(1,7,4,4,3,7,\fbox{7},6,1),\mbox{    and   }(1,7,4,4,3,7,\fbox{8},6,1),\]
where we box the difference between these parking functions, which denotes  the possible preferences of car $7$ being spots $7$ and $8$, as those spots are occupied by cars $2$ and $6$, respectively. Hence, car $7$ is forced to park in spot $9$.
\end{example}

We now give a formula for the number of parking functions with a fixed set of lucky cars. Our proof technique is similar to that of Colmenarejo et al., as
presented in \cite[Corollary 3.1]{colmenarejo2021counting}.

\begin{theorem}\label{thm:count pfs with fixed lucky set}
Fix a lucky set $I$ of $\PF_n$. Let $\out_n(I)$ denote the set of outcome permutations for parking functions with lucky set $I$.
Then the number of parking functions with lucky set $I$ is
\begin{align}
|\LPF_n(I)|=
\sum_{\pi=\pi_1\pi_2\cdots\pi_n\in \out_n(I)}
\left(\prod_{\pi_j\notin I} \ell (\pi_j)\right).
\end{align}
\end{theorem}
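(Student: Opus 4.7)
The proof is essentially a bookkeeping argument that partitions $\LPF_n(I)$ by outcome permutation. The plan is to write
\[\LPF_n(I) = \bigsqcup_{\pi \in \Sym_n} \{\alpha \in \LPF_n(I) : \out(\alpha) = \pi\},\]
and then observe that any $\pi$ appearing with a nonempty fiber above must, by Definition \ref{def:outcomes}, lie in $\out_n(I)$. Conversely, every $\pi \in \out_n(I)$ arises from at least one $\alpha \in \LPF_n(I)$, so the sum may be taken over exactly $\out_n(I)$ without loss.

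Once the partition is established, the count within each fiber is exactly what Lemma \ref{lem:possible prefs given an outcome} computes. Specifically, for $\pi \in \out_n(I)$, the number of $\alpha \in \LPF_n(I)$ with $\out(\alpha) = \pi$ equals
\[\prod_{i=1}^{n} |\Pref_I(\pi_i)| = \prod_{\pi_j \notin I} \ell(\pi_j),\]
where the reduction to a product over unlucky indices uses the $\ell(\pi_i) = 1$ half of the lemma. Summing across outcomes then yields \Cref{eq:main counting sum} directly.

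There is no substantial obstacle left at this stage, since the heavy lifting has been distributed across earlier results: Theorem \ref{thm:characterizing outcomes} guarantees that $\out_n(I)$ correctly indexes the fibers that can be nonempty, while Lemma \ref{lem:possible prefs given an outcome} controls the size of each fiber. The only thing to double-check in writing the argument is the independence claim tacitly used in the lemma, namely that arbitrary choices from $\Pref_I(\pi_i)$ for $i \in [n]$ simultaneously yield a parking function whose outcome is $\pi$ and whose lucky set is exactly $I$; this is ensured by the characterization in Theorem \ref{thm:characterizing outcomes} together with the constructive portion of its proof. After noting this, the theorem follows in a single line by collecting the fiber counts.
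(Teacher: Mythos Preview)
Your proposal is correct and follows essentially the same route as the paper: partition $\LPF_n(I)$ by outcome, invoke Lemma~\ref{lem:possible prefs given an outcome} to size each fiber, and sum over $\out_n(I)$ as characterized in Theorem~\ref{thm:characterizing outcomes}. One small wording issue: the ``$\ell(\pi_i)=1$ half of the lemma'' is not quite accurate, since the lemma states $|\Pref_I(\pi_i)|=1$ for $\pi_i\in I$ (and indeed $\ell(\pi_i)$ need not equal $1$ for lucky cars), but the reduction to $\prod_{\pi_j\notin I}\ell(\pi_j)$ is still valid for that reason.
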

\begin{proof}
Fix a lucky set $I\subseteq[n]$ of $\PF_n$.
By Lemma \ref{lem:possible prefs given an outcome}, for any $\pi=\pi_1\pi_2\cdots\pi_n\in \out_n(I)$, the number of parking functions $\alpha\in \LPF_n(I)$ with outcome $\pi$ is given by $\prod_{\pi_j\notin I}\ell(\pi_j)$.
To account for all parking functions with lucky set $I$, we must sum over all permutations  $\pi$ arising 
as the outcomes such parking functions. 
Those are the permutations characterized in 
Theorem \ref{thm:characterizing outcomes} and that set is denoted by $\out_n(I)$.
This completes the proof.
\end{proof}

\begin{example}\label{ex: continuing n=5 lucky cars 1 and 4}
In \Cref{ex: n=5 lucky cars 1 and 4} we determined that the set of outcomes of parking functions of length $n=5$ with lucky set $I=\{1,4\}$ is  $\out_n(I)=\{12345,12354,41235,45123\}$.
Next, for each $\pi\in \out_5(I)$, we use \Cref{lem:possible prefs given an outcome} to compute the number of parking functions with lucky set $I$ and outcome $\pi$: 
\begin{itemize}[leftmargin=.2in]
    \item If $\pi=12345$, then there are $
    \displaystyle\prod_{\pi_j\in\{2,3,5\}}\ell(\pi_j)=1\cdot2 \cdot4=8$ parking functions.
    \item
    If $\pi=12354$, then there are $\displaystyle\prod_{j\in\{2,3,5\}}\ell(\pi_j)=1\cdot 2\cdot3=6$ parking functions.
    \item
    If $\pi=41235$, then there are $\displaystyle\prod_{\pi_j\in\{2,3,5\}}\ell(\pi_j)=1\cdot2\cdot4=8$ parking functions.
    
    \item
    If $\pi=45123$, then 
    there are $\displaystyle\prod_{\pi_j\in\{2,3,5\}}\ell(\pi_j)=1 \cdot2\cdot1=2$ parking functions.
    
\end{itemize}
Thus, by Theorem \ref{thm:count pfs with fixed lucky set}, the number of parking functions of length $5$ with lucky set $I=\{1,4\}$ is given by 
\begin{align}
    |\LPF_5(\{1,4\})|&=\sum_{\pi\in\out_5(\{1,4\})}\left(\prod_{\pi_j\in \{2,3,5\}}\ell(\pi_j)\right)=8+6+8+2=24.\notag
\end{align}
\end{example}

\subsection{Weakly increasing parking functions with a fixed set of lucky cars}

In this section, we let $\PF_{n}^{\uparrow}$ denote the set of parking functions $\alpha=(a_1,a_2,\ldots,a_n)\in \PF_n$ satisfying $a_i\leq a_{i+1}$ for all $i\in[n-1]$.
The number of elements in $\PF_n^{\uparrow}$ is given by the $n$th Catalan number \[\Cat_n=\frac{1}{n+1}\binom{2n}{n},\] which is OEIS sequence \cite[\seqnum{A000108}]{OEIS}.

We begin by noting that the proof of Theorem~\ref{thm:lucky sets} directly establishes that any subset of $[n]$ including $1$ is a lucky set for $\PF_n^{\uparrow}$. We state this formally next.

\begin{lemma}
    A subset $I\subseteq[n]$ is a lucky subset of $\PF_n^{\uparrow}$ if and only if $1\in I$.
\end{lemma}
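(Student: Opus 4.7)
The plan is to observe that both directions follow essentially from the argument already used in Theorem~\ref{thm:lucky sets}, with only a small additional check needed in the backward direction.

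For the forward direction, I would argue as follows. Suppose $I \subseteq [n]$ is a lucky set of $\PF_n^\uparrow$, so there is some $\alpha \in \PF_n^\uparrow$ with $\Lucky(\alpha) = I$. Since $\PF_n^\uparrow \subseteq \PF_n$, we have $\alpha \in \PF_n$, and the first car in any parking function is always lucky, giving $1 \in I$.

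For the backward direction, suppose $1 \in I$ and write $I = \{i_1 = 1, i_2, \ldots, i_k\}$ with $1 = i_1 < i_2 < \cdots < i_k \leq n$. I would re-use the explicit construction from the proof of Theorem~\ref{thm:lucky sets}: define $\alpha = (a_1, a_2, \ldots, a_n)$ by $a_x = x$ if $x \in I$, and $a_x = i_\ell$ if $i_\ell < x < i_{\ell+1}$ (with the convention $i_{k+1} = n+1$). That proof already verified that $\alpha \in \PF_n$ and that $\Lucky(\alpha) = I$. The only additional observation needed here is that $\alpha$ is weakly increasing: within each block of indices $x$ with $i_\ell \leq x < i_{\ell+1}$, the entry $a_{i_\ell} = i_\ell$ and subsequent entries are all equal to $i_\ell$ up until the next entry $a_{i_{\ell+1}} = i_{\ell+1} > i_\ell$, so consecutive entries are nondecreasing. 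Therefore $\alpha \in \PF_n^\uparrow$, and $I$ is a lucky set of $\PF_n^\uparrow$.

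There is essentially no obstacle; the heavy lifting was done already in Theorem~\ref{thm:lucky sets}, and the only new ingredient is noting monotonicity of the constructed tuple, which is immediate from the definition.
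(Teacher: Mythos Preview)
Your proposal is correct and matches the paper's approach exactly: the paper simply observes that the proof of Theorem~\ref{thm:lucky sets} already establishes this lemma, since the explicit parking function constructed there is noted to be weakly increasing. Your added verification of monotonicity is just a spelled-out version of what the paper's proof of Theorem~\ref{thm:lucky sets} asserts in one line.
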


For a fixed lucky set $I$, let $\LPF_n^{\uparrow}(I)$ denote the set of weakly increasing parking functions $\alpha\in\PF_n^{\uparrow}$  with $\Lucky(\alpha)=I$. We now give a formula for the number of weakly increasing parking functions with a fixed set of lucky cars. 

\begin{theorem}\label{thm:weakly increasing pfs with a fixed set of lucky cars}
Fix a lucky set $I=\{i_1,i_2,i_3,\ldots,i_k\}\subseteq[n]$ of $\PF_n^{\uparrow}$ and assume $1=i_1<i_2<\cdots<i_k\leq n$.
Then 
\begin{align}\label{eq:catalan product}
|\LPF_n^{\uparrow}(I)|=\prod_{j=1}^k\Cat_{x_j},
\end{align}    
where 
$x_{j}=i_{j+1}-i_j-1$ for each $j\in [k-1]$ and $x_k=n-i_k$.
\end{theorem}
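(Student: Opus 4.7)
The plan is to exploit the fact that every weakly increasing parking function parks car $i$ in spot $i$, so $\out(\alpha) = 12\cdots n$ and car $i$ is lucky if and only if $a_i = i$. Thus counting $\LPF_n^{\uparrow}(I)$ reduces to counting weakly increasing sequences $(a_1,\ldots,a_n)$ with $a_t \leq t$ satisfying $a_{i_j} = i_j$ for each $j \in [k]$ and $a_m < m$ for $m \in [n] \setminus I$.

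First I would partition the positions of the parking function into blocks induced by $I$: for $j \in [k-1]$ let $B_j = \{i_j, i_j+1, \ldots, i_{j+1}-1\}$ and let $B_k = \{i_k, i_k+1, \ldots, n\}$. Since the entry $a_{i_j} = i_j$ is fixed for each lucky index, and the weakly increasing condition across the boundary $i_{j+1}$ gives $a_{i_{j+1}-1} \leq a_{i_{j+1}} = i_{j+1}$, the choices in different blocks are independent, so $|\LPF_n^{\uparrow}(I)|$ factors as a product over blocks. It remains to count the choices within each block.

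Next, I would analyze a single block. Consider block $B_j$ with $j < k$; it contains $x_j = i_{j+1}-i_j-1$ unlucky positions $i_j+1, \ldots, i_{j+1}-1$. With $a_{i_j} = i_j$ already fixed, I would set $c_t := a_{i_j+t}$ for $t = 1, \ldots, x_j$ and observe that the constraints become $i_j \leq c_1 \leq c_2 \leq \cdots \leq c_{x_j}$ (weakly increasing, with $c_1 \geq a_{i_j} = i_j$) together with $c_t \leq i_j + t - 1$ (the unlucky condition $a_{i_j+t} < i_j+t$). Substituting $b_t := c_t - i_j + 1$ gives a weakly increasing sequence with $1 \leq b_1 \leq \cdots \leq b_{x_j}$ and $b_t \leq t$, i.e.\ exactly a weakly increasing parking function of length $x_j$. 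Hence this block contributes $\Cat_{x_j}$ choices. The analysis of $B_k$ is identical with $x_k = n - i_k$, using only the upper bounds $c_t \leq i_k + t - 1$ (no upper boundary beyond position $n$ is needed since $c_{x_k} \leq i_k + x_k - 1 = n - 1 \leq n$).

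Taking the product over $j \in [k]$ yields $|\LPF_n^{\uparrow}(I)| = \prod_{j=1}^k \Cat_{x_j}$. The only delicate part is verifying that the weakly increasing condition across block boundaries imposes no additional constraint beyond what is already captured inside each block, which follows because every unlucky entry in $B_j$ is bounded above by $i_{j+1}-2$, automatically at most $a_{i_{j+1}} = i_{j+1}$. Everything else is a clean bijection with the classical Catalan enumeration of weakly increasing parking functions, so I expect this to be the cleanest and most mechanical of the theorems in the section.
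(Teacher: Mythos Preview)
Your proposal is correct and follows essentially the same approach as the paper: both arguments use that a weakly increasing parking function has outcome $12\cdots n$, deduce that car $i$ is lucky precisely when $a_i=i$, and then observe that the constraints on the entries strictly between two consecutive lucky indices $i_j$ and $i_{j+1}$ (namely $i_j\le a_x\le x-1$, weakly increasing) are equivalent, after the shift $b_t=a_{i_j+t}-i_j+1$, to a weakly increasing parking function of length $x_j$, contributing a factor of $\Cat_{x_j}$. Your explicit verification that the cross-block weakly increasing condition is automatic is a nice touch that the paper leaves implicit.
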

\begin{proof}
    If $\alpha\in\PF_n^{\uparrow}$, then the cars park in order and, hence, $\out(\alpha)=\pi=123\cdots n\in\Sym_n$. 
    Namely, for all $i\in [n]$, car $i$ parks in spot $i$.
    Thus, regardless of lucky set $I$, the outcome of a weakly increasing parking function $\alpha$ has no descents nor descent bottoms except at $\pi_1$.
    Thus, if $\alpha=(a_1,a_2,\ldots,a_n)\in\PF_n^{\uparrow}$ has lucky set $I$, then for each $i_j\in I$, with $1\leq j\leq k$, car  $i_j$  has preference $a_{i_j}=i_j$.
    While, for any $j\in[k-1]$, the cars $i_{j}+1, i_{j}+2,\ldots, i_{j+1}-1$ must have weakly increasing preferences 
    $a_{i_{j}+1}\leq 
a_{i_{j}+2}\leq\cdots\leq a_{i_{j+1}-1}$, respectively, with
    $i_j\leq a_{x}\leq x-1$ for all $x\in[i_{j}+1,i_{j+1}-1]$.
    The number of list satisfying this condition is precisely the number of weakly increasing parking functions of length $i_{j+1}-1-(i_{j}+1)+1=i_{j+1}-i_{j}-1$. 
    The number of which is given by the Catalan number $\Cat_{i_{j+1}-i_{j}-1}$. 
    In a similar way, the preferences for cars $i_k+1,i_k+2,\ldots, n$ must also be weakly increasing and satisfy $i_k\leq a_x\leq x-1$ for all $x\in[i_k+1, n]$.
    The number of which is given by the Catalan number $\Cat_{n-i_k}$.
    The result follows from taking the product of the resulting Catalan numbers, which gives the formula in Equation~\eqref{eq:catalan product}.
\end{proof}
\begin{example}[Continuing Example \ref{ex: continuing n=5 lucky cars 1 and 4}]
Let $n=5$ and $I=\{1,4\}$.
Let $i_1=1$ and $i_2=4$. Then 
using the definition of $x_j$ as given in \Cref{thm:weakly increasing pfs with a fixed set of lucky cars}, observe that 
$x_1=4-1-1=2$ and $x_2=5-4=1$.
Then, by \Cref{thm:weakly increasing pfs with a fixed set of lucky cars}, the number of weakly increasing parking functions of length $5$ with lucky set $\{1,4\}$ is $\Cat_{2}\cdot \Cat_{1}=2\cdot 1=2$. 
One can readily confirm that the only weakly increasing parking functions of length $5$ with lucky set $I=\{1,4\}$  are $(1,1,1,4,4)$ and $(1,1,2,4,4)$.
\end{example}

\section{More spots than cars}\label{sec:more spots than cars}
    One common generalization of parking functions is the case where there are more parking spots than cars. 
    We let $\PF_{m,n}$ denote the set of $(m,n)$-parking functions with $m$ cars and $n$ spots, and we assume that $1\leq m\leq n$.
    Then the set $\PF_{m,n}$ consists of all $m$-tuples $\beta=(b_1,b_2,\dots, b_m)\in [n]^m$ whose weakly increasing rearrangement denoted $\beta^\uparrow=(b_1',b_2',\ldots, b_m')$ satisfies \begin{align}\label{inequality description of mn pfs}
        b_i'\leq n-m+i\mbox{ for all $i\in[m]$.}
    \end{align} 
 Konheim and Weiss established that $|\PF_{m,n}|=(n+1-m)(n+1)^{m-1}$, see\cite[Lemma 2]{konheim1966occupancy}. 
    
    We now consider the analogous question of how many parking functions in $\PF_{m,n}$ have a fixed set of lucky cars. In this case, the analysis only differs in that lucky cars have more options among the spots in which they might park. 
    Also among the possible outcomes, the only change is made by first  selecting an $m$-element subset of $[n]$ to index the location of where the $m$ cars park on the street, as the remaining spots would be empty. So the proof of some results are analogous to the case when $m=n$, in which case we omit their details, but we give their formal statements below. Whenever the argument differs substantially we provide a detailed proof.

This section is organized analogously to the case where $m=n$. That is, we begin by describing all of the possible lucky sets. We then  characterize the types of permutations that are outcomes of parking functions with a fixed set of lucky cars, and then we give a formula for the number of parking functions with a fixed set of lucky cars.

\subsection{Lucky sets for $(m,n)$-parking functions}
As before, given $\alpha\in\PF_{m,n}$, we let $\Lucky(\alpha)$ denote the set of lucky cars of $\alpha$. 
Given a subset $I\subset[m]$, we let \[\LPF_{m,n}(I)=\{\alpha\in\PF_{m,n}:\Lucky(\alpha)=I\}.\] 
Whenever $I\subseteq[m]$ is such that $\LPF_{m,n}(I)\neq\emptyset$, we say that $I$ is a lucky set of $\PF_{m,n}$.
We begin by characterizing and enumerating the lucky sets of $\PF_{m,n}$.

\begin{theorem}\label{thm:lucky sets of mn}
A subset $I\subseteq[m]$ is a lucky set of $\PF_{m,n}$ if and only if $1\in I$.
\end{theorem}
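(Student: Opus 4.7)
The plan is to mirror the proof of \Cref{thm:lucky sets} almost verbatim, since the construction used there already produces a parking function in $\PF_{m,n}$ with only minor additional bookkeeping. The inequality condition for $(m,n)$-parking functions is weaker than the one for $\PF_m$, so any witness for the $m=n$ case automatically works.

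For the forward direction ($\Rightarrow$), I would observe that the first car to enter the street parks in its preferred spot $a_1$ because all $n$ spots are empty at that moment; hence car $1$ is lucky for every $\alpha\in\PF_{m,n}$, which forces $1\in\Lucky(\alpha)=I$.

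For the reverse direction ($\Leftarrow$), assume $1\in I$ and write $I=\{i_1=1<i_2<\cdots<i_k\}\subseteq[m]$. I would construct a weakly increasing $\alpha=(a_1,a_2,\ldots,a_m)$ by the rule
\[
a_x=\begin{cases} x & \text{if } x\in I,\\ i_\ell & \text{if } i_\ell<x<i_{\ell+1} \text{ for some } 1\le \ell\le k-1,\\ i_k & \text{if } i_k<x\le m. \end{cases}
\]
By construction $\alpha$ is weakly increasing and satisfies $a_i\le i$ for all $i\in[m]$. Since $m\le n$, we have $a_i\le i\le n-m+i$, so the weakly increasing rearrangement $\alpha^{\uparrow}$ satisfies \Cref{inequality description of mn pfs} and thus $\alpha\in\PF_{m,n}$. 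A straightforward induction on $i$ shows that since $\alpha$ is weakly increasing and $a_i\le i$, car $i$ always parks in spot $i$: when car $i$ enters, spots $1,2,\ldots,i-1$ are occupied by the previous cars, so car $i$ is pushed past its preference $a_i\le i$ to the first empty spot, which is spot $i$. Therefore every $x\in I$ parks in spot $a_x=x$ and is lucky, while every $x\notin I$ has $a_x<x$ yet parks in spot $x$ and is unlucky. Hence $\Lucky(\alpha)=I$, as desired.

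There is no real obstacle here: the only subtlety is verifying that the inequality characterization of $(m,n)$-parking functions is satisfied by the witness, which follows immediately from $m\le n$ together with the bound $a_i\le i$ already established in the $m=n$ case.
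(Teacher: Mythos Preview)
Your proposal is correct and follows essentially the same approach as the paper, which simply states that the proof is analogous to \Cref{thm:lucky sets} and omits the details. Your added remark that $a_i\le i\le n-m+i$ verifies the $(m,n)$-parking function inequality is exactly the ``minor additional bookkeeping'' needed to make the analogy precise.
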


\begin{corollary}\label{cor:number lucky sets of mn}
For $m\geq 1$, there are $2^{m-1}$ distinct lucky sets of $\PF_{m,n}$.    
\end{corollary}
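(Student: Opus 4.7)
The plan is to imitate the proof of Corollary~\ref{cor:number lucky sets} essentially verbatim, invoking Theorem~\ref{thm:lucky sets of mn} in place of Theorem~\ref{thm:lucky sets}. Since Theorem~\ref{thm:lucky sets of mn} characterizes the lucky sets of $\PF_{m,n}$ as precisely those subsets $I\subseteq[m]$ with $1\in I$, counting such subsets is a one-line bijection with the power set of $\{2,3,\ldots,m\}$.

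More concretely, I would first recall Theorem~\ref{thm:lucky sets of mn} to reduce the problem to counting subsets of $[m]$ containing the element~$1$. Then I would observe that such subsets are in bijection with arbitrary subsets of $\{2,3,\ldots,m\}$ via $I\mapsto I\setminus\{1\}$, and since $|\{2,3,\ldots,m\}|=m-1$, there are exactly $2^{m-1}$ of them.

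There is no real obstacle: the entire content sits inside Theorem~\ref{thm:lucky sets of mn}, and the corollary is just a cardinality count. The only minor point worth noting is that the count depends solely on~$m$ (the number of cars) and not on~$n$ (the number of spots), which is consistent with the fact that lucky sets are indexed by cars, and the extra spots $n-m$ can always be absorbed by appending preferences of the form~$n$ (or by the explicit construction in the proof of Theorem~\ref{thm:lucky sets of mn}) without affecting which cars are lucky.
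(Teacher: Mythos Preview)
Your proposal is correct and matches the paper's approach exactly: the paper in fact omits the proof of this corollary, noting only that the argument is analogous to the $m=n$ case (Corollary~\ref{cor:number lucky sets}), which is precisely what you have written out. Your bijection with the power set of $\{2,\ldots,m\}$ is just a rephrasing of the ``each car $i>1$ is either lucky or not'' count used there.
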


\begin{corollary}
    If $I\subseteq[m]$ is a lucky set of $\PF_{m,n}$, then it is a lucky set of $\PF_{k,\ell}$ for all $k\geq m$ and $\ell \geq n$ with $\ell\geq k$.
\end{corollary}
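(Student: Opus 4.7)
The plan is to obtain the result as a direct consequence of Theorem~\ref{thm:lucky sets of mn}, the $(m,n)$-analog of Theorem~\ref{thm:lucky sets}. The argument has essentially three lines. Because $I\subseteq[m]$ is a lucky set of $\PF_{m,n}$, Theorem~\ref{thm:lucky sets of mn} gives $1\in I$. Since $k\geq m$, we have $I\subseteq[m]\subseteq[k]$, so $I$ is a subset of $[k]$ containing $1$. Because $\ell\geq k$, Theorem~\ref{thm:lucky sets of mn} applies to $\PF_{k,\ell}$ and certifies that $I$ is a lucky set of $\PF_{k,\ell}$.

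For readers who prefer an explicit witness parking function—mirroring the padding argument used in the analogous $(m=n)$ corollary—I would instead exhibit a concrete $\beta\in\PF_{k,\ell}$ with $\Lucky(\beta)=I$, imitating the sufficiency half of Theorem~\ref{thm:lucky sets}. Writing $I=\{i_1=1<i_2<\cdots<i_j\}\subseteq[k]$ and setting $i_{j+1}=k+1$ for convenience, I would define $\beta=(b_1,b_2,\ldots,b_k)\in[\ell]^k$ by $b_x=x$ when $x\in I$ and $b_x=i_r$ when $i_r<x<i_{r+1}$ with $x\notin I$. The list $\beta$ is weakly increasing with $b_x\leq x$, and since $\ell\geq k$ the $(k,\ell)$-parking-function inequality $b_x\leq\ell-k+x$ is immediate, so $\beta\in\PF_{k,\ell}$. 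A short induction on $x$ then shows car $x$ parks in spot $x$: if $x\in I$, spot $x$ is empty on arrival, so car $x$ is lucky; if $x\notin I$, the spots $i_r,i_r+1,\ldots,x-1$ are all occupied on arrival, pushing car $x$ forward to the empty spot $x$, where it is unlucky. Hence $\Lucky(\beta)=I$.

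The main subtlety, rather than a serious obstacle, is that the naive ``append copies of the last occupied spot of $\alpha$'' strategy from the $(m=n)$ corollary does not transfer verbatim once $m<n$: taking $\alpha=(n)\in\PF_{1,n}$ and $(k,\ell)=(n,n)$ leaves no room to the right of spot $n$ for extra unlucky cars, even though the target lucky set $\{1\}$ is clearly valid for $\PF_{n,n}$. Both routes above bypass this by working with the characterization of Theorem~\ref{thm:lucky sets of mn} or, equivalently, by building a fresh weakly increasing witness in $\PF_{k,\ell}$ from scratch rather than extending $\alpha$ in place.
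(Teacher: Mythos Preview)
Your proof is correct. The paper omits the proof of this corollary, pointing instead to the analogous $(m=n)$ corollary, whose argument appends $m-n$ copies of the value $n$ to a witness $\alpha$. You correctly observe that this padding argument does not transfer verbatim to the $(m,n)$ setting (your example $\alpha=(n)\in\PF_{1,n}$ with target $(k,\ell)=(n,n)$ is apt), and your remedy---invoking Theorem~\ref{thm:lucky sets of mn} directly, or equivalently rebuilding the weakly increasing witness from scratch---is the clean fix. In fact your first route shows that the hypothesis $\ell\geq n$ is superfluous: only $k\geq m$ and $\ell\geq k$ are used.
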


\subsection{Outcomes of $(m,n)$-parking functions with a fixed lucky set}\label{sec:outcomes in mn}

Next we define the outcome of a parking function when there are more parking spots than cars.

\begin{definition}\label{def:outcome for mn}
Let $\Sym_{m,n}$ denote the set of permutations of the multiset $\{\X,\ldots,\X\}\cup[m]$ with $\X$ having multiplicity $n-m$. Given a parking function $\alpha=(a_1,a_2,\ldots,a_m) \in \PF_{m,n}$, define the \textit{outcome} of $\alpha$ by \[\mathcal{O}(\alpha)=\pi_1\pi_2\cdots\pi_n\in \Sym_{m,n}\] where $\pi_i=j\in[m]$ denotes that car $j$ parked in spot $i$, and $\pi_i=\X$ indicates that spot $i$ remained vacant.
\end{definition}

For sake of simplicity in our arguments, we assume that $\X>i$ for any $i\in \mathbb{N}$. 
Moreover, to every element $\pi=\pi_1\pi_2\cdots\pi_n\in\Sym_{m,n}$ we will prepend $\pi_0=\X$.
We now give our definition for descents and descent bottoms for permutations in $\Sym_{m,n}$.
\begin{definition}\label{def:descent bottoms in mn}
Let $\pi_0=\X$ and $\pi_1\pi_2\cdots\pi_n\in\Sym_{m,n}$. 
Let $\pi=\pi_0\pi_1\pi_2\cdots\pi_n=\X\pi_1\pi_2\cdots\pi_n$.
\textit{Descent bottoms} of $\pi$ are values $\pi_i\in[m]$ for which $\pi_{i-1}=\X$ or,  $\pi_{i-1}\in[m]$ and $\pi_{i-1}>\pi_i$.
In these cases, $i$ is called a \textit{descent} of $\pi$.
The set
    \begin{align}
    \Des_{m,n}(\pi)&\coloneq\{i\in [n]: i \mbox{ is a descent of }\pi\in\Sym_{m,n}\},
    \intertext{
    is called the \textit{descent set} of $\pi$, and }
    \DB_{m,n}(\pi)&\coloneq\{\pi_i\in [n]: \pi_i \mbox{ is a descent bottom of }\pi\in\Sym_{m,n}\},\nonumber
\end{align}
is called the \textit{descent bottom set} of $\pi$.
\end{definition}
We illustrate the above definitions with the following example.
\begin{example}
Let $\alpha\in \PF_{5,7}$ and
$\pi=\pi_0\pi_1\pi_2\pi_3\pi_4\pi_5\pi_6\pi_7=\X453\X1\X2$. The descent bottoms of $\pi$ are
$\pi_1=4$, $\pi_3=3$, $\pi_5=1$, and $\pi_7=2$.
All of these cars must be in the lucky set of $\alpha$, as those cars would have found the spot immediately to their left vacant upon their attempt to park, either because that spot remained empty after all cars parked, or because the car that ultimately parked there had not yet entered the street to park. 
Hence, if $\out(\alpha)$ is to be equal to $\pi$, then $\{1,2,3,4\}\subseteq \Lucky(\alpha)$.
\end{example}

By using Definition \ref{def:descent bottoms in mn}, we can extend Lemma \ref{lem:descent bottoms are lucky} to the case where there are more spots than cars.

\begin{lemma}\label{lem:descent bottoms are lucky in mn}
Fix $\pi\in \Sym_{m,n}$. For any $\alpha\in \PF_{m,n}$ with outcome $\out(\alpha)=\pi$, with $\pi\in\Sym_{m,n}$, if $i\in \Des(\X\pi)$, then  $\pi_i\in \Lucky(\alpha)$. In other words,
$\DB_{m,n}(\pi)\subseteq\Lucky(\alpha)$.
\end{lemma}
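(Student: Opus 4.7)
The plan is to adapt the proof of \Cref{lem:descent bottoms are lucky} to accommodate the new type of descent that arises when $\pi_{i-1} = \X$. Fix $\alpha \in \PF_{m,n}$ with $\out(\alpha) = \pi = \pi_1\pi_2\cdots\pi_n$, and extend $\pi$ to $\X\pi = \pi_0\pi_1\cdots\pi_n$ as in \Cref{def:descent bottoms in mn}. Suppose $i$ is a descent of $\X\pi$; then $\pi_i \in [m]$ and one of two cases holds: either (a) $\pi_{i-1} = \X$, or (b) $\pi_{i-1} \in [m]$ with $\pi_{i-1} > \pi_i$. In either case, my goal is to show that at the moment car $\pi_i$ enters the street, spot $i-1$ is vacant, which will force car $\pi_i$ to have preference exactly $i$ and therefore to be lucky.

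For case (b), the argument from \Cref{lem:descent bottoms are lucky} carries over verbatim: because $\pi_{i-1} > \pi_i$, car $\pi_{i-1}$ has not yet arrived when car $\pi_i$ attempts to park, so spot $i-1$ is empty at that moment. If car $\pi_i$ had preference $a_{\pi_i}$ with $1 \leq a_{\pi_i} \leq i-1$, then among spots $a_{\pi_i}, a_{\pi_i}+1, \ldots, i-1$ it would find an empty spot (namely $i-1$) and park there, contradicting that $\pi_i$ parks in spot $i$. Hence $a_{\pi_i} = i$ and $\pi_i \in \Lucky(\alpha)$.

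For case (a), the symbol $\pi_{i-1} = \X$ means that spot $i-1$ is never occupied during the entire parking process. (When $i=1$, the convention $\pi_0 = \X$ makes this case trivial: any preference of car $\pi_1$ must satisfy $1 \leq a_{\pi_1} \leq n$, and since $\pi_1$ parks in spot $1$ with no prior cars on the street, we must have $a_{\pi_1} = 1$.) For $i > 1$, the same inequality argument applies: if car $\pi_i$ had preference $a_{\pi_i} \leq i-1$, then the permanently empty spot $i-1$ would intercept it, forcing car $\pi_i$ to park in some spot $\leq i-1$, again contradicting $\out(\alpha) = \pi$. Thus $a_{\pi_i} = i$, and $\pi_i \in \Lucky(\alpha)$.

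There is essentially no substantive obstacle beyond bookkeeping the two sub-cases generated by the extended definition of descent bottom; the core reasoning (an empty spot to the left means a lower preference would be intercepted) is identical to the $m = n$ setting. Collecting both cases yields $\DB_{m,n}(\pi) \subseteq \Lucky(\alpha)$, as claimed.
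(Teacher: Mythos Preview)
Your proof is correct and follows the same approach as the paper: both argue that, whether $\pi_{i-1}=\X$ or $\pi_{i-1}\in[m]$ with $\pi_{i-1}>\pi_i$, spot $i-1$ is vacant when car $\pi_i$ arrives, forcing $a_{\pi_i}=i$. The paper's own proof is even terser---it simply invokes \Cref{lem:descent bottoms are lucky} and remarks that it is immaterial which of the two cases holds---whereas you have spelled out both sub-cases explicitly; the substance is identical.
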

\begin{proof}
The proof of Lemma \ref{lem:descent bottoms are lucky in mn} is analogous to the proof of Lemma \ref{lem:descent bottoms are lucky}, as it is immaterial to car $\pi_i$ whether $\pi_{i-1}=\X$ or $\pi_{i-1}\in[m]$ since in either case car $\pi_i$ would find spot $i-1$ empty, and hence would park in spot $i$ and be a lucky car.
\end{proof}

As before, Lemma \ref{lem:descent bottoms are lucky in mn} establishes if $\alpha$ has outcome $\pi$, then all descent bottoms of $\pi$ are lucky cars of $\alpha$. 
We also provide an analogous result to Theorem \ref{thm:characterizing outcomes} to characterize which permutations in $\Sym_{m,n}$ are the outcomes of a parking function in $\PF_{m,n}$ with a fixed set of lucky cars.

\begin{theorem}\label{thm:characterizing outcomes in mn}
Fix a lucky set $I\subseteq[m]$  of $\PF_{m,n}$. Then $\pi=\pi_1\pi_2\cdots\pi_n\in \Sym_{m,n}$ is the outcome of $\alpha\in\LPF_n(I)$
if and only if $\X\pi=\X\pi_1\pi_2\ldots\pi_n$ satisfies the following:
\begin{enumerate}[leftmargin=.5in]
    \item if $\pi_{i-1}=\X$ and $\pi_i\in[m]$, then $\pi_i\in I$, 
    \item if $\pi_i\in I$ and $ \pi_{i+1} \notin I\cup\{\X\}$, then $\pi_i<\pi_{i+1}$,
    \item if 
    $\pi_{i-1},\pi_{i}\in[m]$ and 
    $\pi_{i-1}>\pi_{i}$, then $\pi_{i}\in I$. 
\end{enumerate}
\end{theorem}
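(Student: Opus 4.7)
The plan is to adapt the proof of \Cref{thm:characterizing outcomes}, with the $\X$ symbols and the prepended $\pi_0=\X$ (from \Cref{def:descent bottoms in mn}) requiring care in a few places.

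For the forward direction, assume $\out(\alpha)=\pi$ for some $\alpha\in\LPF_{m,n}(I)$ and verify $(1)$--$(3)$ in turn. Condition $(3)$ is exactly \Cref{lem:descent bottoms are lucky in mn}, and condition $(1)$ is its empty-gap analog: if $\pi_{i-1}=\X$ and $\pi_i\in[m]$, then when car $\pi_i$ arrives spot $i-1$ is still empty (spots only fill up, and $\pi_{i-1}=\X$ means spot $i-1$ is empty at the end), so for $\pi_i$ to park in spot $i$ its preference must be exactly $i$, forcing $\pi_i\in I$. Condition $(2)$ is the same contradiction as case $(2)$ of \Cref{thm:characterizing outcomes}: if $\pi_i\in I$, $\pi_{i+1}\in[m]\setminus I$, and $\pi_i>\pi_{i+1}$, then $\pi_{i+1}$ arrives first and finds spot $i$ empty (no car $k<\pi_{i+1}$ parks at $i$, since that spot is ultimately occupied by the larger car $\pi_i$), so with preference $\leq i$ it would park at some spot $\leq i$, contradicting $\out(\alpha)=\pi$.

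For the backward direction, given $\pi$ satisfying $(1)$--$(3)$, I would construct $\alpha=(a_1,\ldots,a_m)$ directly by setting $a_{\pi_i}=i$ whenever $\pi_i\in I$ and $a_{\pi_i}=i-1$ whenever $\pi_i\in[m]\setminus I$. This is well-defined: condition $(1)$ forces $\pi_{i-1}\in[m]$ (in particular $i\geq 2$) whenever $\pi_i\in[m]\setminus I$, and condition $(3)$ then gives $\pi_{i-1}<\pi_i$. The verification is an induction on car number $j=1,2,\ldots,m$ showing that car $j=\pi_i$ parks in spot $i$: when $j$ enters the street, by induction the occupied spots are exactly $\{\pi^{-1}(k):k<j\}$, so spot $i=\pi^{-1}(j)$ is free; if $\pi_i\in I$ then $a_j=i$ parks $j$ directly at $i$, while if $\pi_i\notin I$ then $a_j=i-1$ is blocked by the already-parked car $\pi_{i-1}<j$, bumping $j$ to spot $i$. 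The base case $j=1$ handles itself since the position $i_0$ with $\pi_{i_0}=1$ has either $\pi_{i_0-1}=\X$ or $\pi_{i_0-1}>1=\pi_{i_0}$, so condition $(1)$ or $(3)$ forces $1\in I$. This simultaneously yields $\out(\alpha)=\pi$, $\alpha\in\PF_{m,n}$, and $\Lucky(\alpha)=I$ (preference equals parking spot exactly for cars in $I$).

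The main obstacle is keeping this construction honest in the presence of arbitrary blocks of $\X$'s in $\pi$: the rule $a_{\pi_i}=i-1$ for an unlucky car is valid \emph{only} because condition $(1)$ rules out an $\X$ immediately to the left of any unlucky car, ensuring that the blocking car $\pi_{i-1}$ actually exists in $[m]$. Once that observation is in place, the empty spots play no active role in the inductive step beyond being skipped by the enumeration over $j\in[m]$, and the argument collapses to the same structure as in the $m=n$ case.
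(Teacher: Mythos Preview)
Your proof is correct and follows essentially the same approach as the paper. The forward direction matches the paper's reasoning condition by condition. For the backward direction, the paper constructs the very same $\alpha$ (phrased via the auxiliary permutation $\tau$ and the notation $\pi^{-1}$, but unwinding it gives exactly your rule $a_{\pi_i}=i$ for $\pi_i\in I$ and $a_{\pi_i}=i-1$ otherwise); the only organizational difference is that the paper verifies the three properties (that $\alpha\in\PF_{m,n}$, that $\Lucky(\alpha)=I$, and that $\out(\alpha)=\pi$) as three separate claims, invoking the inequality characterization \eqref{inequality description of mn pfs} for the first, whereas your single induction on the car number $j$ establishes all three simultaneously by directly simulating the parking process. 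This is a mild streamlining, not a genuinely different route.
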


\begin{proof}
Before we proceed with the technical details we make the following remarks regarding the items in the theorem statement:
\begin{enumerate}[leftmargin=.5in]
    \item The statement ``If $\pi_{i-1}=\X$ and $\pi_i\in[m]$, then $\pi_i\in I$'' means that a car $\pi_i$ parking after an empty spot must be lucky.
    \item The statement ``If $\pi_i\in I$ and $ \pi_{i+1} \notin I\cup\{\X\}$, then $\pi_i<\pi_{i+1}$'' means that anytime we have two consecutively parked cars, if the first  car is lucky and second is not, then there is an ascent in the outcome.
    \item The statement ``If 
    $\pi_{i-1},\pi_{i}\in[m]$ and 
    $\pi_{i-1}>\pi_{i}$, then $\pi_{i}\in I$'' means that if there are two consecutively parked cars  and there is a descent in the outcome, then second car is lucky. 
\end{enumerate}
We use these facts and proceed with the technical details next.\\

\noindent($\Rightarrow$) 
Suppose that $\pi=\pi_1\pi_2\cdots\pi_n=\out(\alpha)$ and that the lucky cars of $\alpha$  are those indexed by the set $I$.
Any car parking after an empty spot is lucky, so it must be in $I$, which is precisely Condition (1). Also whenever there are two consecutively parked cars $\pi_{i}$ and $\pi_{i+1}$, if $\pi_{i}$ is lucky and the second is not lucky, then car $\pi_{i+1}$ had to be in the queue later, and hence $\pi_{i}<\pi_{i+1}$. This is precisely the statement of Condition (2).
Moreover, whenever two consecutively parked cars $\pi_{i-1}$ and $\pi_i$ satisfy $\pi_{i-1}>\pi_i$, then car $\pi_i$ arrived and parked prior to car $\pi_{i-1}$ entering and parking, so it must be a lucky car. This is precisely Condition (3). 
So $\pi$ satisfies the given conditions.\\

\noindent ($\Leftarrow$) Assume $\pi=\pi_1\pi_2\cdots\pi_m\in\Sym_{m,n}$ and $\X\pi$ satisfies Conditions (1), (2), and (3).
We use the following notation:
\begin{itemize}[leftmargin=.5in]
    \item For all $i\in [n]$, $\pi_i=j$ means car $j$ parked in spot $i$, and if $\pi_i=\X$, then no car parks in spot $i$.
    \item Let $J=\{i\in [n]: \pi_i \in [m]\}.$ Further, order the elements of $J$ from smallest to largest so that $J=\{j_1<j_2<\cdots<j_m\}$. 
    For each $i\in[m]$, let $\tau_i=\pi_{j_i}$ and define
    \begin{align}
\tau=\tau_1\tau_2\cdots\tau_m.\label{eq:tau}
    \end{align}
    \item For any $u\in J$, define $\pi_v^{-1}=u$ which denotes that car $v$ parks in spot $u$ on the street.
\end{itemize} 
Next, we will construct a parking function $\alpha=(a_1,a_2,\ldots,a_m)\in \PF_{m,n}$, and we will prove that $\alpha$ satisfies $\out(\alpha)=\pi$ and $\Lucky(\alpha)=I$. 

To construct $\alpha$
let $\tau$ be as defined in \Cref{eq:tau}. For each $\tau_i=1,2,\ldots, m$ (in this order) set the value of $a_{j}$ where $j=\tau_i$ as follows:

\begin{enumerate}[leftmargin=.5in]
    \item If $\tau_i=j\in I$, set $a_{j}=\pi_{j}^{-1}$.\label{step:1}
    \item If $\tau_i=j\notin I$, set $a_{j}$ as follows:\label{step:2}
    \begin{enumerate}
        \item If $\tau_{i-1}\in I$ and $\tau_{i-1}<\tau_i$, then set $a_{j}=\pi_{k}^{-1}$ where $k=\tau_{i-1}$.
        \item If $\tau_{i-1}\in I$ and $\tau_{i-1}>\tau_i$, then in fact $\tau_{i}$ would be in $I$, a contradiction. Thus this case never happens.
        \item If $\tau_{i-1}\notin I$, then set $a_{j}=\pi_{k}^{-1}$ where $k=\tau_{i-1}$.
    \end{enumerate}
\end{enumerate}
We must now establish the following claims:
\setcounter{claim}{0}
    \begin{claim} As constructed, $\alpha$  is a parking function in $\PF_{m,n}$.
    \end{claim}
     \begin{proof}
We now show that the weakly increasing rearrangement of $\alpha$ satisfies  the inequality in \Cref{inequality description of mn pfs}.
    By construction, 
    for each $i\in[m]$, the entry $a_i$ in $\alpha$ is assigned at most the parking spot where car $i$ ends up parking. Namely, 
    \begin{align}    a_i\leq\pi_{i}^{-1}\label{eq:inq satisfied mn}
     \end{align}
     for all $i\in[m]$.
    Thus,
    the entries in $\alpha=(a_1,a_2,\ldots,a_m)$ are
    entry-wise less than or equal to the entries in $(\pi_{1}^{-1},\pi_{2}^{-1},\ldots,\pi_{m}^{-1})$. 
By definition of $J$, the set $\{\pi_{1}^{-1},\pi_{2}^{-1},\ldots,\pi_{m}^{-1}\}=J$, where $J$ consisted of the indices at which the cars parked given the outcome $\pi$.
    Now, reorder the entries $(\pi_{1}^{-1},\pi_{2}^{-1},\ldots\pi_{m}^{-1})$ in weakly increasing order and denote this by $(\pi_{1}^{-1},\pi_{2}^{-1},\ldots,\pi_{m}^{-1})^\uparrow$. 
    Again by definition of $J$, we have that 
    \[(\pi_{1}^{-1},\pi_{2}^{-1},\ldots,\pi_{m}^{-1})^\uparrow=j_1j_2j_3\cdots j_m.\]
    We now consider $(\pi_{1}^{-1},\pi_{2}^{-1},\ldots,\pi_{m}^{-1})^\uparrow$ as a parking function. We will prove that the $i$th entry in the tuple $(\pi_{1}^{-1},\pi_{2}^{-1},\ldots,\pi_{m}^{-1})^\uparrow$ is less than or equal to $n-m+i$, for all $i\in[m]$.
    This is equivalent to the claim that   
    $j_i\leq n-m+i$, for each $i\in[m]$. 
    Assume for contradiction that $ j_i\geq n-m+i+1$. 
    Then car $i$ parks at a  spot numbered $m-n+i+1$ or larger. 
    After car $i$ parks, there are at most $n-(n-m+i+2)+1=m-i-1$ spots remaining at the end of the street, yet there are $m-i$ cars left to park.
    Hence, some car would be unable to park.
    This is a contradiction because we started with $J$ being the set of indices at which cars parked with outcome $\pi$.
    
    Hence $j_i\leq n-m+i$ for all $i\in[m]$. 
    Then rearranging the entries of $\alpha$ into weakly increasing order, and denoting this by $\alpha^\uparrow=(a_1',a_2',\ldots,a_m')$, together with \Cref{eq:inq satisfied mn} ensures that 
    $a_i'\leq m-n+i$ for all $i\in[m]$. Thus $\alpha$ is a parking function in $\PF_{m,n}$.
    \end{proof}
    
    \begin{claim} The set of lucky cars of $\alpha$ is precisely $I$ i.e., $\Lucky(\alpha)=I$.
    \end{claim}
    \begin{proof}
        The only preferences used to create $\alpha$ are the values $\pi_{\tau_i}^{-1}$ when $\tau_i\in I$. 
        Moreover, all of the lucky cars in $I$, get the first instance of those preference as any unlucky car is given the preference of the nearest lucky car parked to its left, which makes those cars unlucky. This implies that the set of lucky cars in $\alpha$ is precisely~$I$.
    \end{proof}
    \begin{claim}The outcome of $\alpha$ is $\pi$  i.e., $\out{}(\alpha)=\pi$.
    \end{claim}
    \begin{proof}
Consider the first car parked on the street, by \Cref{eq:tau} this is denoted  $\tau_1=\pi_{j_1}$.
By definition of $J$, car $\tau_1=\pi_{j_1}$ occupies spot $j_1$ on the street.
Since the first parked car on the street is always lucky, the preference for car $\tau_1=\pi_{j_1}$ was set to be $a_{\tau_1}=\pi_{\tau_1}^{-1}$ in Step \eqref{step:1} above.
As car $\tau_1=\pi_{j_1}$ is lucky, then it must park in its preference $\pi_{\tau_1}^{-1}$. 
Whenever we write $\pi_x^{-1}=y$, it means that car $x$ parks in spot $y$. Thus $\pi_{\tau_1}^{-1}=j_1$
means that car $\tau_1=\pi_{j_1}$ parks in spot $j_1$.
This implies that $\pi_{j_1}=\tau_1$, as this denotes that car $\tau_1$ parked in spot $j_1$.
Note that this implies that $\pi_0=\pi_1=\pi_2=\cdots=\pi_{\tau_1-1}=\X$.

Assume for induction that for all $1<\ell <m$, the car $\tau_\ell$ (the $\ell$th car parked on the street),
with preference $a_{\tau_\ell}$ satisfies $\pi_{j_\ell}=\tau_\ell$.

Now consider the $m$th car. This is car $\tau_m$. 
The preference for car $\tau_m$ depends on whether the car was lucky or unlucky. We consider each case next.
\begin{itemize}[leftmargin=.2in]
    \item If car $\tau_m$ is lucky, then in Step \eqref{step:1} we set the preference $a_{\tau_m}=\pi_{\tau_m}^{-1}$. 
    Since car $\tau_m$ is the $m$th car, it has to park in the $m$th occupied spot on the street, which by definition of $J$, is numbered $j_m$.
    So $\pi_{\tau_m}^{-1}=j_m$, since whenever we write $\pi_x^{-1}=y$ means that car $x$ parked in spot $y$.
    Since car $\tau_m$ parked in spot $j_m$, we have that $\pi_{j_m}=\tau_{m}$, as desired.

    \item If car $\tau_m$ is unlucky, then in Step \eqref{step:2} its preference was set to be $a_{\tau_m}=\pi_{\tau_{m-1}}^{-1}$.
By induction hypothesis car $\tau_{m-1}$ satisfies $\pi_{j_{m-1}}=\tau_{m-1}$. Meaning that car $\tau_{m-1}$ parked in the spot numbered $j_{m-1}$. 
Car $\tau_m$ wants to park in spot $\pi_{\tau_{m-1}}^{-1}$. 
However, $\pi_{j_{m-1}}=\tau_{m-1}$ implies that 
$\pi_{\tau_{m-1}}^{-1}=j_{m-1}$.
This tells us that car $\tau_{m}$ will find its preferred spot occupied by car $\tau_{m-1}$. 
Thus, car $\tau_{m}$ will proceed to the next available spot, which by definition of $J$ must be $j_m$.
Therefore $\pi_{j_m}=\tau_m$. 
\end{itemize}
Thus all of the $m$ cars with preferences in $\alpha$ have outcome agreeing with the non-$\X$ entries in $\pi$.
Whenever $j\notin J$, then $\pi_j=\X$. 
Therefore $\out(\alpha)=\X\pi$ as claimed.
\end{proof}
\noindent This completes the proof.
\end{proof}

\subsection{Counting outcomes of $(m,n)$-parking functions where the first $k$ cars are lucky}

In Theorem \ref{thm:characterizing outcomes in mn} we gave a complete characterization for the permutations that arise as outcomes of parking functions with a fixed set of lucky cars. 
We now set some notation to describe those permutations.

\begin{definition}\label{def:set of possible outcomes for mn}

    Fix a lucky set $I$ of $\PF_{m,n}$.
    Let $\out_{m,n}(I)$ denote the set of outcomes arising from the parking functions in $\PF_{m,n}$ with lucky set $I$.
\end{definition}

We now give a formula for the number of outcomes where the first $k$ cars are lucky. This result generalizes \Cref{thm:number of outcomes with first k cars lucky} to the case where there are more spots than cars.
\begin{theorem}\label{thm:number of outcomes with first k cars lucky in mn}
     Let $I=\{1,2,3,\ldots,k\}\subseteq[m]$ be a lucky set of $\PF_{m,n}$, and let $J=\{j_1,j_2,j_3,\ldots,j_k\}\subseteq[n]$ with $1\leq j_1\leq n-m+1$.
Let $\mathrm{WComp}(m-k,k)$ denote the set of weak compositions of $m-k$ with $k$ parts.
Let $\mathrm{WComp}_J(m-k,k)$ be the subset of $\mathrm{WComp}(m-k,k)$ for which 
     $(t_1,t_2,\dots,t_k)\in \mathrm{WComp}(m-k,k)$ satisfies
     $(t_1,t_2,\dots,t_k)\leq (j_2-j_1+1,j_3-j_2+1,\ldots,n-j_k)$ point-wise.
     Then 
     \[|\out_{m,n}(I)|=\sum_{(t_1,t_2,t_3,\ldots,t_k)\in \mathrm{WComp}_J(m-k,k)}k!\binom{m-k}{t_1,t_2,t_3,\ldots,t_k}.\]
 \end{theorem}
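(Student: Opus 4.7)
The plan is to adapt the counting argument from \Cref{thm:number of outcomes with first k cars lucky} to the $(m,n)$ setting, carefully accounting for the $n-m$ empty spots. I would first fix the set $J=\{j_1<j_2<\cdots<j_k\}\subseteq[n]$ of positions where the $k$ lucky cars park. The constraint $j_1\leq n-m+1$ is forced because any car parking strictly to the left of $j_1$ would be the first occupant of its region and therefore lucky (by Condition (1) of \Cref{thm:characterizing outcomes in mn}), contradicting $j_1$ being the leftmost lucky position; hence spots $1,\ldots,j_1-1$ are all empty and $j_1-1\leq n-m$.

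Next I would use \Cref{thm:characterizing outcomes in mn} to organize the outcomes for a fixed $J$. The $k$ lucky cars $\{1,2,\ldots,k\}$ can be assigned to the $k$ positions of $J$ in any of the $k!$ orders (the inverse construction in the proof of \Cref{thm:characterizing outcomes in mn} realizes each such assignment). The unlucky cars $\{k+1,\ldots,m\}$ must then occupy the remaining used spots, but Condition (1) of \Cref{thm:characterizing outcomes in mn} forbids an unlucky car from sitting directly to the right of an empty spot. Consequently, the unlucky cars form contiguous blocks starting immediately after each lucky position. Writing $t_i$ for the number of unlucky cars filling the block beginning at position $j_i+1$, the tuple $(t_1,\ldots,t_k)$ is a weak composition of $m-k$ into $k$ parts bounded pointwise by the gap sizes ($j_{i+1}-j_i-1$ for $i<k$ and $n-j_k$ for $i=k$), i.e.\ $(t_1,\ldots,t_k)\in \mathrm{WComp}_J(m-k,k)$.

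For each admissible tuple, the multinomial coefficient $\binom{m-k}{t_1,\ldots,t_k}$ counts the selections of which unlucky cars fill each block. Inside a single block, the chosen cars must appear in increasing order from left to right, since any descent between consecutively parked cars would, via \Cref{lem:descent bottoms are lucky in mn}, force the descent bottom into $I$, which is impossible for an unlucky car. Multiplying by the $k!$ orderings of the lucky cars and summing over admissible $J$ and over $(t_1,\ldots,t_k)\in \mathrm{WComp}_J(m-k,k)$ then produces the claimed formula.

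The main obstacle I anticipate is bookkeeping: one must argue that this construction is both exhaustive and non-overcounting. Exhaustiveness follows from the $(\Rightarrow)$ direction of \Cref{thm:characterizing outcomes in mn}, which guarantees that every outcome in $\out_{m,n}(I)$ decomposes uniquely as a choice of lucky positions $J$, block sizes $(t_1,\ldots,t_k)$, a partition of the unlucky cars among the blocks, and an internal ordering forced to be increasing. Non-overcounting is transparent since distinct data yield distinct permutations in $\Sym_{m,n}$. Granted these structural facts from the already-proved characterization theorem, the enumeration reduces to the clean combinatorial count outlined above.
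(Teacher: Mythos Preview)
Your proposal is correct and follows essentially the same approach as the paper: fix the set $J$ of lucky positions, argue the bound on $j_1$, assign the lucky cars in $k!$ ways, place the unlucky cars in contiguous blocks to the right of each lucky position with sizes given by a bounded weak composition of $m-k$, and use the multinomial to count the distributions (with the increasing-order constraint inside each block killing any further choice). The only cosmetic differences are your justification of $j_1\le n-m+1$ via empty spots to the left (the paper uses a pigeonhole count of spots to the right) and your explicit mention of the sum over admissible $J$.
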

\begin{proof}
  Let $I=\{1,2,3,\ldots,k\}\subseteq[m]$ and fix a set of indices of parking spots $J=\{j_1,j_2,\ldots,j_k\}\subseteq[n]$, satisfying $j_1<j_2<\cdots<j_k\leq n$, in which the $k$ lucky cars in $I$ park. 
  The lucky cars in $I$ can park in any order among the spots indexed by $J$, so there are $k!$ ways for those cars to appear in an outcome permutation. 
  As those cars all park on the street first, we need to now park all of the remaining unlucky cars, of which there are $m-k$. These unlucky cars must park in the gaps between any two consecutively parked lucky cars. If $j_1>1$, then there is at least one empty spot at the beginning of the street. Any unlucky car must park to the right of the first lucky car on the street. Namely, any unlucky car must park past spot $j_1$, as otherwise the unlucky car would be lucky. 
  Moreover, we claim that $j_1\leq n-m+1$, which means that the first lucky car can park in any of the first $n-m+1$ spots on the street. Assume for contradiction that $j_1>n-m+1$.
  Then observe that there are $m-k$ (unlucky) cars left to park after the $k$ lucky cars park. 
  Now, the number of spots to the right of $j_1$ is $n-j_1+1-k$ and $n-j_1+1-k\leq n-(n-m+2)+1-k=m-k-1$. 
  This means that there are fewer spots to the right of $j_1$ than cars left to park. This is a contradiction because all unlucky cars must park to the right of any lucky car, and if $j_1>n-m+1 $ some unlucky car would fail to park. 
  Thus, $1\leq j_1 \leq n-m+1$, as claimed. 
  
  Next, the gaps between any two consecutively parked cars have size $j_{i+1}-j_{i}-1$ whenever $1\leq i\leq k-1$, and the gap at the end of the street (from the last parked lucky car in position $j_k$ to the end of the street) has length $n-j_k$.
  We must select what cars park in each respective gap. 
  Whenever we select cars to park in those gaps, the unlucky cars must park immediately to the right of a lucky car and must appear in increasing order.
  Moreover, the gaps are limited in size and may contain up to $ j_{i+1}-j_{i}+1$ cars where $1\leq i\leq k-1$, while the gap at the end of the street may contain up to $n-j_k$ cars.
  The number of cars we must park in these gaps must equal the number of unlucky cars, which is $m-k$. 
  As some gaps may not have any unlucky cars parked in them, selecting the number of cars to park in the gaps under these constraints can be accounted by $(t_1,t_2,\ldots,t_k)$, a weak composition of $m-k$ with $k$ parts which satisfies $(t_1,t_2,\dots,t_k)\leq (j_2-j_1+1,j_3-j_2+1,\ldots,n-j_k)$ point-wise. 
  Then, given the composition $(t_1,t_2,\dots,t_k)$,  
  the number of ways to select what cars park in each respective gap is given by $\binom{m-k}{t_1,t_2,\ldots,t_k}$.
  Each selection of cars parking in those gaps, must have the selected cars park in increasing order so as to ensure they create no descents in the outcome, which would imply some car not $I$ is lucky.
  
  Taking the sum over all possible restricted weak compositions in $\mathrm{WComp}_J(m-k,k)$, we find that the number of outcomes with lucky set $I=\{1,2,3,\ldots,k\}$ is given by 
\[|\out_{m,n}(I)|=\sum_{(t_1,t_2,t_3,\ldots,t_k)\in \mathrm{WComp}_J(m-k,k)}k!\binom{m-k}{t_1,t_2,t_3,\ldots,t_k},\]
  as claimed.
\end{proof}

We again remark that although we have a complete characterization for the elements of $\out_{m,n}(I)$ for any lucky set $I$, giving a closed formula for the cardinality of the set in general, remains an open problem.

\subsection{Counting $(m,n)$-parking functions with a fixed set of lucky cars}
If $\pi=\pi_1\pi_2\cdots\pi_{n}\in\Sym_{m,n}$ is an outcome of an $(m,n)$-parking function and car $\pi_i$ is an unlucky car, then there must be a car parked in spot $i-1$, namely $\pi_{i-1}\in[m]$. Moreover, car $\pi_{i-1}$ had to have arrived earlier than car $\pi_i$ to park. So $\pi_{i-1}<\pi_i$. Otherwise, if $\pi_{i-1}$ was an empty spot, then parking in spot $i$ would have only occurred if car $\pi_i$ had spot $i$ as its preference. This would make car $\pi_i$ a lucky car, contrary to the initial assumption.
Thus the count for the possible preferences of an unlucky car $\pi_i$ depends on the number of cars that parked contiguously to the left of spot $i$ by cars arriving to park before car $\pi_i$ entered the street to park. We define this number next.

\begin{definition}\label{def:number of cars arriving before me parking to the left of me mn version}
     Fix $\pi=\pi_1\pi_2\cdots\pi_n\in \Sym_{m,n}$. 
    For all $\pi_i\in[m]$ let 
    $\ell(\pi_i)$
    be the length of the  longest subsequence $\pi_j\pi_{j+1}\cdots\pi_{i-1}$ with $\pi_t<\pi_i$ for all $j\leq t\leq i-1$.
\end{definition}
We stress that $\ell(\pi_i)$ is only defined when $\pi_i\in[m]$ and not when $\pi_i=\X$, as the latter indicates an empty spot and not a car.

\begin{definition}
        Let $\pi=\pi_1\pi_2\cdots\pi_n\in\out_{m,n}(I)$.
    For each $i\in[n]$, 
    if $\pi_i\in[m]$, then 
    let $\Pref_I(\pi_i)$ denote the set of preferences of car $\pi_i$ in $\alpha\in\PF_{m,n}$ satisfying $\Lucky (\alpha)=I$ and whose outcome is $\pi$.
    Moreover, 
we let
\[\out^{-1}_{m,n}(I,\pi)=\{\alpha\in\PF_{m,n}:\Lucky(\alpha)=I\mbox{ and }\out(\alpha)=\pi\} 
    \]
    be the set of $(m,n)$-parking functions with lucky set $I$ and outcome $\pi$.
\end{definition}

We now state an analogous result to Lemma \ref{lem:possible prefs given an outcome} for counting $(m,n)$-parking functions with a fixed lucky set and a given outcome. 
As the proof is analogous to that of  \Cref{lem:possible prefs given an outcome}, hence we omit the details.

\begin{lemma}\label{lem:possible prefs given an outcome - mn version}
    Fix a lucky set $I\subseteq[m]$ of $\PF_{m,n}$ and fix $\pi=\pi_1\pi_2\cdots\pi_n\in\out_{m,n}(I)$. Then for each $\pi_i\in[m]$,
    \[|\Pref_I(\pi_i)| 
    =\begin{cases}
    1&\mbox{if $\pi_i\in I$}\\
    \ell(\pi_i)&\mbox{if $\pi_i\notin I$}
    \end{cases}\]
    and the number of possible $(m,n)$-parking functions $\alpha$ with outcome $\pi$ and lucky set $I$ is equal to
\[|\out^{-1}_{m,n}(I,\pi)|=\prod_{i=1}^n |\Pref_{I}(\pi_i)|=\prod_{\pi_j\notin I\cup\{\X\}}\ell(\pi_j).\]
\end{lemma}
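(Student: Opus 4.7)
The plan is to mirror the proof of \Cref{lem:possible prefs given an outcome}, while being careful about the new feature that outcomes in $\Sym_{m,n}$ can contain the empty-spot symbol $\X$. I would fix an arbitrary $\alpha=(a_1,\ldots,a_m)\in\PF_{m,n}$ with $\Lucky(\alpha)=I$ and $\out(\alpha)=\pi$, and analyze, for each index $i\in[n]$ with $\pi_i\in[m]$, how many preferences $a_{\pi_i}$ are compatible with this outcome.

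First, I would handle the lucky case: if $\pi_i\in I$, then car $\pi_i$ parks in spot $i$ as a lucky car, so it must have preferred spot $i$ itself. This gives $|\Pref_I(\pi_i)|=1$, and this agrees with the convention in the formula. Next, for the unlucky case $\pi_i\notin I$, I would first invoke \Cref{thm:characterizing outcomes in mn}(1) to rule out $\pi_{i-1}=\X$: an unlucky car parking just after an empty spot would be forced to be lucky, contradicting $\pi_i\notin I$. Therefore $\pi_{i-1}\in[m]$. Now let $\pi_j\pi_{j+1}\cdots\pi_{i-1}$ be the longest contiguous block of entries in $[m]$ to the immediate left of $\pi_i$ whose values are all less than $\pi_i$; this is exactly the block counted by $\ell(\pi_i)$. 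Each entry $\pi_t$ in this block corresponds to a car that arrived and parked before car $\pi_i$ (because $\pi_t<\pi_i$), and those are precisely the spots $t\in\{j,j+1,\ldots,i-1\}$ that car $\pi_i$ would find occupied upon entering the street. Any preference $a_{\pi_i}\in\{j,\ldots,i-1\}$ sends car $\pi_i$ forward past occupied spots to land in spot $i$ as an unlucky car, and conversely any preference outside this block either produces a different outcome (if $a_{\pi_i}\ge i$ or $a_{\pi_i}\le j-1$ with spot $j-1$ empty/earlier-occupied, car $\pi_i$ would park earlier or in the wrong spot) or makes car $\pi_i$ lucky. Hence $|\Pref_I(\pi_i)|=\ell(\pi_i)$.

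Finally, I would observe that the preferences for distinct cars are independent: assigning a compatible preference to each car individually determines exactly one parking function $\alpha$ with outcome $\pi$ and lucky set $I$, and every such $\alpha$ arises this way. Thus
\[
|\out^{-1}_{m,n}(I,\pi)|
=\prod_{\pi_i\in[m]}|\Pref_I(\pi_i)|
=\Bigl(\prod_{\pi_i\in I}1\Bigr)\cdot\Bigl(\prod_{\pi_j\in[m]\setminus I}\ell(\pi_j)\Bigr)
=\prod_{\pi_j\notin I\cup\{\X\}}\ell(\pi_j),
\]
where the index condition $\pi_j\notin I\cup\{\X\}$ simply records that we restrict to unlucky cars (lucky cars contribute a factor $1$, and $\X$ entries are not cars at all).

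The only step that differs meaningfully from the $m=n$ case is the exclusion of $\pi_{i-1}=\X$ for unlucky $\pi_i$, which I would expect to be the main technical subtlety; once that is in place, the longest-subsequence characterization of admissible preferences is identical to the original argument, and the product formula follows from independence.
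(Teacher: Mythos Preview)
Your proposal is correct and follows essentially the same approach as the paper. The paper in fact omits the proof entirely, stating only that it is analogous to \Cref{lem:possible prefs given an outcome}; the one new subtlety you identify---that $\pi_{i-1}\neq\X$ when $\pi_i$ is unlucky---is exactly what the paper addresses in the paragraph immediately preceding the lemma statement, and your invocation of \Cref{thm:characterizing outcomes in mn}(1) handles it cleanly.
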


We now give a formula for the number of $(m,n)$-parking functions with a fixed set of lucky cars.

\begin{theorem}\label{thm:count pfs with fixed lucky set - mn version}
Fix a lucky set $I\subseteq[m]$ of $\PF_{m,n}$.
If $\LPF_{m,n}(I)$ denotes the set of $(m,n)$-parking functions $\alpha$ with $\Lucky(\alpha)=I$, then 
\begin{align}
|\LPF_{m,n}(I)|=
\sum_{\pi=\pi_1\pi_2\cdots\pi_n\in \out_{m,n}(I)}
\left(\prod_{\pi_j\notin I\cup\{\X\}} \ell (\pi_j)\right).\label{eq:pf count with fixed lucky set - mn version}
\end{align}
\end{theorem}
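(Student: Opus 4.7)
The plan is to prove this by partitioning $\LPF_{m,n}(I)$ according to the outcome map $\out\colon\LPF_{m,n}(I)\to\out_{m,n}(I)$ and then summing the sizes of the fibers, each of which has already been computed in \Cref{lem:possible prefs given an outcome - mn version}. Concretely, I would first note that the map sending $\alpha\in\LPF_{m,n}(I)$ to $\out(\alpha)$ is a well-defined function into $\out_{m,n}(I)$, and that by \Cref{def:set of possible outcomes for mn} every $\pi\in\out_{m,n}(I)$ arises as $\out(\alpha)$ for at least one $\alpha\in\LPF_{m,n}(I)$. Hence
\[
\LPF_{m,n}(I)=\bigsqcup_{\pi\in\out_{m,n}(I)}\out^{-1}_{m,n}(I,\pi),
\]
so by the additivity of cardinality,
\[
|\LPF_{m,n}(I)|=\sum_{\pi\in\out_{m,n}(I)}|\out^{-1}_{m,n}(I,\pi)|.
\]

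Next, I would apply \Cref{lem:possible prefs given an outcome - mn version}, which asserts that for every $\pi=\pi_1\pi_2\cdots\pi_n\in\out_{m,n}(I)$,
\[
|\out^{-1}_{m,n}(I,\pi)|=\prod_{\pi_j\notin I\cup\{\X\}}\ell(\pi_j).
\]
Substituting this expression into the sum above yields
\[
|\LPF_{m,n}(I)|=\sum_{\pi=\pi_1\pi_2\cdots\pi_n\in\out_{m,n}(I)}\left(\prod_{\pi_j\notin I\cup\{\X\}}\ell(\pi_j)\right),
\]
which is \Cref{eq:pf count with fixed lucky set - mn version}.

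The proof is essentially a bookkeeping argument, and no new combinatorial obstacle arises: all of the real work was carried out in \Cref{thm:characterizing outcomes in mn}, which certifies which $\pi$ can appear as an outcome, and in \Cref{lem:possible prefs given an outcome - mn version}, which tallies the preference choices for unlucky cars once the outcome is fixed. The only point to be careful about is ensuring that the fibers $\out^{-1}_{m,n}(I,\pi)$ are genuinely disjoint as $\pi$ ranges over $\out_{m,n}(I)$; this is immediate since each $\alpha$ has a single outcome $\out(\alpha)$, so no $\alpha$ is counted twice in the sum. Consequently, the argument mirrors the proof of \Cref{thm:count pfs with fixed lucky set} in the $m=n$ setting, with $I\cup\{\X\}$ in place of $I$ to account for the $n-m$ empty spots that contribute no preference choices.
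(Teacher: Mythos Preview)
Your proof is correct and follows essentially the same approach as the paper: partition $\LPF_{m,n}(I)$ by outcome, invoke \Cref{lem:possible prefs given an outcome - mn version} to compute each fiber size, and sum over $\out_{m,n}(I)$. Your write-up is in fact slightly more explicit than the paper's about the disjointness of the fibers, but the argument is the same.
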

\begin{proof}
Fix a lucky set $I\subseteq[m]$  of $\PF_{m,n}$.
By Lemma \ref{lem:possible prefs given an outcome - mn version}, for any $\pi=\pi_1\pi_2\cdots\pi_n\in \out_{m,n}(I)$, the number of $(m,n)$-parking functions $\alpha\in \LPF_{m,n}(I)$ with outcome $\pi$ is given by $\prod_{\pi_j\notin I\cup\{\X\}}\ell(\pi_j)$.
To account for all parking functions with lucky set $I$, we must sum over all permutations  $\pi$ arising 
as the outcomes of such parking functions. 
Those are the permutations characterized in 
Theorem \ref{thm:characterizing outcomes in mn} and that set is denoted by $\out_{m,n}(I)$.
This completes the proof.
\end{proof}

\subsection{Weakly increasing $(m,n)$-parking functions with a fixed set of lucky cars}

In this section, we let $\PF_{m,n}^{\uparrow}$ denote the set of parking functions $\alpha=(a_1,a_2,\ldots,a_m)\in \PF_{m,n}$ which are weakly increasing i.e., $a_i\leq a_{i+1}$ for all $i\in[m-1]$.
Theorem~\ref{thm:lucky sets of mn} establishes that any subset of $[m]$ including $1$ is a lucky set for $\PF_{m,n}^{\uparrow}$. We state this formally next.
\begin{lemma}
    A subset $I\subseteq[m]$ is a lucky set of $\PF_{m,n}^{\uparrow}$ if and only if $1\in I$.
\end{lemma}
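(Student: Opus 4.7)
The plan is to prove both directions of the equivalence, mirroring closely the proof of Theorem~\ref{thm:lucky sets} while simply observing that the added vacant spots at the end of the street do not interfere with luckiness. For the forward direction, if $I$ is a lucky set of $\PF_{m,n}^{\uparrow}$, then there exists $\alpha \in \PF_{m,n}^{\uparrow}$ with $\Lucky(\alpha) = I$. Since the first car to enter always finds the street empty and parks in its preferred spot, it is lucky, forcing $1 \in I$.

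For the backward direction, assume $I = \{i_1 = 1 < i_2 < \cdots < i_k\} \subseteq [m]$. I would explicitly construct
\[
\alpha = (a_1, a_2, \ldots, a_m), \qquad a_x = \begin{cases} x & \text{if } x \in I, \\ i_\ell & \text{if } i_\ell < x < i_{\ell+1},\end{cases}
\]
using the convention $i_{k+1} = m+1$. By construction, $\alpha$ is weakly increasing and satisfies $a_x \leq x$, and since $n \geq m$ we also have $a_x \leq x \leq n - m + x$, so $\alpha \in \PF_{m,n}^{\uparrow}$.

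It remains to verify that $\Lucky(\alpha) = I$. Since $\alpha$ is weakly increasing and $a_x \leq x$ for each $x$, I claim car $x$ parks in spot $x$. This follows inductively: car $1$ has preference $a_1 = 1$ and parks in spot $1$; assuming cars $1, 2, \ldots, x-1$ park in spots $1, 2, \ldots, x-1$, car $x$ with preference $a_x \leq x$ encounters occupied spots up through $x-1$ and parks in spot $x$. Now, if $x \in I$, then $a_x = x$, so car $x$ is lucky; and if $x \notin I$, then $a_x = i_\ell < x$, so car $x$ is unlucky. Hence $\Lucky(\alpha) = I$, and $I$ is a lucky set of $\PF_{m,n}^{\uparrow}$.

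The argument presents no real obstacle; the only subtle point is to check that allowing $n - m$ extra empty spots at the tail does not change the parking dynamics or promote any unlucky car to lucky. This is immediate here because every car already finds its eventual spot within $[m]$, so the tail spots $m+1, \ldots, n$ remain vacant and play no role in the parking process.
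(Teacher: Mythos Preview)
Your proof is correct and takes essentially the same approach as the paper: the paper does not give a standalone proof of this lemma but simply remarks that the construction from Theorem~\ref{thm:lucky sets} (the same weakly increasing $\alpha$ you build) already produces a witness in $\PF_{m,n}^{\uparrow}$, and you have spelled out that argument in full, including the check that $a_x \leq x \leq n-m+x$.
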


For a fixed lucky set $I$, let \[\LPF_{m,n}^{\uparrow}(I)=\{\alpha\in\PF_{m,n}^{\uparrow}:\Lucky(\alpha)=I\}, \]
which is the set of all weakly increasing $(m,n)$-parking functions with lucky set $I$.
Let 
\[\out_{m,n}^\uparrow(I)=\{\pi\in \Sym_{m,n}: \out(\alpha)=\pi\mbox{ for some }\alpha\in \LPF_{m,n}^{\uparrow}(I)\}\] denote the set of outcomes arising from the parking functions in $\PF_{m,n}^\uparrow$ with lucky set $I$.

We begin with an example where we construct the possible outcomes of a weakly increasing $(m,n)$-parking function with a fixed set of lucky cars.
\begin{example}\label{ex:outcomes mn}
    Let $m=7$ and $n=10$, and $I=\{1,4,5\}$.
    Since $\alpha\in\PF_{7,10}^\uparrow$, the non-$\X$ entries in $\out(\alpha)$ will appear in increasing order. 
    So we may begin with the identity permutation in $\Sym_7$ and introduce three $\X$'s ensuring that the $\X$'s are only placed to the left of a lucky car  or at the end of the street. This placement of $\X$'s ensures that no other car is forced to be lucky. 
    Hence, the only outcomes that are possible are
    \[
    \out_{m,n}^\uparrow(I)=\left\{
    \begin{tabular}{ccccc}
         $\X\X\X1234567$,&
         $\X\X123\X4567$,&
         $\X\X1234\X567$,&
         $\X\X1234567\X$,&
         $\X123\X\X4567$,\\
         $\X123\X4\X567$,&
         $\X123\X4567\X$,&
         $\X1234\X\X567$,&
         $\X1234\X567\X$,&
         $\X1234567\X\X$,\\
         $123\X\X\X4567$,&
         $123\X\X4\X567$,&
         $123\X\X4567\X$,&
         $123\X4\X\X567$,&
         $123\X4\X567\X$,\\
         $123\X4567\X\X$,&
         $1234\X\X\X567$,&
         $1234\X\X567\X$,&
         $1234\X567\X\X$,&
         $1234567\X\X\X$.
         \end{tabular}
    \right\}.
    \]
    To count these outcomes it suffices to know the number of weak compositions of $n-m=3$ into $|I|+1=4$ parts. This is given by $\binom{n-m+|I|}{n-m}=\binom{6}{3}=20$, which is the number of outcomes listed above.
\end{example}

\Cref{ex:outcomes mn} illustrates the proof technique we use in establishing the following result.
\begin{lemma}\label{lem:number of outcomes in weakly inc pfs mn}

        Fix a lucky set $I\subseteq[m]$ of $\PF_{m,n}^\uparrow$.
    Then the number of permutations in $\Sym_{m,n}$ that are outcomes of parking functions in $\PF_{m,n}^\uparrow$ with lucky set $I$ is given by 
\[|\out_{m,n}^\uparrow(I)|=\binom{n-m+|I|}{n-m}.\]
\end{lemma}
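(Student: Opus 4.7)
The plan is to combine Theorem~\ref{thm:characterizing outcomes in mn} with a simple stars-and-bars argument. First, observe that if $\alpha \in \PF_{m,n}^{\uparrow}$, the cars park in order (car~$i$ parks in the $i$th occupied spot on the street), so the non-$\X$ entries of $\out(\alpha)$ read $1,2,\ldots,m$ from left to right. Consequently, producing an element $\pi \in \out_{m,n}^{\uparrow}(I)$ is equivalent to choosing how to insert $n-m$ indistinguishable $\X$'s into the string $1\,2\,\cdots\,m$ (including positions before the first entry and after the last).

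Next, I would argue that among the three conditions in Theorem~\ref{thm:characterizing outcomes in mn}, only condition (1) is nontrivial here. Conditions (2) and (3) involve two consecutive entries both in $[m]$; since the non-$\X$ entries of $\pi$ are $1,2,\ldots,m$ in increasing order, whenever $\pi_{i-1},\pi_i \in [m]$ we automatically have $\pi_{i-1}<\pi_i$, so these conditions are vacuous. Condition (1), however, forces any entry that directly follows an $\X$ to be in $I$ (or itself be an $\X$, which we allow by prepending $\pi_0 = \X$). Equivalently, every maximal run of $\X$'s must either sit at the very end of $\pi$ or be immediately followed by a car in $I$.

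I would then translate this into a counting problem by setting up $|I|+1$ ``insertion slots'': one slot immediately before each lucky car $i \in I$ (there are $|I|$ such slots, including the slot before car $1$ since $1 \in I$) and one final slot after car $m$. The gaps immediately preceding unlucky cars must remain empty, because an $\X$ placed there would violate condition (1). Any nonnegative number of $\X$'s can be placed in each of the allowed $|I|+1$ slots, and the total must equal $n-m$.

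Therefore $|\out_{m,n}^{\uparrow}(I)|$ equals the number of weak compositions of $n-m$ with $|I|+1$ parts, which is
\[
\binom{(n-m)+(|I|+1)-1}{n-m} = \binom{n-m+|I|}{n-m},
\]
as claimed. The only mild subtlety to double-check is that every such insertion actually arises from some $\alpha \in \LPF_{m,n}^{\uparrow}(I)$, but Theorem~\ref{thm:characterizing outcomes in mn} gives this for free: each $\pi$ satisfying (1)--(3) is realized by some parking function with lucky set $I$, and the explicit construction in its proof produces one with weakly increasing preferences when the non-$\X$ entries of $\pi$ are already in increasing order. One can verify this against Example~\ref{ex:outcomes mn}, where $\binom{6}{3}=20$ matches the listed outcomes.
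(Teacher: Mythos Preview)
Your proof is correct and follows essentially the same approach as the paper: both reduce to inserting the $n-m$ copies of $\X$ into the $|I|+1$ allowable slots (immediately before each lucky car and at the end of the street) and then apply the standard weak-composition count. Your version is slightly more careful in that you explicitly verify conditions (2) and (3) of Theorem~\ref{thm:characterizing outcomes in mn} are vacuous and that the realizing parking function from that theorem's construction is weakly increasing, whereas the paper leaves these checks implicit.
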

\begin{proof}
    Let $\alpha\in\PF_{m,n}^{\uparrow}$ and let $\out(\alpha)=\pi_1\pi_2\cdots\pi_n\in\Sym_{m,n}$.  
    As $\alpha$ is weakly increasing, the entries in $[m]$ appear in strictly increasing order in $\out(\alpha)$.
    By Theorem \ref{thm:characterizing outcomes in mn}, entries in $\out(\alpha)$ which are $\X$ impact the lucky set. So we may insert $\X$ into the identity permutation $123\cdots m\in\Sym_m$ only immediately to the left of a lucky car or to the right of $m$. 
    As there are $|I|+1$ places where we can insert $n-m$ entries $\X$ in $123\cdots m\in\Sym_m$, this is equivalent to counting the number of weak compositions of $n-m$ into $|I|+1$ parts. It is well-known that the number of weak compositions of $n-m$ into $|I|+1$ parts is given by $\binom{n-m+|I|+1-1}{n-m}=\binom{n-m+|I|}{n-m}$, as claimed.
\end{proof}

\Cref{lem:number of outcomes in weakly inc pfs mn} gives the number of permutations that are outcomes of weakly increasing $(m,n)$-parking functions with a fixed set of lucky cars. We now want to characterize those $(m,n)$-parking functions directly and to do so we introduce the following definitions and results.

\begin{definition}\label{def:weak composition}
    Let $\pi=\pi_1\pi_2\cdots\pi_n\in\Sym_{m,n}$ be an outcome of a weakly increasing parking function in $\PF_{m,n}^\uparrow$ with lucky set $I$. Hence $\pi_j=\X$ for  $n-m$ distinct values of $j\in[n]$. Partition $\pi$ (from left-to-right) into subwords $w_1,w_2,w_3,\ldots,w_k$ where either $w_i=\X$ or $w_i$ consists of a maximally long increasing run of positive integers from the set $[m]$. 
    Whenever $w_i=\X$, let $c_i=0$, and for each $i\in[k]$ with $w_i\neq \X$, let $c_i$ denote the length of the word $w_i$.
    We call 
     $\textbf{c}(\pi)=(c_1,c_2,\ldots,c_k)$ the \textit{weak composition associated to $\pi$}. 
\end{definition}

In \Cref{def:weak composition}, we use the naming convention of ``weak composition'' since for any $\textbf{c}(\pi)=(c_1,c_2,\ldots,c_k)$, we have that 
 $\sum_{i=1}^kc_i=m$ and $n-m$ of the terms in the sum are zero. Thus $\textbf{c}(\pi)$ is a weak composition of~$m$. We illustrate the definition with the next example.

\begin{example}\label{ex:pi to comp and back}
Let $m=7$, $n=10$, and $I=\{1,4,5\}$.
Consider $\pi=\X123\X4\X567\in\Sym_{7,10}$. Then 
the weak composition associated to $\pi$ is given by $\textbf{c}(\pi)=(0,3,0,1,0,3)$.
If we had been given the weak composition $(0,3,0,1,0,3)$, we can recover $\pi$
as follows: 
\begin{itemize}[leftmargin=.2in]
    \item Since  $c_1=0$, we let $w_1=\X$.
    \item Since $c_2=3$, we use the smallest $c_2=3$ integers in $[7]$ and place them in increasing order to construct $w_2=123$.
    \item Since $c_3=0$, we let $w_3=\X$.
    \item Since $c_4=1$,  we then use the next $c_4=1$ smallest integers in $\{4,5,6,7\}=[7]\setminus[\sum_{i=1}^{3}c_i]$ to construct $w_4=4$. 
    \item Since $c_5=0$, we let $w_5=\X$.
    \item Since $c_6=3$, we use the next smallest $c_6=3$ integers in $\{5,6,7\}=[7]\setminus[\sum_{i=1}^{5}c_i]$ to construct $w_6=567$.
\end{itemize}
Then $\pi=w_1w_2\cdots w_6=\X123\X4\X567$, which is the permutation we started with.
\end{example}

From \Cref{def:weak composition}, given an outcome $\pi$ we can uniquely determine the weak composition associated to~$\pi$.
As \Cref{ex:pi to comp and back} illustrates, the converse is also true and we prove this next.

\begin{lemma}\label{lem:wc to perm}
    Let $1\leq m\leq n$. A weak composition $\textbf{c}=(c_1,c_2,\ldots,c_k)$ of $m$ with $n-m$ zeros, uniquely determines an outcome permutation $\pi\in\Sym_{m,n}$ of a weakly increasing $(m,n)$-parking function. 
\end{lemma}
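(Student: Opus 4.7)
The plan is to explicitly reverse the partitioning procedure of \Cref{def:weak composition}. Given a weak composition $\textbf{c}=(c_1,c_2,\ldots,c_k)$ of $m$ with exactly $n-m$ zero parts, I would set $s_i = c_1 + c_2 + \cdots + c_{i-1}$ and define subwords $w_1,w_2,\ldots,w_k$ by letting $w_i = \X$ whenever $c_i = 0$ and $w_i = (s_i{+}1)(s_i{+}2)\cdots(s_i{+}c_i)$ whenever $c_i > 0$. Concatenating these gives a word $\pi = w_1 w_2 \cdots w_k$. The first check is that $\pi \in \Sym_{m,n}$: since $\sum_{i=1}^k c_i = m$, each integer of $[m]$ appears exactly once among the non-$\X$ entries, and the $n-m$ zero parts contribute exactly $n-m$ copies of $\X$, so $\pi$ has length $n$. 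The construction is forced at every step, so $\pi$ is uniquely determined by $\textbf{c}$.

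Next, I would exhibit a weakly increasing parking function $\alpha \in \PF_{m,n}^{\uparrow}$ with $\out(\alpha) = \pi$. For each $i$ with $c_i > 0$, let $p_i = 1 + \sum_{j < i} |w_j|$ be the position in $\pi$ of the leftmost entry of $w_i$, where $|w_j|=1$ if $c_j=0$ and $|w_j|=c_j$ otherwise. Define $\alpha = (a_1,a_2,\ldots,a_m)$ by setting $a_{s_i + j} = p_i$ for every $j \in \{1,2,\ldots,c_i\}$ and every non-$\X$ block $w_i$. Cars within the same block share a common preference, and for indices $i < i'$ with $c_i, c_{i'} > 0$ we have $p_i < p_{i'}$; therefore $\alpha$ is weakly increasing. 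A direct simulation of the parking process, inducting on $i$, shows that car $s_i + j$ finds the spots $p_i, p_i + 1, \ldots, p_i + j - 2$ occupied by the preceding cars of block $w_i$ and all earlier blocks already parked in spots strictly less than $p_i$, so car $s_i + j$ parks in spot $p_i + j - 1$. This matches exactly the location of $s_i + j$ in $\pi$, giving $\out(\alpha) = \pi$, and the $(m,n)$-parking-function inequality follows automatically because every car parks in some spot of $[n]$.

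The main obstacle is the parking simulation: one must confirm that each block $w_i$ finds its preferred spot $p_i$ empty when its first car arrives. This reduces to verifying $p_{i'} + c_{i'} - 1 < p_i$ for every $i' < i$ with $c_{i'} > 0$, which is immediate from the definition $p_i = 1 + \sum_{j < i} |w_j|$ and the observation that $p_{i'} + |w_{i'}| \leq p_i$. Together with the bijective construction above, this establishes that every weak composition of $m$ with $n-m$ zero parts uniquely determines such an outcome $\pi$, completing the proof.
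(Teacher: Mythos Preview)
Your construction of $\pi$ from $\textbf{c}$ is exactly the paper's: set $w_i=\X$ when $c_i=0$ and $w_i=(s_i{+}1)\cdots(s_i{+}c_i)$ when $c_i>0$, then concatenate. The verification that $\pi\in\Sym_{m,n}$ and that the construction is forced is identical.

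Where you differ is in justifying that $\pi$ actually arises as an outcome. The paper simply observes that the non-$\X$ entries of $\pi$ appear in increasing order and asserts that this makes $\pi$ an outcome of some weakly increasing $(m,n)$-parking function. You instead explicitly construct $\alpha$ by assigning every car in block $w_i$ the preference $p_i$ (the starting position of that block), check that $\alpha$ is weakly increasing, and run the parking simulation. Your approach is more self-contained and buys a concrete witness $\alpha$; the paper's is shorter but leans on the reader to supply such a witness. Both are correct, and your extra step removes any possible doubt about the ``this shows $\pi$ is an outcome'' assertion.
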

\begin{proof}
    Consider the  weak composition $\textbf{c}=(c_1,c_2,\ldots,c_k)$ of $m$ with $n-m$ zeros. 
    We now construct a set of subwords $w_1,w_2,\ldots,w_k$ from the entries in $\textbf{c}$ and we use those subwords from which we will construct a permutation in $\Sym_{m,n}$. This construction is as follows: For each $i\in[k]$,
\begin{itemize}[leftmargin=.65in]
    \item[Step (1):]  If $c_i=0$,  then set $w_i=\X$.
    \item[Step (2):] If  $c_i>0$, then set
    \[w_{i}=\left(\left(\sum_{j=1}^{i-1}c_{j}\right)+1\right)\left(\left(\sum_{j=1}^{i-1}c_{j}\right)+2\right)\cdots\left(\left(\sum_{j=1}^{i-1}c_{j}\right)+c_i\right),\] which consists of the consecutive integers $\left(\sum_{j=1}^{i-1}c_{j}\right)+y$, with $1\leq y\leq c_i$ listed in increasing order. Moreover,  
    if the top index of the sum is larger than the initial index, then the sum is zero.
\end{itemize}
Then let $\pi=w_1w_2\cdots w_k$.
By Step (1) there are $n-m$ entries in $\pi$ which are $\X$ and by Step (2) the numbers in $[m]$ all appear in $\pi$ exactly once and in increasing order from left to right. This shows that $\pi$ is an outcome of a weakly increasing $(m,n)$-parking function with $m$ cars and $n$ spots, as claimed.
The fact that $\pi$ is unique follows from the fact that the construction of the words $w_1,w_2,\ldots,w_k$ are uniquely determined by the entries $c_1,c_2,\ldots,c_k$ of \textbf{c}, respectively.
\end{proof}

   Knowing the outcome permutation (or equivalently a weak composition) is not enough information to know the lucky set of an $(m,n)$-parking function with that outcome (or equivalently weak composition). This is analogous to the fact that descent bottoms of a permutation in $\Sym_n$ only gave a subset of the lucky cars of a parking function with a given outcome, a result we established in \Cref{lem:descent bottoms are lucky}. 
   The main issue is that the lucky set places a restriction on the positive parts appearing in the weak composition associated to an outcome.

For a fixed lucky set $I$, we will determine the set of weak compositions that correspond to outcomes of elements in $\LPF_{m,n}^\uparrow(I)$.
Before presenting this result we give a few definitions and technical results.

\begin{definition}\label{def:partial sums}
    Let $\textbf{c}$ be the associated weak composition of $\pi$. Let $\textbf{p}$ denote the sequence of positive integers in $\textbf{c}$, appearing in the same order as in $\textbf{c}$, and let $|\textbf{p}|$ denote the number of parts of $\textbf{p}$. We call $\textbf{p}$ the \textit{composition associated to $\pi$}.

    Given a composition $\textbf{p}=(p_1,p_2,\ldots,p_k)$ define its \textit{partial sums list} by \[\textbf{s}=(s_0,s_1,s_2,\ldots,s_k),\]
    where $s_0=0$ and $s_i=\sum_{j=1}^ip_j$ for each $1\leq i\leq k$.
\end{definition}
We illustrate \Cref{def:partial sums} with the next example.
\begin{example}\label{ex:previous}
    Let $\pi=1234\X56789\X10\X(11)(12)\X\X$.

The weak composition associated to $\pi$ is given by $\textbf{c}=(4,0,5,0,1,0,2,0,0)$ and the composition associated to $\pi$ is $\textbf{p}=(4,5,1,2)$. The partial sum list is $\textbf{s}=(0,4,9,10,12)$.
\end{example}

Not every composition is the composition associated to an outcome of a weakly increasing $(m,n)$-parking function with a given lucky set. For example, the only  composition associated to an outcome of a weakly increasing parking function with lucky set $I=\{1\}$ is $\textbf{p}=(m)$, because if $\textbf{p}=(a,b)$ with $a,b>0$ and $a+b=m$, then the weak composition associated to $\textbf{p}$ would have at least one zero between the parts $a$ and $b$ in $\textbf{p}$. However, such a zero would indicate that in $\pi$, the outcome permutation, entry $\pi_{a+1}=\X$, implying that car $a+1$ would be lucky, a contradiction since we assumed that the only lucky car was the first car as $I=\{1\}$. 

To determine all of the possible compositions corresponding to an outcome of an element in $\LPF_{m,n}^\uparrow(I)$, we must describe how the entries in the lucky set $I$ restrict the types of parts appearing in the associated composition. 
The main idea will be to begin with a tuple $\textbf{u}$ associated to the lucky set $I$ and use it to construct a set of tuples $\{\textbf{v}\}$ by replacing consecutive entries in $\textbf{u}$ with their sum in $\textbf{v}$, ensuring that the result is a weak composition that comes from an outcome of a weakly increasing $(m,n)$-parking function with lucky set $I$.

Before giving the technical definition we illustrate it with an example. 
\begin{example} \label{ex:insert bars}  
Let $m=22$ and $n=27$.  Consider \[\textbf{u}=(1,3,1,4,2,5,3,2,1),\] which is a partition of $m=22$. We will place $n-m=5$ bars in the tuple $\textbf{u}$. One way to do this is $(\vert1\vert3,1,4,2\vert5,3\vert2,1\vert)$. With such a placement of bars we then add the entries between bars to get $(\vert1\vert10\vert8\vert3\vert)$. Then we replace the bars with ``,'' to get the tuple $\textbf{v}=(1,10,8,3)$. Of course a distinct placement of bars would yield a different result. 
If instead we had $(\vert\vert1,3,1\vert4,2\vert5,3,2\vert1)$, then $\textbf{v}=(5,6,10,1)$. We also could insert all bars at the end or start of $\textbf{u}$, in which case  $\textbf{v}=(22)$.
\end{example}
We now give the formal definition of the construction illustrated in \Cref{ex:insert bars}.

\begin{definition}\label{def: good comps}
    Let $1\leq m\leq n$ and let $\textbf{u}=(u_1,u_2,\ldots,u_k)$ be a composition of $m$. 
    Insert $n-m$ bars into $\textbf{u}$, denoted ``$\vert$''.
    Compute the sums of any integers between pairs of bars, or a parenthesis and a bar (at the start/end of the tuple). 
    Then delete any
    bars immediately following the opening parenthesis ``$($'' or immediately preceeding the closing parenthesis ``$)$'', and replace all other bars with commas.

    Let $\mathcal{S}(\textbf{u})$ denote the set all of possible tuples constructed in this way, which we refer to as the \textit{good compositions of} $\textbf{u}$. 
    Whenever we need to specify the number of parts, we let $\mathcal{S}_x(\textbf{u})$ denote the set of good compositions of $\textbf{u}$ with exactly $x+1$ parts.
\end{definition}

The set of  good compositions of a specific tuple related to the set of lucky cars will help in determining the parts appearing in the compositions of weakly increasing $(m,n)$-parking functions with a fixed lucky set. We illustrate this next before proceeding to give a more technical result.
\begin{example}\label{ex:computing good comps}
    Let $m=9$, $n=11$, and $I=\{1,4,5\}\subseteq[9]$. Let $\textbf{u}$ be the tuple constructed from the consecutive differences of the increasing entries in $I$ with a final entry appended which ensures $\textbf{u}$ is a composition of $m=9$. Hence, $\textbf{u}=(4-1,5-4,9+1-5)=(3,1,5)$. 
    Let $k=n-m=11-9=2$, then the good compositions of $\textbf{u}$ are the elements in the set
    \begin{align}\label{eq:good comps of 315}\mathcal{S}((3,1,5))&=\mathcal{S}_0((3,1,5))\cup\mathcal{S}_1((3,1,5))\cup\mathcal{S}_2((3,1,5))
    \intertext{where}\nonumber
        \mathcal{S}_0((3,1,5))&=\{(9)\}\\\nonumber
        \mathcal{S}_1((3,1,5))&=\{(4,5),(3,6)\},\text{ and }\\\nonumber
        \mathcal{S}_2((3,1,5))&=\{(3,1,5)\}.
    \end{align}
We now show that each of the compositions of $9$ in  $\mathcal{S}((3,1,5))$ corresponds to a composition associated to an outcome of a weakly increasing $(9,11)$-parking function with lucky set $I$. 
By \Cref{lem:number of outcomes in weakly inc pfs mn}, there should be $\binom{11-9+3}{11-9}=\binom{5}{2}=10$ outcomes.
Using the same technique as in \Cref{ex:outcomes mn} we construct all of the outcome permutations in $\Sym_{9,11}$ with lucky set $I=\{1,4,5\}$ and compute their associated composition:
\begin{align}\label{eq:outcomes and comps for 315}
\begin{tabular}{ll}
     \X\X123456789, \X123456789\X, 123456789\X\X, & \text{ which have associated composition $(9)$,} \\
     \X123\X456789, 123\X\X456789, 123\X456789\X,& \text{ which have associated composition $(3,6)$,}\\
     \X1234\X56789, 1234\X\X56789, 1234\X56789\X,& \text{ which have associated composition $(4,5)$, and }\\
     123\X4\X56789,& \text{ which have associated composition $(3,1,5)$.}\\
\end{tabular}\end{align}
Confirming that the good compositions of $(3,1,5)$ are precisely the compositions associated to outcomes of weakly increasing $(9,11)$-parking functions with lucky set $I=\{1,4,5\}$.

However, if  instead $n=10$, then, as $n-m=1$, we would only be able to place a single bar in $(3,1,5)$. 
So any good composition associated to an outcome would have at most $n-m+1=2$ parts.
Hence, $\mathcal{S}((3,1,5))=\{(9),(4,5),(3,6)\}$.  
\end{example}
\Cref{ex:computing good comps} illustrates a general fact: good compositions of $\textbf{u}$ constructed with at most $n-m$ bars, will have at most $n-m+1$ parts.
Our next result gives a collection of 
good compositions for a tuple related to the lucky set.

\begin{proposition}\label{prop:allowable comps}
    Let $1\leq m\leq n$ and fix a lucky set $I=\{1=i_1<i_2<\cdots<i_k\}\subseteq[m]$ of $\PF_{m,n}$. 
    Let $\textbf{i}=(i_2-i_1,i_3-i_2,\ldots,i_{k-1}-i_{k-2},i_{k}-i_{k-1},m+1-i_k)$. 
    If  $\textbf{p}$ is a good composition of $\textbf{i}$, namely $\textbf{p}\in \mathcal{S}(\textbf{i})$, with at most $n-m+1$ parts,
    then there exists an outcome permutation whose associated composition is $\textbf{p}$.
\end{proposition}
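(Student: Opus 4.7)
The plan is to produce an outcome permutation $\pi \in \Sym_{m,n}$ whose associated composition is $\textbf{p}$, and then exhibit a weakly increasing parking function $\alpha \in \LPF_{m,n}^{\uparrow}(I)$ satisfying $\out(\alpha) = \pi$ and $\Lucky(\alpha) = I$, which simultaneously certifies the existence of the required outcome.

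First I would observe that the partial sums of $\textbf{i}$ are $0, i_2 - 1, i_3 - 1, \ldots, i_k - 1, m$, so adding $1$ to each yields the set $\{1, i_2, \ldots, i_k, m+1\} = I \cup \{m + 1\}$. Since a good composition $\textbf{p} = (p_1, \ldots, p_q) \in \mathcal{S}(\textbf{i})$ is built from $\textbf{i}$ by merging runs of consecutive parts, every partial sum of $\textbf{p}$ is already a partial sum of $\textbf{i}$; hence each starting index $s_l \coloneq 1 + \sum_{l' < l} p_{l'}$ lies in $I$. To construct $\pi$, form the weak composition $\textbf{c}$ of $m$ with $n - m + q$ parts by placing one $0$ between each pair of consecutive positive parts of $\textbf{p}$ and appending the remaining $n - m - q + 1$ zeros at the end; the hypothesis $q \leq n - m + 1$ guarantees enough zeros. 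Applying \Cref{lem:wc to perm} to $\textbf{c}$ returns a permutation $\pi \in \Sym_{m,n}$ with non-$\X$ subwords $w_1, \ldots, w_q$, where $w_l$ is the increasing run of $p_l$ consecutive integers beginning at $s_l$. By construction, the associated composition of $\pi$ is exactly $\textbf{p}$.

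Let $\sigma(i)$ denote the position of integer $i$ in $\pi$, and let $j_*(i) \coloneq \max\{j \leq i : j \in I\}$. Define $\alpha = (a_1, \ldots, a_m)$ by $a_i = \sigma(i)$ if $i \in I$ and $a_i = \sigma(j_*(i))$ if $i \notin I$. A short case analysis on whether $i$ and $i + 1$ each belong to $I$ shows $\alpha$ is weakly increasing, and the bound $a_i \leq \sigma(i) \leq n - m + i$ confirms $\alpha \in \PF_{m,n}^{\uparrow}$.

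Finally, I would verify $\out(\alpha) = \pi$ and $\Lucky(\alpha) = I$ by processing cars in order. For $i \in I$, the already-parked cars occupy spots $\sigma(1) < \sigma(2) < \cdots < \sigma(i - 1) < \sigma(i)$, so car $i$ finds its preferred spot $\sigma(i)$ empty and parks lucky. For $i \notin I$, the key observation is that $j_*(i)$ lies in the same subword as $i$: letting $w_{l(i)}$ be the subword containing $i$, we have $s_{l(i)} \in I$ and $s_{l(i)} \leq i$, so $s_{l(i)} \leq j_*(i) \leq i$, placing $j_*(i)$ inside $w_{l(i)}$. Hence cars $j_*(i), j_*(i) + 1, \ldots, i - 1$ have already parked at the consecutive spots $\sigma(j_*(i)), \sigma(j_*(i)) + 1, \ldots, \sigma(i) - 1$, and car $i$ slides from its occupied preferred spot $\sigma(j_*(i))$ through this contiguous block, parking unluckily at $\sigma(i)$. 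The main obstacle is this subword-sharing claim, which is precisely the bridge between the combinatorics of good compositions of $\textbf{i}$ and the lucky set $I$.
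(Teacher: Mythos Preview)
Your proof is correct and uses the same core construction as the paper: form the weak composition $\textbf{c}$ from $\textbf{p}$ by inserting a single zero between consecutive parts and appending the remaining $n-m-(q-1)$ zeros, then invoke \Cref{lem:wc to perm} to obtain $\pi$ with associated composition $\textbf{p}$.

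Where you go beyond the paper is in explicitly verifying that $\pi$ is the outcome of some $\alpha\in\LPF_{m,n}^{\uparrow}(I)$, i.e.,\ that the lucky set is exactly~$I$. The paper's proof stops after observing that $\pi$ is the outcome of \emph{some} weakly increasing $(m,n)$-parking function and that its associated composition is $\textbf{p}$; it does not spell out the lucky-set check (which is what is really needed when the proposition is invoked in the proof of \Cref{prop:description of all outcomes}). Your key additional observation---that the partial sums of any good composition of $\textbf{i}$ are among the partial sums of $\textbf{i}$, so each block-start $s_l=1+\sum_{l'<l}p_{l'}$ lies in $I$---is precisely what makes the verification of $\Lucky(\alpha)=I$ go through, and it is not made explicit in the paper. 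So your argument follows the paper's route but fills in a step the paper leaves to the reader.
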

\begin{proof}
    Let $\textbf{p}=(p_1,p_2,\ldots,p_k)$ be a good composition of $\textbf{i}$ with $k\leq n-m+1$. 
    Then, by definition, 
    $\sum_{i=1}^{k}p_i=m$.
    Since $n-m$ is the number of empty spots on the street, then any outcome of a weakly increasing $(m,n)$-parking function with lucky set $I$ will include $n-m$ entries equal to $\X$.
    As $\textbf{p}$ has $k$ entries, with $k\leq n-m+1$, then there are $n-m$ zeros available which we can use to construct a weak composition by inserting a $0$ in $\textbf{p}$ between every two positive values (there are $k-1$ such places to insert a zero) and 
    we append the remaining $n-m-(k-1)$ zeros to the end of the weak composition, which we 
    denote by
    \[\textbf{c}=(p_1,0,p_2,0,\ldots,p_{k-1},0,p_k,\underbrace{0,0,\ldots,0}_{n-m-(k-1)\text{ terms}})=(c_1,c_2,\ldots,c_{n-m+k}).\]
    Using the algorithm in the proof of \Cref{lem:wc to perm}, we construct 
    \[\small \pi=\underbrace{123\cdots p_1}_{\text{$p_1$ numbers}}~\X~\underbrace{(p_1+1)(p_1+2)\cdots\left(\sum_{x=1}^2p_x\right)}_{\text{$p_2$ numbers}}~\X\cdots\X~\underbrace{\left(\left(\sum_{x=1}^{k-1}p_x\right)+1\right)\left(\left(\sum_{x=1}^{k-1}p_x\right)+2\right)\cdots \left(\sum_{x=1}^{k}p_x\right)}_{\text{$p_k$ numbers}}~\underbrace{\X\cdots\X}_{\begin{matrix}{\scriptstyle{n-m-(k-1)}}\\ \tiny{\text{terms}}\end{matrix}}.\]
Since
    there are $n-m-(k-1)+(k-1)=n-m$ entries in $\pi$ which are $\X$ and since all of the numbers in $[m]$ appear in $\pi$ exactly once and in increasing order from left to right, we know that $\pi$ is an outcome of a weakly increasing $(m,n)$-parking function, as claimed.
    Lastly, by \Cref{def:weak composition}, the weak composition associated to $\pi$ is exactly $\textbf{c}$, and the composition associated to $\pi$ is exactly $\textbf{p}$, which concludes the proof.
\end{proof}

We now describe how to generate all outcome permutations (equivalently weak compositions) from the set of good compositions.

\begin{proposition}\label{prop:description of all outcomes}
     Let $1\leq m\leq n$ and fix a lucky set $I=\{1=i_1<i_2<\cdots<i_k\}\subseteq[m]$ of $\PF_{m,n}$. 
    Let $\textbf{i}=(i_2-i_1,i_3-i_2,\ldots,i_{k-1}-i_{k-2},i_{k}-i_{k-1},m+1-i_k)$ and let $\textbf{p}$ be a good composition of $\textbf{i}$.
    Then the following statements hold:
    \begin{enumerate}[leftmargin=.5in]
        \item  The good composition $\textbf{p}$ of $\textbf{i}$ is a composition associated to an outcome of a weakly increasing $(m,n)$-parking function with lucky set $I$ if and only if $\textbf{p}$ has $x+1$ parts, where $0\leq x\leq n-m$.
        \item There are \[\binom{|I|-1}{x}\] good compositions of $\textbf{i}$ with $x+1$ parts, where $0\leq x\leq n-m$.
        \item For each good composition $\textbf{p}$ of $\textbf{i}$ with $x+1$ parts, where $0\leq x\leq n-m$, there are \[\binom{n-m+1}{n-m-x}\]
        weak compositions that are outcomes of a weakly increasing $(m,n)$-parking function with lucky set~$I$. 
        \item The set of weak compositions constructed from the set of good compositions of $\textbf{i}$ with $x+1$ parts, for all $0\leq x\leq n-m$, is in bijection with the set $\out_{m,n}^\uparrow(I)$, which is the set of outcomes of weakly increasing $(m,n)$-parking functions  with lucky set $I$.
    \end{enumerate}
    
\end{proposition}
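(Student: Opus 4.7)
The plan is to decompose each outcome $\pi \in \out_{m,n}^\uparrow(I)$ into two essentially independent pieces: its associated composition $\textbf{p}$ (the lengths of the maximal increasing runs of positive integers in $\pi$), and the placement of the $n-m$ symbols $\X$ among these runs. The four claims will then follow by treating these pieces separately and combining.

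For (1), the forward direction follows from \Cref{prop:allowable comps}, combined with the observation that the outcome produced there can be realized by a weakly increasing PF whose lucky set is exactly $I$: give each car in $[m]\setminus I$ the preference of the nearest lucky car to its left, keeping it unlucky while preserving the intended outcome. For the reverse direction, if $\textbf{p}=(p_1,\ldots,p_{x+1})$ is the associated composition of some $\pi \in \out_{m,n}^\uparrow(I)$, then each run-start of $\pi$ is preceded (in the extended string $\X\pi$) either by $\X$ or by a larger integer, hence is a descent bottom, hence by \Cref{lem:descent bottoms are lucky in mn} lies in $I$. The run-starts therefore form a subset of $I$ containing $1$, and direct inspection of the merging operation in \Cref{def: good comps} shows that the resulting $\textbf{p}$ is a good composition of $\textbf{i}$. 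The bound $x+1 \leq n-m+1$ follows because each of the $x$ internal gaps between consecutive runs must contain at least one of the $n-m$ available $\X$'s. For (2), a good composition of $\textbf{i}$ with $x+1$ parts corresponds bijectively to a choice of which $x$ of the $|I|-1$ internal gaps in $\textbf{i}$ are preserved (the rest being merged by summation), giving $\binom{|I|-1}{x}$ compositions.

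For (3), with $\textbf{p}$ fixed, weak compositions associated to outcomes with composition $\textbf{p}$ arise by distributing $n-m$ copies of $\X$ into $x+2$ positions (before each run and after the last), with each of the $x$ internal gaps holding at least one $\X$ (else two adjacent runs would merge into one). After subtracting the $x$ forced copies, stars-and-bars gives $\binom{(n-m-x)+(x+2)-1}{(x+2)-1}=\binom{n-m+1}{x+1}=\binom{n-m+1}{n-m-x}$, and each such weak composition determines a unique outcome via \Cref{lem:wc to perm}. Part (4) is then immediate: the map sending $\pi$ to (its associated composition, the placement of its $\X$'s) is a bijection onto pairs consisting of a good composition of $\textbf{i}$ together with a valid $\X$-distribution. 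As a consistency check, summing (2) times (3) yields $\sum_{x=0}^{n-m}\binom{|I|-1}{x}\binom{n-m+1}{x+1}=\binom{|I|+n-m}{n-m}$ by Vandermonde's identity, in agreement with \Cref{lem:number of outcomes in weakly inc pfs mn}. I expect the main obstacle to be the reverse direction of (1), which requires simultaneously tracking that run-starts are forced lucky (to recognize $\textbf{p}$ as good) and that any allowable lucky-set assignment is in fact realizable by some PF (to avoid under- or over-counting outcomes).
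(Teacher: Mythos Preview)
Your proposal is correct and follows essentially the same route as the paper: the same choice-of-internal-gaps count for (2), the same stars-and-bars argument for (3), and Vandermonde to reconcile with \Cref{lem:number of outcomes in weakly inc pfs mn}. Two minor differences are worth noting. First, your direction labels in (1) are swapped relative to the paper's: the paper treats the direction invoking \Cref{prop:allowable comps} as ($\Leftarrow$), and for ($\Rightarrow$) simply observes that the bar construction in \Cref{def: good comps} limits the number of parts to at most $n-m+1$; your ``reverse direction'' goes further by showing that the associated composition of \emph{any} $\pi\in\out_{m,n}^\uparrow(I)$ is a good composition of $\textbf{i}$, which is not literally part of Statement~(1) (the hypothesis already assumes $\textbf{p}$ is good) but is exactly what you need for (4). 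Second, for (4) you exhibit the bijection $\pi\mapsto(\textbf{p},\text{$\X$-placement})$ directly and invoke Vandermonde only as a check, whereas the paper argues purely by cardinality: it counts the weak compositions constructed in (1)--(3), applies Chu--Vandermonde to get $\binom{n-m+|I|}{n-m}$, and matches this against \Cref{lem:number of outcomes in weakly inc pfs mn}. Your explicit remark that the outcome built in \Cref{prop:allowable comps} can be realized by a weakly increasing parking function with lucky set \emph{exactly} $I$ (by giving each unlucky car the preference of the nearest lucky car to its left) is a detail the paper leaves implicit.
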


\begin{proof}
    Proof of Statement (1). $(\Rightarrow)$ By \Cref{def: good comps}, the good compositions $\textbf{p}$ of 
    $\textbf{i}$ are constructed by placing $n-m$ bars in $\textbf{i}$. 
    These bars can be placed at the start or end of $\textbf{i}$, or between any two positive integers in $\textbf{i}$. If $x$ bars are placed in between two positive values in $\textbf{i}$, then the good composition constructed  has exactly $x+1$ parts. Thus, the most parts any good composition of $\textbf{i}$ can have is $x+1$ where $0\leq x\leq n-m$, as claimed.
    
    $(\Leftarrow)$ This direction holds by the construction in \Cref{prop:allowable comps}.
    \\

\noindent    Proof of Statement (2). A good composition $\textbf{p}$ of $\textbf{i}$ is uniquely determined by where we place the bars in $\textbf{i}$. As there are $|I|$ entries in $\textbf{i}$, then there are $|I|-1$ places where we can insert bars so as to affect the number of parts. By (1) we can insert $x$ bars, where $0\leq x\leq n-m$, among $|I|-1$ locations between two consecutive values in $\textbf{i}$. Hence, the number of good compositions $\textbf{p}$ of $\textbf{i}$ with $x+1$  parts, where $0\leq x\leq n-m$, is given by $\binom{|I|-1}{x}$, and as usual $\binom{a}{b}=0$, whenever $b>a$.\\

\noindent    Proof of Statement  (3). Given $\textbf{p}$, a good composition of $\textbf{i}$ with $x+1$ parts, where $0\leq x\leq n-m$, by \Cref{prop:allowable comps}, we must use $x$ of the $n-m$ zeros to place between the entries of $\textbf{p}$ as we construct the weak compositions with $n-m$ zeros, which correspond to outcomes of weakly increasing parking functions with lucky set $I$. This leaves $x+2$ positions at which we can insert the remaining $n-m-x$ zeros i.e., between two positive values or at the start or end of $\textbf{p}$. 
The number of ways we can do this is equal to the number of weak compositions of $n-m-k$ into $x+2$ parts, which is given by 
\[\binom{(n-m-x)+(x+2)-1}{n-m-x}=\binom{n-m+1}{n-m-x},\]
as claimed.\\

\noindent Proof of Statement  (4). By (1), (2), and (3) we have constructed 
\[\sum_{x=0}^{n-m}\binom{|I|-1}{x}\binom{n-m+1}{n-m-x}\]
weak compositions of $m$ with $n-m$ zeros. 
By \Cref{prop:allowable comps} we have confirmed that all of the constructed weak compositions are outcomes of weakly increasing $(m,n)$-parking functions with lucky set $I$. 
By \Cref{lem:number of outcomes in weakly inc pfs mn}, the number of outcomes of weakly increasing $(m,n)$-parking functions  with lucky set $I$ is given by $\binom{n-m+|I|}{n-m}$.
As the sets are finite, it suffices to show that 
\begin{align}\label{eq:looks like chu-vandermont identity}
\sum_{x=0}^{n-m}\binom{|I|-1}{x}\binom{n-m+1}{n-m-x}=\binom{n-m+|I|}{n-m}.
\end{align}
\Cref{eq:looks like chu-vandermont identity} follows directly from the Chu-Vandermont identity 
\begin{align}
    \sum_{k=0}^{\ell}\binom{r}{k}\binom{s}{\ell-k}=\binom{r+s}{\ell}
\end{align}
by letting $\ell=n-m$, $k=x$, $r=|I|-1$, and $s=n-m+1$.
\end{proof}

\begin{example}[Continuing \Cref{ex:computing good comps}]
    Let $m=9$, $n=11$, and $I=\{1,4,5\}\subseteq[9]$. 
    Then $\textbf{i}=(4-1,5-1,9+1-5)=(3,1,5)$. By Statement (2) in \Cref{prop:description of all outcomes}, we have $\binom{2}{0}=1$ good composition of $\textbf{i}$ with~$1$~part, $\binom{2}{1}=2$ good compositions of $\textbf{i}$ with $2$ parts, and $\binom{2}{2}=1$ good composition of $\textbf{i}$ with $3$ parts. This agrees with \Cref{eq:good comps of 315}. 
    By Statement (3) in \Cref{prop:description of all outcomes}, there are $\binom{3}{2}=3$ weak compositions associated to compositions of $\textbf{i}$ with $1$ part, $\binom{3}{1}=3$ weak compositions associated to compositions of $\textbf{i}$ with $2$ parts, and $\binom{3}{0}=1$ weak composition associated to compositions of $\textbf{i}$ with $3$ parts. This agrees with the data presented in \eqref{eq:outcomes and comps for 315}. 
    Lastly, by Statement (4) of \Cref{prop:description of all outcomes}, the number of outcomes of weakly increasing $(9,11)$-parking functions with lucky set $I=\{1,4,5\}$ is 
    \[\sum_{x=0}^{2}\binom{2}{x}\binom{3}{2-x}=\binom{2}{0}\binom{3}{2}+\binom{2}{1}\binom{3}{1}+\binom{2}{2}\binom{3}{0}=10,\]
    which agrees with the data presented in \eqref{eq:outcomes and comps for 315}.
\end{example}

The key take-away from \Cref{prop:allowable comps} is that, given a lucky set $I$, we can construct 
all of the outcomes of weakly increasing $(m,n)$-parking functions  with lucky set $I$ from the set of
good compositions of $\textbf{i}$ with at most $n-m+1$ parts. 
Next, we work towards enumerating the set of weakly increasing $(m,n)$-parking functions with a fixed outcome and a fixed set of lucky cars.

When $1\leq m\leq n$, an outcome permutation $\pi\in\Sym_{m,n}$ can be broken into subsets of the street on which the cars park, which we refer to as \textit{occupied sub-streets}. As defined, the composition associated to $\pi$ corresponds to the sequence of the lengths of the occupied sub-streets. 
Since the $(m,n)$-parking functions we are considering are weakly increasing, the cars all park on the street in order $1$ to $m$ in those occupied sub-streets. 
Moreover, the number of preferences of a car depends on whether the car is lucky or unlucky and also on its location within the occupied sub-street it parked in. 
When a car is lucky, and not the first car parked in an occupied sub-street, then we need to determine how far that car is from the start of the occupied sub-street in which it parked. The following function describes this distance and plays a key role in our count for the number of weakly increasing $(m,n)$-parking functions with a fixed outcome and a fixed set of lucky cars.

\begin{definition}\label{def:denoms}
         Let $1\leq m\leq n$ and fix the set of lucky cars $I=\{1=i_1<i_2<\cdots<i_k\}\subseteq[m]$. 
    Let $\textbf{i}=(i_2-i_1,i_3-i_2,\ldots,i_{k-1}-i_{k-2},i_{k}-i_{k-1},m+1-i_k)$ and let $\textbf{p}$ be a good composition of $\textbf{i}$. Let $\textbf{s}=(0,s_1,s_2,\ldots,s_k)$ be the partial sum list of $\textbf{p}$. 
    Define the function $d_{\textbf{p}}:I\to\mathbb{Z}^+$ by 
    \[d_{\textbf{p}}(i)=\begin{cases}
        1&\text{if $i=s_j+1$ for some $0\leq j\leq k$}\\
        i-(s_j+1)&\text{if $s_j+1<i<s_{j+1}$ for some $0\leq j\leq k-1$}\\
        m-(s_{k-1}+1)&\text{if $i=m$.}\\
    \end{cases}\]
\end{definition}
We illustrate \Cref{def:denoms} next.
\begin{example}[Continuing \Cref{ex:previous}]
Let $m=12$, $n=17$, and $I=\{1,3,5,8,10,11,12\}\subseteq[12]$.
Then $\textbf{i}=(2,2,3,2,1,1,1)$. 
Consider $\textbf{p}=(4,5,1,2)$, which is a good composition of $\textbf{i}$, whose partial sum list is $\textbf{s}=(0,4,9,10,12)$.
We can readily compute 
\begin{center}
\begin{tabular}{ll}
    $d_{\textbf{p}}(1)=1$ & since $1=1+s_0$,\\
    $d_{\textbf{p}}(3)=3-1=2$ &since $s_0+1=1<3<4=s_1$,\\
    $d_{\textbf{p}}(5)=1$ &since $s_1+1=5$,\\
    $d_{\textbf{p}}(8)=8-5=3 $& since $s_1+1=5<8<9=s_2$,\\
    $d_{\textbf{p}}(10)=1$ &since $s_2+1=10$,\\
    $d_{\textbf{p}}(11)=1$ & since $s_3+1=11$, and\\
    $d_{\textbf{p}}(12)=12-10=2$ &since $12=m$.
\end{tabular}
\end{center}
We now count the number of weakly increasing $(12,17)$-parking functions with lucky set $I=\{1,3,5,8,10,11,12\}$ and outcome $\pi=1234\X56789\X10\X(11)(12)\X\X$ in two ways.
First, 
using \Cref{lem:possible prefs given an outcome - mn version}, we have that 
\begin{align}
\label{eq:prod=24}\prod_{i=1}^{n=17}|\Pref_{I}(\pi_i)|=\prod_{\pi_j\notin I\cup\{\X\}}\ell(\pi_j)=
 1\cdot3 \cdot1\cdot2\cdot4=24.\end{align}
Now consider the product
\begin{align}
\displaystyle\frac{\displaystyle\prod_{i=1}^{4}(p_i-1)!}{\displaystyle \prod_{i\in I}d_{\textbf{p}}(i)}=\frac{3!~4!~0!~2!}{1\cdot2\cdot1\cdot3\cdot1\cdot1\cdot2}=24.\label{eq:24 again}
\end{align}
Hence, \Cref{eq:prod=24} and \Cref{eq:24 again} give the same count. Next, we prove the that the formula given in \Cref{eq:24 again} holds in more generality.
\end{example}
In what follows we let 
\[\out^{-1, \uparrow}_{m,n}(I,\pi)=\{\alpha\in\PF_{m,n}^\uparrow:\Lucky(\alpha)=I\mbox{ and }\out(\alpha)=\pi\} 
    \]
    be the set of weakly increasing $(m,n)$-parking functions with  lucky set $I$ and outcome $\pi$.
    
\begin{lemma}\label{lem:using comps for counting pfs with fixed outcome}
    Fix $\pi=\pi_1\pi_2\cdots\pi_n\in\Sym_{m,n}$ an outcome of a weakly increasing  $(m,n)$-parking function with lucky set $I$. Let  $\textbf{p}=(p_1,p_2,\ldots,p_k)$ be the  composition associated to $\pi$ and $\textbf{s}=(0,s_1,s_2,\ldots,s_k)$ be the partial sum list of $\textbf{p}$. 
    Then the number of $\alpha\in\PF_{m,n}^\uparrow$ with lucky set $I$ and outcome $\pi$ is given by 
    \begin{align}\label{eq:prof formula with fracttion}
    |\out^{-1,\uparrow}_{m,n}(I,\pi)|=\displaystyle\frac{\prod_{i=1}^{k}(p_i-1)!}{\prod_{i\in I}d_{\textbf{p}}(i)},\end{align}
    where $0!=1$.
\end{lemma}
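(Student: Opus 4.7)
The plan is to reduce the claim to \Cref{lem:possible prefs given an outcome - mn version} and then rewrite the resulting product over unlucky cars by grouping them according to the sub-street structure encoded by $\textbf{p}$.

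First I would unpack the geometry of $\pi$ in terms of $\textbf{p}$. Since $\pi$ is the outcome of a weakly increasing $(m,n)$-parking function, its non-$\X$ entries are $1,2,\dots,m$ appearing in increasing order, partitioned by the $\X$ letters into $k$ maximal contiguous runs of lengths $p_1,\dots,p_k$; the $j$-th run contains the cars $s_{j-1}+1,\dots,s_j$. For the car $c=s_{j-1}+h$ sitting at position $h$ of the $j$-th run, the entries immediately to its left within the run are $s_{j-1}+1,\dots,s_{j-1}+h-1$, each strictly less than $c$, while the entry just before the run (if any) is $\X>c$, which terminates the subword counted by $\ell$. Hence $\ell(c)=h-1$.

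Next I would apply \Cref{lem:possible prefs given an outcome - mn version} and regroup the resulting product by sub-street:
\[
|\out^{-1,\uparrow}_{m,n}(I,\pi)|=\prod_{\pi_j\notin I\cup\{\X\}}\ell(\pi_j)=\prod_{j=1}^{k}\prod_{\substack{1\le h\le p_j\\ s_{j-1}+h\notin I}}(h-1).
\]
The key algebraic step is the identity $\prod_{h=2}^{p_j}(h-1)=(p_j-1)!$. Splitting the range $[2,p_j]$ between lucky and unlucky positions converts the inner product to
\[
\prod_{\substack{1\le h\le p_j\\ s_{j-1}+h\notin I}}(h-1)=\frac{(p_j-1)!}{\displaystyle\prod_{\substack{2\le h\le p_j\\ s_{j-1}+h\in I}}(h-1)}.
\]

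To finish I would match the new denominator against $d_{\textbf{p}}$: by \Cref{def:denoms} the leftmost car $s_{j-1}+1$ of every sub-street has $d_{\textbf{p}}=1$, and any later lucky car $s_{j-1}+h$ with $2\le h\le p_j$ has $d_{\textbf{p}}=h-1$, so multiplying over the lucky cars of sub-street $j$ recovers exactly the denominator above (with the $h=1$ factor contributing a harmless $1$). Taking the product over $j=1,\dots,k$ then produces the stated formula. I expect the main obstacle to be the boundary case of the rightmost car of a non-terminal sub-street: the three cases in \Cref{def:denoms} literally cover $c=s_j$ only when $j=k$ or $p_{j+1}=1$, so I would need to justify the intended uniform extension $d_{\textbf{p}}(s_j)=p_j-1$ and verify consistency on the overlaps before the identification $d_{\textbf{p}}(s_{j-1}+h)=h-1$ for $h\ge 2$ can close the argument.
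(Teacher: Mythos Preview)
Your overall strategy is the same as the paper's: both arguments observe that the $h$-th car in a sub-street of length $p_j$ has $h-1$ possible preferences, collect these into $(p_j-1)!$ per sub-street, and then divide out the factor contributed by each lucky car. The paper does this via a direct overcount-and-correct argument rather than by citing \Cref{lem:possible prefs given an outcome - mn version}, but the computation is identical, and your identification $d_{\textbf p}(s_{j-1}+h)=h-1$ for $h\ge 2$ is exactly the mechanism the paper uses. Your flag about the boundary case in \Cref{def:denoms} is also well taken: the three clauses do not literally cover a lucky car at $i=s_j$ with $1\le j<k$ and $p_j>1$ (for instance $I=\{1,4,5\}$, $\textbf p=(4,5)$, $i=4$), and the paper's own later computations silently use the extension $d_{\textbf p}(s_j)=p_j-1$ that you propose to justify.

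There is, however, a genuine gap in your first step, and the paper's proof shares it. \Cref{lem:possible prefs given an outcome - mn version} enumerates \emph{all} $\alpha\in\PF_{m,n}$ with outcome $\pi$ and lucky set $I$, whereas the left-hand side here is restricted to \emph{weakly increasing} $\alpha$. These counts differ: take $m=3$, $n=4$, $I=\{1,2\}$, $\pi=123\X$, so $\textbf p=(3)$ and the formula gives $(3-1)!/(d_{\textbf p}(1)\,d_{\textbf p}(2))=2$; the two parking functions with this outcome and lucky set are $(1,2,1)$ and $(1,2,2)$, but only $(1,2,2)$ is weakly increasing, so $|\out^{-1,\uparrow}_{3,4}(\{1,2\},123\X)|=1$. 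Thus your reduction (and the paper's direct argument, which likewise counts $h-1$ options for the $h$-th car without imposing $a_{c+h}\ge a_{c+h-1}$) establishes $|\out^{-1}_{m,n}(I,\pi)|=\prod_{i=1}^{k}(p_i-1)!\big/\prod_{i\in I}d_{\textbf p}(i)$, not the weakly-increasing version claimed in the lemma.
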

\begin{proof}
    Fix $\pi=\pi_1\pi_2\cdots\pi_n\in\Sym_{m,n}$ an outcome of a weakly increasing $(m,n)$-parking function with lucky set $I$.
    The lengths of the occupied sub-streets on which the cars park is given by the composition associated to $\pi$, which we denote by $\textbf{p}=(p_1,p_2,\ldots,p_k)$. In an occupied sub-street of length $p_i$, as the cars park in order, the first has preference for a single spot and is lucky, as it is the first car parking in this occupied sub-street. 
    Of the remaining $p_i-1$ cars parking in the same occupied sub-street, without accounting for which cars are lucky, simply using the fact that they park in increasing order, can have a total of $(p_i-1)!$ options for their preference so as to end up parking in this sub-street. 
    Namely, the $j$th car parked on this sub-street can prefer any of the occupied spots to its left, of which there are $j-1$, and taking the product of $j-1$ over $2\leq j\leq p_i$ yields $1\cdot 2 \cdot 3 \cdots (p_i-1)=(p_i-1)!$. 
    Repeating this process over every occupied sub-street yields the numerator $\prod_{i=1}^k(p_i-1)!$ appearing in \Cref{eq:prof formula with fracttion}.

    However, there may be cars parked on occupied sub-streets which are lucky and are not parked at the start of a sub-street. 
    For any such car, we must reduce their number of preferences down to one possible preference, exactly the location of where they parked. 
    So considering each lucky car $i\in I$ in order from smallest to largest index, by \Cref{def:denoms}, the value $d_{\textbf{p}}(i)$ is the number of preferences we gave lucky car $i$ in the product 
$\prod_{i=1}^k(p_i-1)!$. Hence, for each lucky car $i\in I$, we must divide by $d_{\textbf{p}}(i)$, to remove those preferences from the count. 
Taking the product of $d_{\textbf{p}}(i)$ over all lucky cars reduces the preferences of those lucky cars back to one, as is necessary for them to be lucky. 

Therefore, the number of weakly increasing $(m,n)$-parking functions with outcome $\pi$ and lucky set $I$ is given by 
\[\displaystyle\frac{\prod_{i=1}^{k}(p_i-1)!}{\prod_{i\in I}d_{\textbf{p}}(i)},\]
as claimed.  
\end{proof}

We now give our final result, a product formula for the number of weakly increasing $(m,n)$-parking functions with a fixed set of lucky cars.

\begin{theorem}\label{thm:weakly increasing pfs with a fixed set of lucky cars - mn version}
     Let $1\leq  m\leq n$ and fix the set of lucky cars $I=\{1=i_1<i_2<\cdots<i_k\}\subseteq[m]$. 
    Let $\textbf{i}=(i_2-i_1,i_3-i_2,\ldots,i_{k-1}-i_{k-2},i_{k}-i_{k-1},m+1-i_k)$ and let $\mathcal{S}_{x}(\textbf{i})$ denote the set of good compositions of $\textbf{i}$ with exactly $x+1$ parts, where $0\leq x\leq m-n$.
Then the number of weakly increasing $(m,n)$-parking functions with lucky set $I$ is
\begin{align}\label{eq:cor to theorem}
|\LPF_{m,n}^{\uparrow}(I)|
&=\sum_{\pi=\pi_1\pi_2\cdots\pi_n\in \out_{m,n}^\uparrow(I)}\left(\prod_{\pi_j\notin I\cup\{\X\}} \ell (\pi_j)\right)\\
\label{eq:product formula for number of weakly inc mn pfs with fixed lucky set}
&=\sum_{x=0}^{n-m}\left[\sum_{\textbf{p}=(p_1,p_2,\ldots,p_{x+1})\in \mathcal{S}_{x}(\textbf{i})}\binom{n-m+1}{n-m-x}\left(\displaystyle\frac{\prod_{i=1}^{x+1}(p_i-1)!}{\prod_{i\in I}d_{\textbf{p}}(i)}\right)\right].
\end{align}    
\end{theorem}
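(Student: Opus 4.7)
The plan is to chain together the previously established results: \Cref{lem:using comps for counting pfs with fixed outcome} for counting weakly increasing parking functions with a fixed outcome, and \Cref{prop:description of all outcomes} for organizing outcomes by their associated composition. The whole proof is structural, using the infrastructure of compositions rather than any new combinatorial identity.

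First I would partition $\LPF_{m,n}^{\uparrow}(I)$ by outcome, yielding
\[
|\LPF_{m,n}^{\uparrow}(I)| \;=\; \sum_{\pi \in \out_{m,n}^{\uparrow}(I)} |\out^{-1,\uparrow}_{m,n}(I,\pi)|.
\]
This decomposition is immediate because each $\alpha\in\PF_{m,n}^\uparrow$ produces exactly one outcome. The first equality \eqref{eq:cor to theorem} is then the analog, in the weakly increasing setting, of the formula from \Cref{thm:count pfs with fixed lucky set - mn version}; the inner count is the number of weakly increasing parking functions with outcome $\pi$ and lucky set $I$.

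Next, to pass to \eqref{eq:product formula for number of weakly inc mn pfs with fixed lucky set}, I would regroup the outer sum by the composition $\textbf{p}$ associated to each outcome (see \Cref{def:weak composition} and \Cref{def:partial sums}). By \Cref{prop:description of all outcomes}(4), every outcome in $\out_{m,n}^{\uparrow}(I)$ has an associated composition that is a good composition of $\textbf{i}$ with between $1$ and $n-m+1$ parts, and by \Cref{prop:description of all outcomes}(3), each good composition $\textbf{p}\in\mathcal{S}_x(\textbf{i})$ corresponds to exactly $\binom{n-m+1}{n-m-x}$ outcomes — these are parameterized by placements of the $n-m-x$ ``extra'' $\X$ symbols into the $x+2$ gaps around the $x+1$ blocks of $\textbf{p}$. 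Crucially, by \Cref{lem:using comps for counting pfs with fixed outcome}, the per-outcome count
\[
|\out^{-1,\uparrow}_{m,n}(I,\pi)| \;=\; \frac{\prod_{i=1}^{x+1}(p_i-1)!}{\prod_{i\in I} d_{\textbf{p}}(i)}
\]
depends only on the composition $\textbf{p}$ and on $I$, not on which specific positions hold $\X$. Hence summing over $\pi$ can be replaced by summing over $x\in\{0,1,\ldots,n-m\}$ and $\textbf{p}\in\mathcal{S}_x(\textbf{i})$, each term weighted by the multiplicity $\binom{n-m+1}{n-m-x}$, producing \eqref{eq:product formula for number of weakly inc mn pfs with fixed lucky set}.

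The main obstacle is purely bookkeeping: one must verify that the per-outcome count is genuinely a function of the pair $(\textbf{p}, I)$ alone, so that pulling the factor $\binom{n-m+1}{n-m-x}$ outside the inner sum is legitimate; and one must verify that the collection $\bigcup_{x=0}^{n-m}\mathcal{S}_x(\textbf{i})$, once expanded with its multiplicity of $\X$-placements, enumerates every element of $\out_{m,n}^{\uparrow}(I)$ exactly once. Both of these are precisely what \Cref{prop:description of all outcomes} delivers; no new combinatorics is needed beyond assembling the pieces in the correct order.
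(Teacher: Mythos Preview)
Your proposal is correct and mirrors the paper's proof essentially step for step: the first equality is obtained from \Cref{thm:count pfs with fixed lucky set - mn version} restricted to the outcome set $\out_{m,n}^{\uparrow}(I)$, and the second is obtained by regrouping outcomes according to their associated composition, invoking \Cref{lem:using comps for counting pfs with fixed outcome} for the per-outcome count and \Cref{prop:description of all outcomes} (specifically the multiplicity $\binom{n-m+1}{n-m-x}$ and the exhaustion of $\out_{m,n}^{\uparrow}(I)$) to collapse the sum. Your explicit emphasis that the per-outcome quantity depends only on $(\textbf{p},I)$, justifying the extraction of the binomial factor, is exactly the point the paper uses implicitly.
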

\begin{proof}
\Cref{eq:cor to theorem} follows directly from the definition of $\out_{m,n}^\uparrow(I)$ and Theorem \ref{eq:pf count with fixed lucky set - mn version}.
To establish \Cref{eq:product formula for number of weakly inc mn pfs with fixed lucky set} we make the following observations. 
Let $\textbf{p}$ be a good composition of $\textbf{i}$ in $\mathcal{S}_x(\textbf{i})$. 
By Statement (1) in \Cref{prop:description of all outcomes}, the number of parts of $\textbf{p}$ must be $x+1$ with $0\leq x\leq n-m$. 
By \Cref{lem:using comps for counting pfs with fixed outcome} every outcome $\pi$ with the same associated composition $\textbf{p}$ with $x+1$ parts results in the same number of weakly increasing $(m,n)$-parking functions with outcome $\pi$ and lucky set $I$. 
By \Cref{lem:using comps for counting pfs with fixed outcome}, this number is given~by 
    \[\displaystyle\frac{\prod_{i=1}^{x+1}(p_i-1)!}{\prod_{i\in I}d_{\textbf{p}}(i)}.\]
By Statement (2) in \Cref{prop:description of all outcomes}, 
the number of outcomes $\pi$ with the same associated composition $\textbf{p}$ with $x+1$ parts is given by 
$\binom{n-m+1}{n-m-x}$.
So for each good composition in $\mathcal{S}_{x}(\textbf{i})$ there are 
\begin{align}\label{eq:binom many}
    \binom{n-m+1}{n-m-x}
\end{align}
distinct outcome permutations $\pi$. 
Moreover, for each outcome $\pi$, there are
\begin{align}\label{eq:the needed product}
\displaystyle\frac{\prod_{i=1}^{x+1}(p_i-1)!}{\prod_{i\in I}d_{\textbf{p}}(i)}\end{align}
$(m,n)$-parking functions with outcome $\pi$ and lucky set $I$.
Hence, for a particular good composition the product of \Cref{eq:binom many} and \Cref{eq:the needed product} is
\begin{align}\label{eq:needs a sum}
\binom{n-m+1}{n-m-x}\displaystyle\frac{\prod_{i=1}^{x+1}(p_i-1)!}{\prod_{i\in I}d_{\textbf{p}}(i)}\end{align}
and counts the number of $(m,n)$-parking functions parking with outcome $\pi$ having associated composition $\textbf{p}$ and lucky set $I$.
The result then follows from taking \Cref{eq:needs a sum} and summing over all elements in $\mathcal{S}_x(\textbf{i})$ and over all possible values of $x$, which satisfies 
$0\leq x\leq n-m$. This count is given by
\[\sum_{x=0}^{n-m}\left[\sum_{\textbf{p}=(p_1,p_2,\ldots,p_{x+1})\in \mathcal{S}_{x}(\textbf{i})}\binom{n-m+1}{n-m-x}\left(\displaystyle\frac{\prod_{i=1}^{x+1}(p_i-1)!}{\prod_{i\in I}d_{\textbf{p}}(i)}\right)\right],\]
 and this completes the proof.
\end{proof}

We conclude this section with the following example.

\begin{example}[\Cref{ex:computing good comps} continued]
    Let $m=9$, $n=11$, and $I=\{1,4,5\}\subseteq[9]$. Hence $\textbf{i}=(3,1,5)$ and the good compositions of $\textbf{i}$ are $(9),(4,5),(3,6),(3,1,5)$.
    Next, we consider each good composition and determine what it contributes to \Cref{eq:product formula for number of weakly inc mn pfs with fixed lucky set}.
    \begin{itemize}[leftmargin=.2in]
        \item If $\textbf{p}=(9)$, then $x=0$ and $\textbf{s}=(0,9)$. Hence, $d_{\textbf{p}}(1)=1$, $d_\textbf{p}(4)=4-(0+1)=3$, and $d_\textbf{p}(5)=5-(0+1)=4$. Thus 
        \[\binom{n-m+1}{n-m-x}\left(\displaystyle\frac{\prod_{i=1}^{x+1}(p_i-1)!}{\prod_{i\in I}d_{\textbf{p}}(i)}\right)=\binom{3}{2}\left(\frac{8!}{1\cdot3\cdot4}\right)=3\cdot3360=10080.\]

        \item If $\textbf{p}=(4,5)$, then $x=1$ and $\textbf{s}=(0,4,9)$. Hence, $d_{\textbf{p}}(1)=1$, $d_\textbf{p}(4)=4-(0+1)=3$, and $d_\textbf{p}(5)=1$. Thus 
        \[\binom{n-m+1}{n-m-x}\left(\displaystyle\frac{\prod_{i=1}^{x+1}(p_i-1)!}{\prod_{i\in I}d_{\textbf{p}}(i)}\right)=\binom{3}{1}\left(\frac{3!4!}{1\cdot 3\cdot 1}\right)=6\cdot 24=144.\]

        \item If $\textbf{p}=(3,6)$, then $x=1$ and $\textbf{s}=(0,3,9)$. Hence, $d_{\textbf{p}}(1)=1$, $d_\textbf{p}(4)=1$, and $d_\textbf{p}(5)=5-(3+1)=1$. Thus 
        \[\binom{n-m+1}{n-m-x}\left(\displaystyle\frac{\prod_{i=1}^{x+1}(p_i-1)!}{\prod_{i\in I}d_{\textbf{p}}(i)}\right)=\binom{3}{1}\left(\frac{2!5!}{1\cdot 1\cdot 1}\right)=6\cdot 120=720.\]

        \item If $\textbf{p}=(3,1,5)$, then $x=2$ and  $\textbf{s}=(0,3,4,9)$. Hence, $d_{\textbf{p}}(1)=1$, $d_\textbf{p}(4)=1$, and $d_\textbf{p}(5)=1$. Thus
        \[\binom{n-m+1}{n-m-x}\left(\displaystyle\frac{\prod_{i=1}^{x+1}(p_i-1)!}{\prod_{i\in I}d_{\textbf{p}}(i)}\right)=\binom{3}{0}\left(\frac{2!0!4!}{1\cdot 1\cdot 1}\right)=48.\]
    \end{itemize}
    By \Cref{thm:count pfs with fixed lucky set - mn version}, the number of weakly increasing $(9,11)$-parking functions with lucky set $I=\{1,4,5\}$ is
\begin{align}\label{it works!}
    |\LPF_{9,11}^{\uparrow}(I)|=10080+144+720+48=10992.
\end{align}

Alternatively we can use the data from \Cref{ex:computing good comps}, where we established that 
\[\out_{m,n}^\uparrow(I)=\left\{
\begin{matrix}
\X\X123456789, \X123456789\X, 123456789\X\X,
     \X123\X456789, 123\X\X456789,\\
     123\X456789\X,     \X1234\X56789, 1234\X\X56789, 1234\X56789\X,     123\X4\X56789\end{matrix}\right\}.\]
For each $\pi\in \out_{m,n}^\uparrow(I)$, we compute the product in \Cref{eq:cor to theorem}:
\begin{itemize}[leftmargin=.2in]
    \item If $\pi=\X\X123456789,\X123456789\X $, or $123456789\X\X$, then 
    $\prod_{\pi_j\notin I\cup\{\X\}}\ell(\pi_j)=1\cdot 2\cdot 5\cdot 6\cdot 7\cdot 8=3360$.

    \item If $\pi=\X123\X456789, 123\X\X456789$, or
     $123\X456789\X$, then 
    $\prod_{\pi_j\notin I\cup\{\X\}}\ell(\pi_j)=
    1\cdot 2\cdot 2\cdot3 \cdot4 \cdot 5=240
$.
\item If $\pi=\X1234\X56789, 1234\X\X56789, 1234\X56789\X$, or $123\X4\X56789$, then 
    $\prod_{\pi_j\notin I\cup\{\X\}}\ell(\pi_j)=
    1\cdot 2\cdot 1\cdot2 \cdot3 \cdot4 =48
$.
\end{itemize}
By \Cref{eq:cor to theorem}, we have that 
\begin{align}\label{it works again!}
|\LPF_{9,11}^{\uparrow}(I)|=3(3360)+3(240)+4(48)=10992.\end{align}
The counts in \Cref{it works!} and \Cref{it works again!} agree.
\end{example}

\section{Future Work }\label{sec:future}  

Although we have a characterization of the outcomes of a parking function with a fixed lucky set (\Cref{thm:characterizing outcomes}), the enumeration of these permutations is quite sensitive to the location in which the lucky cars park and providing a general formula for the number of outcomes has proven difficult. Thus, it is an open problem to give an enumeration for the set $|\out_n(I)|$ for any lucky set $I$ of $\PF_n$. In addition one could consider the analogous question in the case where there are more parking spots than cars.

In forthcoming work, Beerbower, Elder, Harris, Martinez, and Shirley consider the case of unit interval parking functions, Fubini rankings, and unit Fubini rankings, providing a variety of results both on the number of lucky cars, and the number of such parking functions with a fixed set of lucky cars.
One could extend these studies to other specific subsets of parking functions to include interval parking functions and $\ell$-interval parking functions \cite{colaric2020interval, lintervalrational}, and even to generalizations such as parking sequences and assortments \cite{MR3593646, Chen_2023}. 

For additional open problems related to parking functions we point the interested reader to the article    by Carlson, Christensen, Harris, Jones, and Ramos Rodr\'iguez \cite{Choose}.

\section*{Acknowledgements}
We thank J.~Carlos Mart\'inez Mori, Melissa Beerbower, and Grant Shirley for initial conversations at the start of this project. P.~E.~Harris and L.~Martinez were supported in part by the National Science Foundation award DMS-2150434. L.~Martinez was also supported by the NSF Graduate Research Fellowship Program under Grant No. 2233066.

\bibliographystyle{plain}
\bibliography{bibliography.bib}
\end{document}